\theoremstyle{plain}
\newcommand{\otoprule}{\midrule[\heavyrulewidth]} %
\newcommand\blfootnote[1]{%
	\begingroup
	\renewcommand\thefootnote{}\footnote{#1}%
	\addtocounter{footnote}{-1}%
	\endgroup
}
\newtheorem{theorem}{Theorem}
\newtheorem{lemma}{Lemma}
\newtheorem{corollary}{Corollary}
\newtheorem{definition}{Definition}
\newtheorem{proposition}{Proposition}
\theoremstyle{nonumberplain}
\newtheorem{proof}{Proof}
\newcommand*{\email}[1]{%
	\normalsize #1 \par
}
\begin{document}

	\newcommand{\quod}{\hfill $\blacksquare$ \bigbreak}
	\newcommand{\reals}{I\!\!R}
	\newcommand{\np}{\mbox{{\sc NP}}}
	\newcommand{\sing}{\mbox{{\sc Sing}}}
	\newcommand{\con}{\mbox{{\sc Con}}}
	\newcommand{\prob}{\mbox{Prob}}
	\newcommand{\atm}{\mbox{{\sc ATM}}}
	\newcommand{\hopn}{\hop_{\cN}}
	\newcommand{\atmn}{\atm_{\cN}}
	\newcommand{\cF}{{\cal F}}
	\newcommand{\cM}{{\cal M}}
	\newcommand{\cN}{{\cal N}}
	\newcommand{\cP}{{\mathbb P}}
	\newcommand{\cD}{{\cal D}}
	\newcommand{\la}{{\langle}}
	\newcommand{\ra}{{\rangle}}
	\def\lalto{\left \lceil}
	\def\ralto{\right \rceil}
	\def\lbasso{\left \lfloor}
	\def\rbasso{\right \rfloor}
	\def\D{{\Delta}}
	\def\qed{\hfill$\Box$}

\title{On the Multi-Interval Ulam-R\'enyi Game: for $3$ lies $4$ intervals suffice}

\author{Ferdinando Cicalese and Massimiliano Rossi \\ \\ \small Department of Computer Science, University of Verona, Italy\\
	\email{ferdinando.cicalese@univr.it, massimiliano.rossi\_01@univr.it}}

\date{}

\maketitle		
		\begin{abstract}
			We study the problem of identifying an initially unknown $m$-bit number by using yes-no questions
			when up to a fixed number $e$ of the answers  can be erroneous.  
			In the variant we consider here questions are restricted to  
			be the union of up to a fixed number of intervals. 
			
			For any $e \geq 1$ let $k_e$ be the minimum $k$ such that 
			for all sufficiently large $m$,  
			there exists a strategy matching the information theoretic lower bound 
			and only using $k$-interval questions. It is known that $k_e = O(e^2)$ and 
			it has been conjectured that the $k_e = \Theta(e).$
			This  {\em linearity conjecture} is supported by the 
			known results for small values of $e$ as for $e\leq2$  
			we have $k_e = e.$ 
			We focus on the case $e=3$ and show $k_3 \leq 4$ 
			improving upon the previously known bound $k_3 \leq 10.$
			
		\end{abstract}

	\bibliographystyle{plain}
	\blfootnote{Part of the material of this paper was presented in preliminary form at ICTCS 2017 \cite{cicalesemulti}}
\section{Introduction}

In the {\bf Ulam-R\'enyi game with multi-interval questions,} two players, called Questioner and Responder---for reasons which 
will become immediately clear---fix three integer parameters: 
$m \geq 0,$ $e \geq 0$ and $k \geq 1.$ Then, Responder chooses a number $x$ from the set 
${\cal U} = \{0, 1, \dots, 2^m -1\},$ and keeps it secreto to Questioner.  
The task of Questioner is to identify the {\em secret} number $x$ chosen by Responder asking $k$-interval queries. 
These are 
$yes$-$no$ membership questions of the type ``Does $x$ belong to $Q$?'' where $Q$ 
is any subset of the search space 
{\em which can be expressed as the union of  $\leq k$ intervals}. We identify  a question with the subset $Q.$ Therefore, the
set of allowed questions is given by the family of sets: 
$${\cal T} = \left\{\bigcup_{i=1}^k \{a_{i}, a_{i}+1, \dots, b_{i}\} \mid 0\leq a_1 \leq b_1 \leq a_2 \leq b_2 \leq \cdots \leq a_k \leq b_k < 2^m\right\}.$$

Responder tries to  make Questioner's search as long as possible. To this aim, Responder can adversarially lie 
up to $e$ times during  game
i.e., by answering $yes$ to a question whose correct answer is $no$ or vice versa.

For any $m$ and $e$ let $N_{\min}(2^m, e) = \min\{q \mid 
2^{q-m} \geq \sum_{i=0}^e {q \choose i}\}.$ It is known that 
in a game over a search space of cardinality $n = 2^m$ and $e$ lies allowed to Responder, 
$N_{\min}(2^m, e)$ is a lower bound on the number of questions that  
Questioner has to ask in order to be sure to identify Responder's secret number 
 (see, e.g., \cite{Ber}).  
This lower bound holds in the  
version of the game in which questions  can refer to any subset, without the restriction 
to   $k$-interval queries. A fortiori, the lower bound holds for the multi-interval game for any value of $k$.
Strategies of size $N_{\min}(2^m,e),$ i.e., matching the information theoretic lower bound, are called {\em perfect}.
 
It is  known  that for any $e\geq 0$ and up to finitely many exceptional values of $m,$ Questioner can infallibly 
discover Responder's secret  number asking $N_{\min}(2^m,e)$ questions. 
However, in general such {\em perfect} strategies rely
 on the availability of arbitrary subset questions, i.e., any subset of the search space \cite{Spe}. 

When the cardinality of the search space is $2^m$ the description of an arbitrary subset query requires $2^m$ bits.
Moreover, in order to implement the known strategies using $N_{\min}(2^m, e)$ queries,   
$\Theta(e 2^m)$ bits are necessary to record the intermediate {\em states} of the game (see the next section for the details).
In contrast, a $k$-interval-query can be expressed by simply providing the boundaries of the $k$-intervals defining the 
question, hence reducing the space requirements of the strategy to only $2k \cdot m$ bits.

On the basis of these considerations, we will focus on the following problem:

\medskip

\noindent
{\bf Main question.} For given $e \geq 0$, denote by $k_e$ the smallest integer $k$ such that for all sufficiently large $m$ 
Questioner has a perfect strategy in the multi-interval game over the set of $m$-bit numbers only using 
$k$-interval queries. What is the value of $k_e$ for $e=1,2,\dots$ ?

\medskip

In \cite{mundici1997optimal} it was  proved that  for $e=2$ for any $m \geq 0$ (up to finitely many exceptions) there exists a searching 
strategy for Questioner of size $N_{\min}(m,e)$ (hence perfect) only using $2$-interval questions, and, conversely,  
perfect strategies which only use $1$-interval questions cannot generally exist. 
The case $e=1$ is analysed in \cite{Cicalese} where it is shown that perfect strategies exists for $e=1$ even when only 
using $1$-interval questions. However, simple comparison questions, namely
yes-no questions of the type ``Is $x \leq q$?'',  are not powerful enough to provide perfect strategies for the 
case $e=1$ \cite{Spe2,Aul}. 

These results show that for $e \leq 2,$ the answer to our main question is $k_e = e.$

In \cite{Cicalese} one of the authors proved that for any $e \geq 1$  there exists $k = O(e^2)$ such that for all sufficiently large $m$
Questioner can identify an $m$-bit number by using exactly $N(2^m,e)$ $k$-interval questions when 
Responder can lie at most $e$ times. 
In \cite{Cicalese} also conjectured that $k_e = O(e)$ interval might suffice for any $e$ and
all sufficiently large $m$. We will refer to this as the {\em linearity conjecture}.

\medskip

\noindent
{\bf Our Result.}
We focus on the case $e=3$. We first show that for any sufficiently large $m$, 
there exists a strategy  to identify an initially unknown 
$m$-bit number when up to $3$ answers are lies, which matches the information theoretic lower bound
and only uses $4$-interval queries. 
We then show how to refine our main tool to turn the above asymptotic result into a complete characterization of the 
instances of the Ulam-R\'enyi game with 4-interval question and $3$ lies that admit strategies using the theoretical minimum number of questions. 
For this, we build upon  the result of  \cite{negro1992ulam} and show that if there exists  a strategy for the {\em classical} Ulam-R\'enyi game with 
3 lies over a serach space that uses the information theoretic minimum number of questions, then the same strategy can be implemented 
using only $4$-interval questions. 

With reference to the {\em Main question} posed above, these results show  that $k_3 \leq 4,$ which significantly 
improves the best previously known bound of \cite{Cicalese} yielding 
$k_3 \leq  10.$ More interesting,  our novel analytic tool (Lemma \ref{lemma:well-shapeness}) naturally lends itself to a generalization 
to any fixed $e.$ The main open question is how to generalize  Theorem \ref{th:generalized-questions} in order to prove 
 the linearity conjecture $k_e = O(e).$ 

\medskip

\noindent
{\bf Related Work.}
The Ulam-R\'enyi game \cite{Ula,Ren2} has been extensively studied in various contexts including 
error correction codes \cite{ACDV,Ber,Cic-logic,deppe,Pel10}, 
learning \cite{Ces1,Learn,Or}, many-valued logics \cite{Mar,Cic-logic}, wireless networks \cite{Koz},  
psychophysics \cite{Kel-thesis,Kel}, and, principally, sorting and searching in the presence of errors 
(for the large literature on this topic, and the several variants studied, we refer the reader to the survey papers 
\cite{Mar,deppe,Pel0} and the book \cite{Cic-book}).

\section{Basic facts}

From now on we concentrate on the case $e = 3$ and the $4$-interval questions. 
Let $Q$  be the subset defining the question asked by Questioner.
Let $\overline{Q}$ be the complement of $Q,$ i.e., $\overline{Q} = \{0,1,\dots, 2^m-1\}\setminus Q.$

If Responder answers $yes$ to question $Q$, then we say that any number $y \in Q$ {\em satisfies} the answer
and any $y \in \overline{Q}$
{\em falsifies} the answer.

If Responder answers $no$ to question $Q$, then we say that any number $y \in \overline{Q}$ {\em satisfies} the answer
and any $y \in Q$
{\em falsifies} the answer.

At any stage of the game, we can partition the search space into $e+2$ subsets, 
$(A_0, A_1, A_2, A_{3}, A_{>3}),$ where $A_j$ ($j=0, \dots, 3$)
is the set of numbers falsifying exactly $j$ of Responder's answers, 
and therefore contains Responder's secret number if he has lied exactly $j$ times. 
Moreover, $A_{>3}$ is the set of numbers falsifying more than $e$ of the answers. 
Therefore, $A_{>3}$  is the set of numbers that cannot be the secret, because, otherwise, 
Responder would have lied too many times.
We refer to the vector $(A_0, A_1, A_2, A_3, A_{>3})$ as the {\em state} of the game, since it 
is a complete record of Questioner's state of knowledge 
on the numbers which are candidates to be Responder's 
secret number, 
as resulting from the sequence of questions/answers exchanged so far.

We will find it comfortable to have an alternative perspective on the state of the game. 
For a state $\sigma = (A_0, A_1, A_2, A_3, A_{>3})$ and for each $y \in U$
we define $\sigma(y)$ as the number of answers falsified by $y$, truncated at $4.$ 
Then for any $i = 0, \dots, 3,$ we have $\sigma(y) = i$ iff  $y \in A_i.$ 
For each $i = 0, 1, \dots, 3,$ we will also write 
$\sigma^{-1}(i)$ to denote the set $A_i.$ And define $\sigma^{-1}(4) = A_{>3} = \overline{\cup_{i=0}^3 A_i}$
to denote the set of numbers that as a result of the answers received cannot be Responder's 
secret number. From now on, we refer to a state of the game as the state $\sigma = (A_0,...,A_e)$, since the set $A_{>e}$ can be reconstructed from the sets $A_0, \ldots , A_e$ and the universe ${\cal U}$.

\begin{definition}[States and Supports]
A {\em state} is a map $\sigma : {\cal U} \rightarrow \{0,1, 2, 3, 4\}$.
The {\em type} of $\sigma$ is the quadruple $\tau(\sigma) = (t_0, t_1, t_2, t_3)$ where 
$t_i = |\sigma^{-1}(i)|$ for each $i=0,1,2,3.$ 
The {\em support} $\Sigma$ of $\sigma$ is the set of all $y \in {\cal U}$ such that $\sigma (y) < 4.$
A state is {\em final} iff its support has cardinality at most one.
The {\em initial state} $\alpha$ is the function mapping each element of ${\cal U}$ to $0.$
\end{definition}

Let $(A_0, \dots, A_e)$ be the current state and $Q$ be the new question asked by Questioner.
Questioner's new 
state of knowledge if Responder answers $yes$ to $Q$ is obtained by the following rules:
$$A_0 \leftarrow A_0 \cap Q \quad \mbox{and} \quad 
A_j \leftarrow (A_j \cap Q) \cup (A_{j-1} \cap \overline{Q}), \, \mbox{for } j = 1, \dots, e
$$

If Responder answers $no$, the above definitions and rules apply with $Q$ replaced by its complement 
$\overline{Q} = \{0,1,\dots, 2^m-1\}\setminus Q,$
i.e., answering $no$ is the same as answering $yes$ to the complementary question $\overline{Q}$.

With the above functional notation we formalise this as follows:

\begin{definition}[Answers and Resulting States]
Let $\sigma$ be the current state  with support $\Sigma$ and $Q$ be the new question asked by Questioner. Let $b \in \{yes, no\}$ be 
the answer of Responder. 
Define the answer function $b: \Sigma \rightarrow \{0, 1\}$ associated to question $Q$ 
by stipulating that $b(y) = 0$ if and only if $y$ satisfies answer $b$ to question $Q$. 
Then, the resulting state $\sigma_b$ is given by 
$\sigma_b(y) = \min\{\sigma(y) + b(y), 4\}.$

More generally, starting from state $\sigma$ after questions $Q_1, \dots, Q_t$ with answers $b_1,\dots,b_t$ the 
resulting state is $\displaystyle{\sigma_{b_1\,b_2\, \cdots\, b_t} = \min\{\sigma(y) + \sum_{j=1}^t b_j(y), 4\}.}$
\end{definition} 

In particular, for $\sigma$ being the initial state we have that the resulting state after $t$ questions is the truncated sum of the 
corresponding answer functions associated to Responder's answers.

\begin{definition}[Strategy]
A strategy of size $q$ is a complete binary tree of depth $q$ where each internal node $\nu$ maps to a question $Q_{\nu}.$ The left and right
branch stemming out of $\nu$ map to the  function answers $yes$ and $no$ associated to question $Q_{\nu}.$ 
Each leaf $\ell$ is associated to the state $\sigma^{\ell}$ resulting 
from the sequence of questions and answers  associated to the nodes and branches on unique path from the root to $\ell$. In particular, if 
$b_1, \dots, b_q$ are the answers/branches leading to $\ell$ then we have $\sigma^{\ell} = \alpha_{b_1\, \cdots\, b_q}$ as defined above.

The strategy is {\em winning} iff for all leaves $\ell$ we have that $\sigma^{\ell}$ is a final state.
\end{definition}

We can also extend the above definition to an arbitrary starting state. Given a state $\sigma$, we say that a strategy ${\cal S}$ of size $q$ is winning for $\sigma$ 
if for any root to leaf path in ${\cal S}$ with associated answers $b_1, \dots, b_q$ the state $\sigma_{b_1\, \cdots\, b_q}$ is final .

We define the {\em character} of a state $\sigma$ 
as  $ch(\sigma)= \min\{q \mid w_q(\sigma) \leq 2^q\},$ where 
$w_q(\sigma)  = \sum_{j=0}^3 |\sigma^{-1}(j)| \sum_{\ell = 0}^{3-j} {q \choose \ell}$
is referred to as the {\em $q$th volume} of  $\sigma.$

We have that the lower bound  $N_{\min}(2^m,e),$ mentioned in the introduction, coincides with the 
character of the initial state $\sigma^{0} = \alpha =  ({\cal U}, 0, \dots, 0)$ 
(see  Proposition \ref{proposition:volume_properties} below). 
Notice also that a state  has character $0$ if and only if it is a final state.

For a state $\sigma$ and a question $Q$ let 
$\sigma_{yes}$ and $\sigma_{no}$ be the resulting states according to whether 
Responder answers, respectively, yes or no, to question $Q$
in state $\sigma$.
Then, from the definition of the $q$th volume of a state, it follows that
for each $q \geq 1,$ we have $w_{q}(\sigma) = w_{q-1}(\sigma_{yes}) + w_{q-1}(\sigma_{no}).$
A simple induction argument gives the following lower bound \cite{Ber}. 
 
\begin{proposition} \label{proposition:volume_properties}Let $\sigma$ be the state of the game.  
For any integers $0 \leq q< ch(\sigma)$ and $k\geq 1,$ starting from 
state $\sigma,$ Questioner cannot determine Responder's secret number asking only $q$ 
many $k$-interval-queries.
\end{proposition}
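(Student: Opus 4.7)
The plan is to prove the proposition by induction on $q$, exploiting the volume recurrence $w_{q}(\sigma) = w_{q-1}(\sigma_{yes}) + w_{q-1}(\sigma_{no})$ that is already flagged in the paragraph preceding the statement. A crucial observation is that this recurrence, and hence the whole argument, makes no use of $Q$ being a $k$-interval set: it holds for \emph{any} subset question $Q$. Consequently the lower bound obtained is automatically valid for the restricted class of $k$-interval queries, for every $k\geq 1$.

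\textbf{Base case ($q=0$).} I would first observe that $w_0(\sigma)=\sum_{j=0}^{3}|\sigma^{-1}(j)|\sum_{\ell=0}^{3-j}\binom{0}{\ell}=|\Sigma|$, the cardinality of the support. The hypothesis $0<ch(\sigma)$ means $w_0(\sigma)>2^0=1$, hence $\sigma$ is not a final state. Since with $q=0$ questions Questioner cannot ask anything and cannot narrow down the support, he cannot in general identify Responder's secret.

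\textbf{Inductive step.} Assuming the claim for $q-1$, suppose $q<ch(\sigma)$, so that $w_q(\sigma)>2^q$. For any question $Q$ (in particular any $k$-interval question) let $\sigma_{yes}$ and $\sigma_{no}$ be the two possible resulting states. The recurrence gives
\[
w_{q-1}(\sigma_{yes})+w_{q-1}(\sigma_{no}) \;=\; w_q(\sigma) \;>\; 2\cdot 2^{q-1},
\]
so at least one of the two summands exceeds $2^{q-1}$. The adversarial Responder selects the answer $b\in\{yes,no\}$ for which $w_{q-1}(\sigma_b)>2^{q-1}$, i.e., $q-1<ch(\sigma_b)$. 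By the inductive hypothesis, Questioner cannot win in $\sigma_b$ with only $q-1$ remaining $k$-interval questions. Since $Q$ was arbitrary, no strategy of size $q$ can force a final state against this Responder, proving the claim.

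\textbf{Main obstacle.} There is no serious obstacle: the argument is the classical Berlekamp-style volume/adversary combination. The only piece that deserves a careful verification is the volume recurrence itself, which follows from Pascal's identity $\binom{q}{\ell}=\binom{q-1}{\ell}+\binom{q-1}{\ell-1}$ applied level by level; namely, an element $y$ at level $j\leq 3$ contributes $\sum_{\ell=0}^{3-j}\binom{q-1}{\ell}$ to whichever of $\sigma_{yes},\sigma_{no}$ keeps it at level $j$ and $\sum_{\ell=0}^{2-j}\binom{q-1}{\ell}$ to the other one, and these two quantities sum to $\sum_{\ell=0}^{3-j}\binom{q}{\ell}$, which is exactly the contribution of $y$ to $w_q(\sigma)$. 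I would either derive this identity explicitly in the proof or simply invoke it, since it has already been asserted in the paragraph immediately preceding the proposition.
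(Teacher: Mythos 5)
Your proof is correct and is precisely the ``simple induction argument'' the paper itself invokes (citing Berlekamp) without writing it out: the volume recurrence $w_q(\sigma)=w_{q-1}(\sigma_{yes})+w_{q-1}(\sigma_{no})$ plus an adversarial choice of the heavier branch, with the observation that the bound is oblivious to the $k$-interval restriction. The only step worth a half-line of justification is the passage from $w_{q-1}(\sigma_b)>2^{q-1}$ to $q-1<ch(\sigma_b)$, which uses the (easy, Pascal-identity) fact that $w_{q}(\cdot)\leq 2\,w_{q-1}(\cdot)$, so the condition $w_q\leq 2^q$ is upward closed in $q$.
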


In order to finish the search within $N_{\min}(2^m, e)$ queries, Questioner has to
guarantee that each question asked induces a strict decrease of the character
of the state of the game. The following 
lemma provides a sufficient condition for obtaining such a strict decrease of the character.

\begin{lemma} \label{lemma:balance}
Let $\sigma$ be the current state, with $q = ch(\sigma).$ Let $D$ be Questioner's question and 
$\sigma_{yes}$ and $\sigma_{no}$ be the resulting states according to whether 
Responder answers, respectively, yes or no, to question $D.$

If $|w_{q-1}(\sigma_{yes}) - w_{q-1}(\sigma_{no})| \leq 1$ then it holds that 
$ch(\sigma_{yes}) \leq q-1$ and $ch(\sigma_{no}) \leq q-1.$
\end{lemma}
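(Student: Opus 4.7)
The plan is to combine the recursive identity $w_q(\sigma)=w_{q-1}(\sigma_{yes})+w_{q-1}(\sigma_{no})$ (stated just before Proposition \ref{proposition:volume_properties}) with the definition of the character, and then exploit the balance hypothesis to bound each term individually.

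First I would note that since $q=ch(\sigma)$, the definition of the character gives $w_q(\sigma)\leq 2^q$. Substituting the decomposition identity yields
\[
w_{q-1}(\sigma_{yes})+w_{q-1}(\sigma_{no})\;\leq\;2^q\;=\;2\cdot 2^{q-1}.
\]
Next I would use the balance hypothesis $|w_{q-1}(\sigma_{yes})-w_{q-1}(\sigma_{no})|\leq 1$. Writing $a=w_{q-1}(\sigma_{yes})$ and $b=w_{q-1}(\sigma_{no})$ and supposing without loss of generality that $a\geq b$, we get $2a\leq (a+b)+1\leq 2^q+1$. Since both $a$ and $2^{q-1}$ are integers and $2^q$ is even (the case $q=0$ being vacuous, as the hypothesis $q=ch(\sigma)$ together with a non-final $\sigma$ forces $q\geq 1$), this forces $a\leq 2^{q-1}$, and therefore $b\leq a\leq 2^{q-1}$ as well.

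Finally, by the very definition of character, $w_{q-1}(\sigma_{yes})\leq 2^{q-1}$ gives $ch(\sigma_{yes})\leq q-1$, and symmetrically $ch(\sigma_{no})\leq q-1$, which is the desired conclusion. There is essentially no obstacle here: the whole argument rests on a two-line arithmetic manipulation, and the only subtlety is handling the parity/rounding step to convert $a\leq (2^q+1)/2$ into the clean bound $a\leq 2^{q-1}$, which works precisely because $q\geq 1$.
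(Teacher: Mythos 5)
Your argument is essentially identical to the paper's: both use the volume decomposition $w_q(\sigma)=w_{q-1}(\sigma_{yes})+w_{q-1}(\sigma_{no})$, the balance hypothesis to bound the larger of the two $(q-1)$th volumes by $2^{q-1}+1/2$, and then integrality to drop to $2^{q-1}$. Your extra remark about the $q=0$ case and the evenness of $2^q$ is harmless but not needed (the paper just invokes integrality of the volume against the integer bound $2^{q-1}$).
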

\begin{proof}
Assume, w.l.o.g., that $w_{q-1}(\sigma_{yes}) \geq w_{q-1}(\sigma_{no}).$ 
Then, from the hypothesis, it follows that $w_{q-1}(\sigma_{no}) \geq w_{q-1}(\sigma_{yes})-1.$
By definition of character we have 
$\displaystyle{2^q \geq w_q(\sigma) = w_{q-1}(\sigma_{yes}) + w_{q-1}(\sigma_{no}) \geq 2w_{q-1}(\sigma_{yes}) - 1,}$
hence, $w_{q-1}(\sigma_{no}) \leq w_{q-1}(\sigma_{yes}) \leq 2^{q-1}+1/2,$ which 
together with the integrality of the volume, implies that for both $\sigma_{yes}$ and $\sigma_{no}$
the $(q-1)$th volume is not larger than $2^{q-1},$ hence their character is not larger than $q-1,$ as
desired.
\qed
\end{proof}

A question which satisfies the hypothesis of Lemma \ref{lemma:balance} will be called {\em balanced}.
A special case of balanced question is obtained when for a state $\sigma = (A_0, \dots, A_e)$ the question 
$D$ is such that $|D \cap A_i| = |A_i|/2$ for each $i=0, \dots, e.$ In this case, we also call the question 
an {\em even splitting}.

\begin{definition}[Interval]
An interval in $\mathcal{U}$ is either the empty set $\emptyset$ or a set of consecutive elements  $[a,b] = \{x\in\mathcal{U} | a \leq x \leq b\}.$ 
The elements $a,b$ are called the boundaries of the interval.
\end{definition}

\begin{definition}[4-interval-question]
A $4$-interval question (or simply a question) is any subset $Q$ of $\mathcal{U}$ 
such that  $Q = I_1 \cup I_2 \cup I_3 \cup I_4$ where for $j = 1,2,3,4,$ $I_j$ is an interval 
in $\mathcal{U}$.

The {\em type of $Q$}, denoted by $|Q|$, is the quadruple $|Q| = [a_0^{Q},a_1^{Q},a_2^{Q},a_3^{Q}]$ where
for each $i = 0,1,2,3$ , $a_i^{Q} = |Q \cap \sigma^{-1}(i)|.$
\end{definition}

Following \cite{mundici1997optimal} we visualize the search space as a necklace and restrict it to the set of
number which are candidate to be the secret number, i.e., we identify $\cal U$ with its support $\Sigma = {\cal U} \cap \bigcup_{i=0}^3 A_i.$

For any non-final state, i.e., $|\bigcup_{i=0}^3 A_i| > 1$, for each $x \in  \bigcup_{i=0}^3 A_i$  
we define the successor of $x$ to be  the number $x+r \mod 2^m$ for 
the smallest $0 < r < 2^m$ such that $x+ r \mod 2^m \in \bigcup_{i=0}^3 A_i.$ In particular, for the initial state, $0$ is the 
successor of $2^m-1.$

For $a, b \in \bigcup_{i=0}^3 A_i$ we say that there is an arc from $a$ to $b$ and denote it by $\langle a , b \rangle$
if the following two conditions hold: (i)  $\sigma(x) = \sigma(a)$  for each element $x$ encountered when moving from 
 $a$ to $b$ in ${\cal U}$ passing from one element to its successor; (ii) $\sigma(c) \neq \sigma(a)$ and 
$\sigma(d) \neq \sigma(b)$ where $a$ is the successor of $c$ and $d$ is the successor of $b$. 
We say that arc $\langle a , b \rangle$ is on level $\sigma(a)$ and call $a$ and $b$ the left and right boundary of the arc.

In words, an arc is a maximal sequence of consecutive elements lying on the same level of the state $\sigma$.

For the sake of definiteness, we allow an arc to be empty. Therefore, we can associate to a state $\sigma$ 
a smallest sequence ${\cal L}^{\sigma}$ of
(possibly empty) arcs $a_0, \dots, a_{r-1}$ such that for each $i$ the levels of arcs $a_i$ and $a_{i+1}$ differ exactly by $1$. Note that, 
by requiring that the length $r$ be minimum the sequence ${\cal L}^{\sigma}$ is uniquely determined up to circular permutation. 

For each $i = 0, 1, \dots, r,$ we say that arcs $a_i$ and $a_{(i+1) \bmod r}$ are {\em adjacent} (or {\em neighbours}). 
We say that $a_i$ is a {\em saddle} if both adjacent arcs are 
on a lower level, i.e., $a_{(i-1) \bmod r}, a_{(i+1) \bmod r} \in \sigma^{-1}(k-1)$ and $a_i \in \sigma^{-1}(k)$ for some $k$

We say that $a_i$ is a {\em mode} if both adjacent arcs are 
on a higher level, i.e., $a_{(i-1) \bmod r}, a_{(i+1) \bmod r} \in \sigma^{-1}(k+1)$ and $a_i \in \sigma^{-1}(k)$ for some $k$

We say that $a_i$ is a {\em step} if for some $k$ $a_i \in \sigma^{-1}(k)$ and either $a_{(i-1) \bmod r} \in \sigma^{-1}(k-1), a_{(i+1) \bmod r} \in \sigma^{-1}(k+1)$ or $a_{(i-1) \bmod r} \in \sigma^{-1}(k+1), a_{(i+1) \bmod r} \in \sigma^{-1}(k-1)$.

Based on the above notions, we now define a well-shaped state for $e$ lies.

\begin{definition}\label{def:well-shaped}%
Let $\sigma$ be a state and $\mathcal{L}^{\sigma}$ be its associated list of arcs. Then, 
$\sigma$ is \textbf{well shaped} iff the following conditions hold:
\begin{itemize}
	\item for  $i=0, \dots, e-1$, in ${\cal L}^{\sigma}$ there are exactly $(2i+1)$ arcs lying on level $i$. 
	\item  in ${\cal L}^{\sigma}$ there are exactly $e$ arcs lying on level $e$.
\end{itemize}

\end{definition}

It is not hard to see that for the case $e=3$ under investigation, the only two feasible well-shaped states 
are described as follow:  $\sigma^{-1}(0)$ is an arc $S$ in $\Sigma$; 
$\sigma^{-1}(1)$ is the disjoint union of three arcs $H$,$N$ and $O$ in $\Sigma$ with $N$ and $O$ adjacent to $S$;
$\sigma^{-1}(2)$ is the disjoint union of five arcs  $A,B,C,L$ and $M$ in $\Sigma$ with $B$ and $C$ adjacent to $H$, $L$ adjacent to $N$; $\sigma^{-1}(3)$ is the disjoint union of three arcs $P,Q,R$ in $\Sigma$ with $R$ and $P$ adjacent to $A$. 

Starting with $S$ and scanning $\mathcal{U}$ with positive orientation, we can list the twelve arcs (restricted to $\Sigma$) in one of the following two possibilities:
\begin{eqnarray} \label{eq:well shaped}
 \sigma_1 &=& L^{2} N^{1} S^{0} O^{1} M^{2} Q^{3} B^{2} H^{1} C^{2} R^{3} A^{2} P^{3} \label{eq:well shaped}\\
 \sigma_2 &=& L^{2} N^{1} S^{0} O^{1} B^{2} H^{1} C^{2} Q^{3} M^{2} R^{3} A^{2} P^{3} \label{eq:well shaped2}
\end{eqnarray}

where for an arc $X$ the notation $X^i$ is meant to denote the fact that $X \subseteq \sigma^{-1}(i).$

Typical well-shaped states of type (\ref{eq:well shaped})  and (\ref{eq:well shaped2}) are shown in Figures \ref{fig.well-shaped} and \ref{fig.well-shaped2} respectively.

\begin{figure*}[h]
	\centering \includegraphics[width=\textwidth]{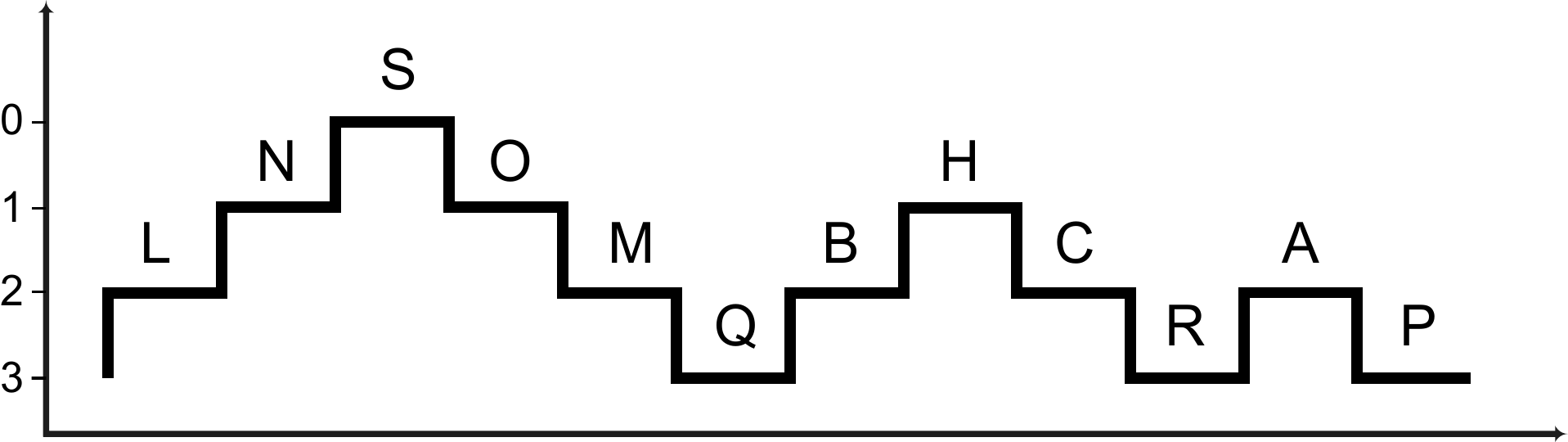}
	\caption{A well-shaped state of type (\ref{eq:well shaped}). Arcs $S, H, A$ are {\em modes} at level $0,1,2,$ respectively.
		Arcs $Q, R, P$ are {\em saddles} at level $3$. The remaining arcs are {\em steps}. The arrow shows the positive orientation.}
	\label{fig.well-shaped}
\end{figure*}

\begin{figure*}[h]
	\centering \includegraphics[width=\textwidth]{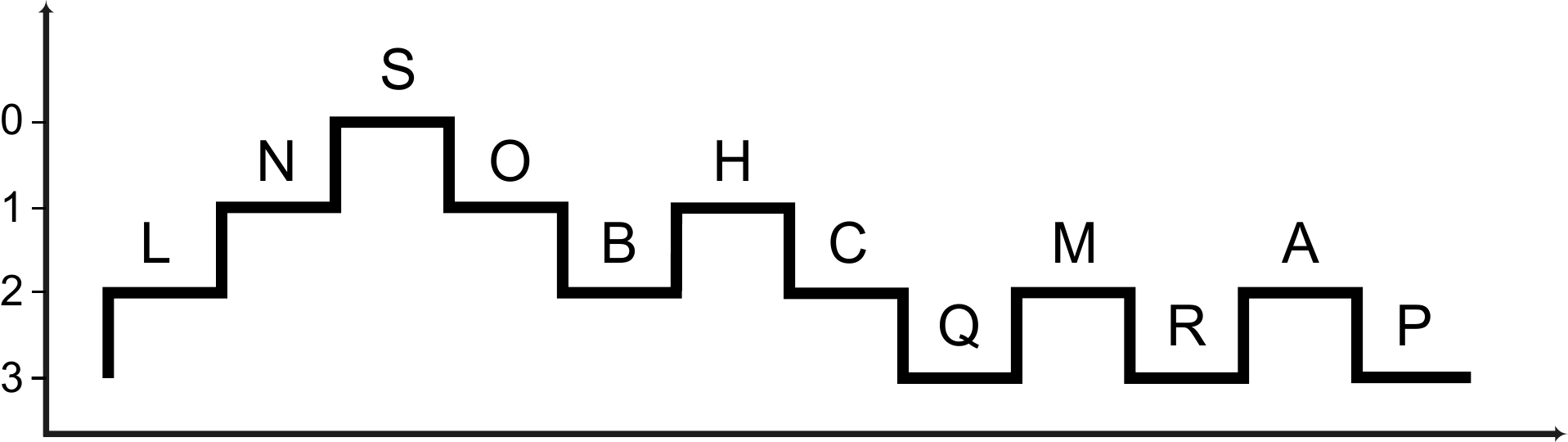}
	\caption{A well-shaped state of type (\ref{eq:well shaped2}). Arcs $S, H, M, A$ are {\em modes}.
		Arcs $B, Q, R, P$ are {\em saddles}. The remaining arcs are {\em steps}. The arrow shows the positive orientation.}
	\label{fig.well-shaped2}
\end{figure*}

\section{The key result - structure of perfect 4-interval strategies for 3 lies}

The strategy we propose is based on the approach of Spencer \cite{Spe}. 
We are going to recall the strategy presented in  \cite{Spe} and prove how to 
implement questions in that strategy by means of $4$-interval questions. The main technical tool will be to 
show that we can define $4$-interval balanced questions and also guarantee that each intermediate state is well-shaped.

In this section, we will characterise questions in terms of the ratio between the components of the question and the components of 
the state they are applied. We will show conditions for the existence of questions that  
can be implemented using only 4-intervals and such that the resulting states are {\em well-shaped} whenever the state they are applied to is well shaped. 

For each of them, we show how to select the exact amount of elements in the query among the arcs representing the state. Then, we prove that no other cases are allowed and finally we show that the well shapeness property of the state is preserved in both states resulting from the answer to the query.

Our main technical tool is the following theorem, whose proof is deferred to the next section.

\begin{theorem} \label{th:generalized-questions}
Let $\sigma$ be a well-shaped state of type $\tau (\sigma) = (a_0, b_0,c_0,d_0) $.
For all integers
$\displaystyle{0 \leq a \leq a_0 \quad  0 \leq  b \leq \lceil \frac{1}{2} b_0 \rceil 
\quad 0 \leq  c \leq \lceil \frac{1}{2} c_0 \rceil  
\quad 0 \leq  d \leq \lceil \frac{2}{3} d_0 \rceil}
$
there exists a $4$-interval question $Q$ of type $|Q| = [a,b,c,d]$  that we can ask in state $\sigma$ and such 
that both the resulting "yes" and "no" states are \emph{well-shaped}.
\end{theorem}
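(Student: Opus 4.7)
The plan is to prove Theorem \ref{th:generalized-questions} by an explicit construction on the cyclic arc structure of a well-shaped state. Fix $\sigma$ of one of the two types $\sigma_1, \sigma_2$, so its necklace description has exactly $1+3+5+3=12$ arcs on levels $0,1,2,3$ respectively. Since each arc is a maximal run of consecutive elements of the support, choosing a $4$-interval question amounts to selecting, for every arc $X$, a (possibly empty or full) sub-interval of $X$, subject to the global constraint that the resulting union of these sub-intervals forms at most $4$ maximal intervals on the necklace. The type constraint then translates arc-by-arc: take $a$ elements from the single level-$0$ arc, a total of $b$ from the three level-$1$ arcs, a total of $c$ from the five level-$2$ arcs, and a total of $d$ from the three level-$3$ arcs.

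The first step is to interpret the upper bounds $\lceil b_0/2\rceil$, $\lceil c_0/2\rceil$ and $\lceil 2d_0/3\rceil$ as expressions of the four-interval budget. A pigeonhole argument on the three arcs at level $1$ (and at level $3$) shows that if $b$ exceeds $\lceil b_0/2\rceil$, then at least one level-$1$ arc must be split, burning an interval boundary; the listed bounds are precisely those that let us always include a suitable selection of whole arcs together with a bounded number of partial arcs while keeping the total at $\leq 4$ intervals. I would then organise the construction as a case analysis on which arcs of each level are fully taken, partially taken, or omitted, guided by the relative magnitudes of $a,b,c,d$ with respect to the sizes of the individual arcs $S, N, O, H, L, M, A, B, C, P, Q, R$.

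The second and more delicate step is to show that both resulting states are well-shaped, i.e.\ display again a $1$-$3$-$5$-$3$ arc pattern of type $\sigma_1$ or $\sigma_2$. The key local observation is that a partial cut inside an arc at level $i$ contributes at most one new arc at level $i$ and one new arc at level $i+1$ in the yes-state, while a cut placed adjacent to a neighbouring arc of the correct level simply merges with it and does not raise the arc count. The main obstacle will be to choose, for each tuple $(a,b,c,d)$ and for each starting type, the precise placement of the cuts so that the arc counts at every level balance exactly to $1,3,5,3$ in \emph{both} answer branches. Extremal choices such as $d$ close to $\lceil 2d_0/3\rceil$ look especially demanding, since they force one level-$3$ arc to be (nearly) fully included and another to be split, and this must be done compatibly with the simultaneous constraints at levels $1$ and $2$ and with the four-interval limit; I expect most of the work to go into verifying that a uniform template of cuts exists for every combination of parameters.
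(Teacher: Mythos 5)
Your proposal sets up the problem the same way the paper does (select sub-intervals arc by arc on the $12$-arc necklace, interpret the bounds $\lceil b_0/2\rceil,\lceil c_0/2\rceil,\lceil 2d_0/3\rceil$ via a pigeonhole on the arcs of each level, and then track how each cut changes the arc counts locally), but it stops exactly where the actual proof begins. The two statements you defer --- ``a uniform template of cuts exists for every combination of parameters'' and ``the arc counts balance to $1,3,5,3$ in both answer branches'' --- are not routine verifications; together they \emph{are} the theorem. Nothing in the proposal specifies where the cuts go, why four intervals always suffice, or why the no-state (not just the yes-state) stays well shaped, and your local observation is stated only for the yes-branch, whereas the delicate point is that a single placement must balance both branches simultaneously.

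Concretely, the paper resolves this with two ingredients you would still have to supply. First, an explicit anchoring rule: at each level the cut is taken inside the mode ($S$, $H$, $A$) starting from the boundary $s^+,h^+,a^+$ it shares with the larger lower-level neighbour $A^{(1)},A^{(2)},A^{(3)}$, and when the mode is exhausted the selection spills over that shared boundary, so that pieces chosen at consecutive levels merge into one interval; a set $\mathcal{E}(i)$ of available boundaries is carried along to certify that the total never exceeds four intervals, including the overflow cases needing a third arc $E$ at level $2$ (where one must check $|A|+|A^{(2)}|+|E|\geq\lceil c_0/2\rceil\geq c$) and the arcs $A^{(3)},W$ at level $3$ (where $|A^{(3)}|+|W|\geq\frac{2}{3}d_0\geq d$). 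Second, a compensation lemma (Lemma \ref{lemma:well-shapeness}, built on Proposition \ref{prop:question_effect}): a split is harmless only in specific combinations --- a mode-split alone; an upward step-split paired with a fully covered mode; a downward step-split paired with a fully uncovered mode; a saddle-split paired with both; and any covered saddle compensated by an additional covered mode --- and one must check that every case of the anchored construction lands in one of these configurations. Your proposal acknowledges these difficulties but does not carry out either step, so as it stands it is a plan with the central construction and the both-branches bookkeeping missing.
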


Before going into the details of the proof, we now show how to employ Theorem  \ref{th:generalized-questions} 
to show that asymptotically, every instance of the Ulam-R\'enyi game with $3$ lies over a space of cardinality $2^m$
admits a perfect startegy that only uses $4$-interval questions. 
For this, we use the main result of  \cite{Spe} that rephrase in our setting $e=3$ guarantees that for all sufficiently large $m$,
the strategy using the following two steps is perfect for the Ulam-R\'enyi game with $3$ lies over the search space of size $2^m$: 

\begin{enumerate}
\item  As long as the state satisfies $\sum_{i=0}^2 |\sigma^{-1}(i)| > 1$ ask a question $Q$ of 
type $[a_0^Q, a_1^Q, a_2^Q, a_3^Q]$ where, for $i = 0,1,2,$ 
$a_i^Q \in \left\{\lfloor \frac{|\sigma^{-1}(i)|}{2} \rfloor, \lceil \frac{|\sigma^{-1}(i)|}{2} \rceil \right\}$ with the choice of whether to 
choose floor or ceiling alternating among those levels where $|\sigma^{-1}(i)|$ is odd. The value of $a_3^Q$ is appropriately computed, 
based on the choices of $a_0^Q, a_1^Q, a_2^Q$, in order to guarantee that the resulting question is balanced, i.e., 
$a_3^Q = \lfloor \frac{1}{2} (\sum_{j=0}^{2} (|\sigma^{-1}(j)|-2a^Q_j){{q} \choose {j}} + \sigma^{-1}(3) )\rfloor$, where $q+1$ is the 
character of $\sigma$;
\item when the state satisfies  $\sum_{i=0}^2 |\sigma^{-1}(i)| \leq 1$, ask a balanced question $Q$ of type $[0,0,0, a_3^Q]$. \end{enumerate}

The main point in the argument of \cite{Spe} is that, up to finitely many exceptions, for all $m = \log |{\cal U}|$, the value 
$a_3^Q$ defined in 1.\ is feasible, in the sense that using the above rules yields $0 \leq a_3^Q \leq |\sigma^{(-1)}(3)|$.

We can now employ Theorem \ref{th:generalized-questions} to show that the above step can be 
implemented by a 4-intervals-question.
Let $Q$ be the question defined in $1.$ Let $\overline{Q}$ be the complementary question, and 
$[a_0^{\overline{Q}},  a_1^{\overline{Q}}, a_2^{\overline{Q}}, a_3^{\overline{Q}}]$ denote its type.
For $i=0,1,2,$ we have $a_i^{Q}, a_i^{\overline{Q}} \leq \lceil \frac{|\sigma^{-1}(i)|}{2} \rceil.$ Moreover,
we have $\min \{ a_3^{Q}, a_3^{\overline{Q}} \} \leq \lceil \frac{2}{3} |\sigma^{-1}(3)| \rceil.$
Therefore, asking $Q$ or  $\overline{Q}$ according to whether $a_3^{Q} \leq  a_3^{\overline{Q}}$
guarantees that the question satisfies the hypothesis of Theorem \ref{th:generalized-questions} 
and then it can be implemented as $4$-interval question which also preserves the well-shape of the state. 

\smallskip

The condition in 2. can also be easily guaranteed only relying on $4$-interval questions. 
In fact, the following proposition 
 shows that questions in point 2.\ are implementable by $4$-interval questions, preserving the 
well-shape of the state.

\begin{proposition} \label{proposition:end_game}
Let $\sigma$ be a well-shaped state with $\sigma^{-1}(3)>0$ and $\sum_{i=0}^{2} |\sigma^{-1}(i)| \leq 1.$
Let $ch(\sigma) = q.$ Then, starting in state $\sigma$ the Questioner can discover the Respounder's secret number asking exactly $q$ many $1$-interval-queries. 
\end{proposition}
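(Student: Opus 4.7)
The plan is to proceed by strong induction on $q = ch(\sigma)$.  The base case $q = 0$ follows from $w_0(\sigma) = |\Sigma| \le 1$, so the state is already final and no question is needed.  For the inductive step with $q \ge 1$, the hypothesis $\sum_{i=0}^{2}|\sigma^{-1}(i)| \le 1$ naturally splits into case~(a), where the whole support lies at level~$3$, and case~(b), where a single ``exception'' element $x^*$ is present at some level $i \in \{0,1,2\}$ alongside the level-$3$ bulk.

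In case~(a), set $n = |\sigma^{-1}(3)| \ge 2$ (otherwise $q = 0$) and list these elements in increasing order in $\mathcal{U}$ as $x_1 < x_2 < \cdots < x_n$.  I would ask the $1$-interval question $Q = [x_1, x_{\lceil n/2 \rceil}]$, of type $[0,0,0,\lceil n/2 \rceil]$.  The resulting volumes are $w_{q-1}(\sigma_{yes}) = \lceil n/2 \rceil$ and $w_{q-1}(\sigma_{no}) = \lfloor n/2 \rfloor$, so Lemma~\ref{lemma:balance} yields $ch(\sigma_{yes}),\, ch(\sigma_{no}) \le q-1$; both successors still have every surviving candidate at level~$3$ and the induction hypothesis applies.

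For case~(b), the Pascal-style identity $\sum_{\ell=0}^{3-i}\binom{q}{\ell} = \sum_{\ell=0}^{3-i}\binom{q-1}{\ell} + \sum_{\ell=0}^{2-i}\binom{q-1}{\ell}$, together with $w_q(\sigma) = n + \sum_{\ell=0}^{3-i}\binom{q}{\ell} \le 2^q$, shows that the integer interval
\[
\Bigl[\,n + \textstyle\sum_{\ell=0}^{2-i}\binom{q-1}{\ell} - 2^{q-1}\;,\;\; 2^{q-1} - \sum_{\ell=0}^{3-i}\binom{q-1}{\ell}\,\Bigr] \cap [0,n]
\]
is non-empty.  For any $a_3^Q$ in this intersection, since the level-$3$ elements are totally ordered on each side of $x^*$ in $\mathcal{U}$, I can pick a $1$-interval $Q = [a,b]$ with $a \le x^* \le b$ containing exactly $a_3^Q$ level-$3$ elements.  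The resulting volumes $w_{q-1}(\sigma_{yes}) = a_3^Q + \sum_{\ell=0}^{3-i}\binom{q-1}{\ell}$ and $w_{q-1}(\sigma_{no}) = (n-a_3^Q) + \sum_{\ell=0}^{2-i}\binom{q-1}{\ell}$ are both at most $2^{q-1}$, so both successor characters drop.  Each successor retains at most one non-level-$3$ candidate---namely $x^*$ itself at level $i$ or $i+1$---so the endgame hypothesis is preserved.

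The hard part will be verifying that well-shapedness truly survives the recursion: in case~(a) the canonical arc sequence $\mathcal{L}^{\sigma}$ of a successor collapses to a single non-empty level-$3$ arc, which formally violates the requirement of three arcs at level~$3$.  I would resolve this by the standard device of re-labeling: identify the single non-empty region with one of $P,Q,R$ in the cyclic pattern of $\sigma_1$ or $\sigma_2$, and declare the other two level-$3$ arcs together with all nine arcs at levels $\le 2$ to be empty.  Alternatively---and cleaner---one observes that the argument above never uses anything beyond the endgame hypothesis itself, so the induction can be run directly on the invariant ``at most one candidate outside level~$3$'', sidestepping the formal collapse altogether.
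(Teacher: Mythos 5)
Your proposal is correct and follows essentially the same inductive argument as the paper's: induction on $ch(\sigma)$, casing on whether there is a single non-level-$3$ element, with a $1$-interval question built around it so that both $(q-1)$th volumes fall to at most $2^{q-1}$ (the paper fixes $a_3^Q = 2^{q-1}-\alpha$ to force $w_{q-1}(\sigma_{yes})=2^{q-1}$ exactly, whereas you exhibit the full admissible range, which amounts to the same thing). Your explicit observation that well-shapedness can formally fail once all surviving candidates collapse to level~$3$, and your fix of running the induction on the weaker invariant ``at most one non-level-$3$ candidate,'' is sound and somewhat more careful than the paper's terse remark that the number of intervals needed to represent the new state does not increase.
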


\begin{proof}
We prove the proposition by induction on $q,$ the character of the state. 
If $q=1,$ the only possibility is $\sum_{i=0}^{2} \sigma^{-1}(i) = 0$ and $\sigma^{-1}(3) = 2.$ Then, 
a question containing exactly one of the elements in $\sigma^{-1}(3)$ is enough to conclude the
search.

Now assume that $q > 1$ and that the statement holds for any state with the same structure and character $\leq q-1.$

If $\sum_{i=0}^{2} \sigma^{-1}(i) = 0$ then the solution is provided by the classical binary search in the set $\sigma (e),$ which 
can be clearly implemented using $1$-interval-queries. Notice also that the number of intervals needed to 
represent the new state, is never more than the number of intervals needed to represent $\sigma.$

Assume now that $\sum_{i=0}^{2} \sigma^{-1}(i) \neq 0$ and let $j$ be the index such that $\sigma^{-1}(j) = 1.$ 
Let $c$ be the only element in the $j$th level.

Let $\alpha = \sum_{i=0}^{3-j} {{q-1} \choose i} \leq 2^{q-1}.$ By the assumption of the character of $\sigma$ follows that $\sum_{i=0}^{3-j} {q \choose i} \leq w_q(\sigma) \leq 2^{q},$ hence $3-j < q$ and in addition $\alpha \leq 2^{q-1}.$ 

In addition, we have $\sum_{i=0}^{3-j} {{q-1} \choose i} + \sigma^{-1}(3) >  2^{q-1}.$ 

Choose $I$ to be the largest interval including $c$ and $2^{q-1} - \alpha$ 
other elements from the $3$rd level. The above observation guarantees the existence of such interval.
The possible states arising from such a question satisfy $w_{q-1}(\sigma_{yes}) = 2^{q-1}$ and 
are $w_{q-1}(\sigma_{no}) = w_{q}(\sigma) - w_{q-1}(\sigma_{yes}) \leq 2^{q} - 2^{q-1}.$ 
Hence, both states have character not larger than $q-1.$ It is also not hard to see that 
they both have a structure satisfying the hypothesis of the 
proposition. This proves the induction step.

Note that the number of intervals necessary to represent the new state is not 
larger than the initial one, then the state is also well-shaped. \qed
\end{proof}

We have shown that any question in the perfect strategy of \cite{Spe} can be implemented by 
4-interval questions. We can summarise our discussion in the following theorem.

\begin{theorem}
For all sufficiently large $m$  in the game 
played over the search space $\{0, \dots, 2^m - 1\}$ with 
$3$ lies, 
there exists a \emph{perfect} 4-intervals strategy. In particular, 
the strategy uses at most $N_{\min}(2^m,3)$ questions and all the states of the game 
are well shaped, hence representable by exactly $12 \log m$ bits (12 numbers from $\cal U$). 
\end{theorem}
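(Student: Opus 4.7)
The plan is to assemble the pieces already developed in this section into a single inductive argument over the run of Spencer's strategy. The initial state $\alpha$ is trivially well-shaped: its support is a single arc at level $0$, and all higher-level arcs are empty, which is compatible with the arc-count requirements of Definition~\ref{def:well-shaped} for $e=3$. Hence the well-shape invariant holds at the start, and the character of $\alpha$ equals $N_{\min}(2^m,3)$ by Proposition~\ref{proposition:volume_properties}.

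Next I would run Spencer's strategy and verify step-by-step that each of its prescribed questions can be realized as a $4$-interval query without breaking well-shapeness. In Phase~1 (while $\sum_{i=0}^{2}|\sigma^{-1}(i)|>1$), Spencer's rule fixes the counts $a_0^Q,a_1^Q,a_2^Q$ at $\lfloor |\sigma^{-1}(i)|/2\rfloor$ or $\lceil |\sigma^{-1}(i)|/2\rceil$, so both $Q$ and $\overline{Q}$ already satisfy the first three bounds of Theorem~\ref{th:generalized-questions}. Since $a_3^Q + a_3^{\overline{Q}} = |\sigma^{-1}(3)|$, at least one of the two values is at most $\lceil\tfrac{1}{2}|\sigma^{-1}(3)|\rceil \leq \lceil\tfrac{2}{3}|\sigma^{-1}(3)|\rceil$; asking the corresponding question (which yields the same partition of the universe) places us within the hypothesis of Theorem~\ref{th:generalized-questions}. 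That theorem then supplies a $4$-interval realization of the question whose yes- and no-resulting states are again well-shaped, so the invariant and the balancedness are preserved and we may continue the induction.

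For Phase~2 (when $\sum_{i=0}^{2}|\sigma^{-1}(i)|\leq 1$), I would invoke Proposition~\ref{proposition:end_game}, which shows that from any such state the search can be completed using only $1$-interval queries in exactly $ch(\sigma)$ further rounds; a $1$-interval query is a special case of a $4$-interval query, and the proposition also guarantees that the arc-structure is not expanded, so the well-shape property is retained through the endgame. Combining the two phases, the total number of questions equals Spencer's count, which is $N_{\min}(2^m,3)$ for all sufficiently large $m$, matching the lower bound of Proposition~\ref{proposition:volume_properties} and therefore giving a perfect strategy.

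Finally, since every state encountered is well-shaped, it decomposes, up to circular permutation, into one of the two twelve-arc patterns \eqref{eq:well shaped} or \eqref{eq:well shaped2}; specifying the state thus amounts to listing the $12$ boundary positions of these arcs in $\mathcal{U}$, i.e.\ $12\log m$ bits, as claimed. The main obstacle in executing this plan is making sure that the feasibility condition on $a_3^Q$ from \cite{Spe} (which is what guarantees Spencer's strategy is well defined for all large $m$) is compatible with the two-sided bound $\min\{a_3^Q,a_3^{\overline{Q}}\}\leq \lceil\tfrac{2}{3}|\sigma^{-1}(3)|\rceil$ required by Theorem~\ref{th:generalized-questions}; the elementary observation $a_3^Q + a_3^{\overline{Q}} = |\sigma^{-1}(3)|$ resolves this at every step, so the induction goes through for all sufficiently large $m$.
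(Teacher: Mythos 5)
Your proposal is correct and follows essentially the same route as the paper: run Spencer's two-phase strategy, use Theorem~\ref{th:generalized-questions} (applied to whichever of $Q$, $\overline{Q}$ has the smaller level-$3$ count) for Phase~1, and Proposition~\ref{proposition:end_game} for the endgame, maintaining well-shapedness by induction. The one place you add detail beyond the paper is the elementary observation $a_3^Q + a_3^{\overline{Q}} = |\sigma^{-1}(3)|$ to justify $\min\{a_3^Q,a_3^{\overline{Q}}\} \le \lceil\tfrac{2}{3}|\sigma^{-1}(3)|\rceil$, which the paper states without proof; this is a correct and welcome clarification but not a different approach.
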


\section{The proof of Theorem \ref{th:generalized-questions}}
Let  $\sigma$ be a state and $\langle a, b \rangle$ be a non-empty arc of $\sigma.$ 
We say that a question $Q$ splits the 
arc  $\langle a, b \rangle$ if there exists an interval $I$ in $Q$ that intersects $\langle a, b \rangle$ and contains exactly one of its
 boundaries $a, b$. In words, there is an interval in the question  such that   
some non-empty part of the arc satisfies a yes answer and some non-empty part of the arc satisfies a no answer.

If a question $Q$ splits exactly one arc on level $i$ of $\sigma$ according to whether such an arc is a mode, a saddle, or a step, 
we say that {\em at level $i$} the question $Q$ (or, equivalently, an interval of $Q$)
is {\em mode-splitting}, {\em saddle-splitting}, {\em step-splitting}, respectively. 

Let  $Q$ be a {\em step-splitting} question at level $i$. Let $\langle a, b \rangle$ be the arc at level $i$ which is split by 
an interval $I$ of $Q$. Then, by definition $I$ contains exactly one of the boundaries of the arc. 
 If $I$ contains the boundary of the arc that flanks an arc at level $i+1$ we say that $Q$ 
 (or, equivalently, an interval of $Q$)
 is {\em downward step-splitting}; if $I$ contains the boundary of the arc that flanks an arc at level $i-1$ we say that 
 $Q$ (or, equivalently, an interval of $Q$) is {\em upward step-splitting}.

We say that a question $Q$ {\em covers entirely} the 
arc  $\langle a, b \rangle$ if $[a, b]$ is contained in one of the intervals defining $Q$. 

If a question $Q$ covers entirely  an arc on level $i$ of $\sigma$ according to whether such an arc is a mode, a saddle, a step, 
we say that {\em at level $i$} the question $Q$ (or, equivalently, an interval of $Q$)
 is {\em mode-covering}, {\em saddle-covering}, {\em step covering}, respectively. 
Refer to Figure \ref{fig.well-shaped-dynamic} for a pictorial representation of the interval types and questions effect on states.
\setlength{\belowcaptionskip}{-10pt}
\begin{figure}[h]
	\centering \includegraphics[width=\textwidth]{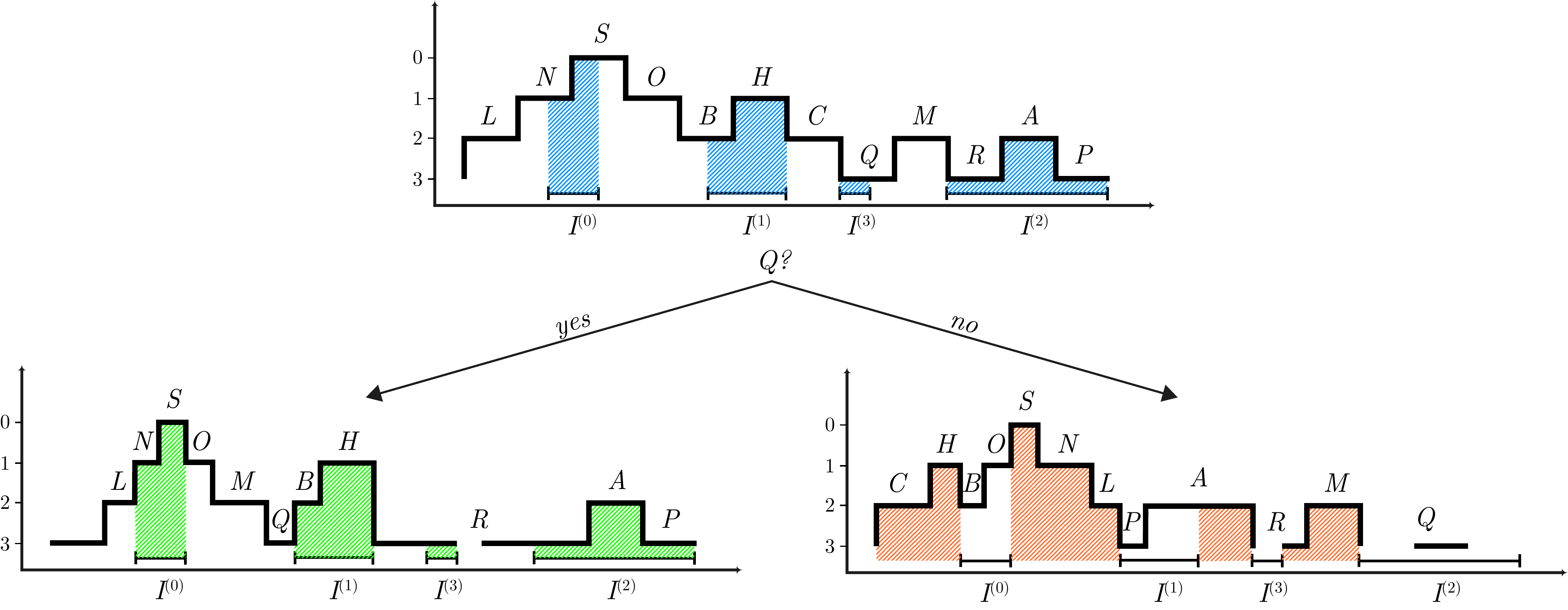}
	\caption{An Example of state dynamics. The question $Q$ is represented by the intervals $I^{(0)}, I^{(1)}, I^{(2)}$ and $I^{(3)}.$ The interval $I^{(0)}$ is {\em mode-splitting} at level $0$ and {\em downward step-splitting} at level $1$; $I^{(1)}$ is {\em mode-covering} at level $1$ and {\em saddle-splitting} at level $2$; $I^{(2)}$ is {\em mode-covering} at level $2$ and {\em saddle-covering} at level $3$; $I^{(3)}$ is {\em saddle-splitting} at level $3$. In the resulting states the filled volumes indicate the arcs of the state remained unchanged. The $\sigma^{yes}, \sigma^{no}$ states are represented on the support of the original state $\sigma$ to show how the elements belonging the lowest level disappear (the blank gaps on the shapes) from the support when they are in contradiction with more than 3 answers.}
		\label{fig.well-shaped-dynamic}
\end{figure}

We will define conditions on the intersection between the intervals defining a question and the arcs of a (well-shaped) 
state $\sigma$ such that the both $\sigma^{yes}$ and $\sigma^{no}$ are well shaped. It turns out that this condition can be defined 
locally, level by level and arc by arc.  

Let $\sigma$ be a well shaped state. 
Let ${\cal L}^{\sigma}(i)$ denote the list of arcs on level $i$ of $\sigma$. Let 
$|{\cal L}^{\sigma}(i)|$ denote the number of arcs on level $i$ of $\sigma$. Let $a$ be an arc on level $i$ of $\sigma$. Let 
$a_L$ and $a_R$ be the arcs preceding and following $a$ in ${\cal L}^{\sigma}.$
Let $I$ be an interval with non-empty intersection with the arc $a$.  
Assume a question $Q =I$ is asked and 
as a result of the answer we only update the level of the elements in $a$, 
while we leave the elements of $a_L$ and $a_R$ on the level they were before the answer.  
We denote by $\delta(a, I, yes, \ell)$ (respectively  $\delta(a, I, no, \ell)$) the difference between the number of arcs on level 
$\ell$ containing the elements of $a_L, a, a_R$ before and after the answer---when we only update the level of elements in $a$.

Note that an arc $a$ on level $i$ of $\sigma$ before the answer, can only be an arc on level $i$ or $i+1$ after the answer, 
since no arc can increase their level by more than one, and no arc can decrease their level. 
Thus, for $\ell \neq i,i-1$ we have that $\delta(a, I, yes, \ell), \delta(a, I, no, \ell) = 0.$ 
Moreover, we also have that for $\ell \in \{i, i+1\}$, it holds that  $\delta(a, I, yes, \ell), \delta(a, I, no, \ell) \in \{-1, 0, 1\}.$ 

The importance of the above definition is that it can be employed to characterize the local evolution 
of the arcs on the different levels of a well shaped state. 
For instance, for an interval $I$ that intersects both $a$ and $a_R,$ 
because the definition of $\delta(a,I, \cdot, \ell)$ only accounts for the change 
of level that happen in  the arc $a$ {\em assuming 
that both $a_R$ and $a_L$ are kept as they were before the answer}, it is not hard to check that   
$\delta(a, I, yes, \ell) + \delta(a_R, I, yes, \ell)$ correctly  counts the difference in the number of arcs spanning 
$a$ and $a_R$ on level $\ell$, from before to after the answer $yes$ to 
$I$---when we only consider the change of level of elements of $a$ and $a_R$.  
This modular behaviour of the $\delta()$ trivially extends to the case of intervals spanning more arcs. Then, we can 
compute the dynamics of the number of arcs on the level $i$ of the state before and after a 
question $Q$ by summing up over each interval $I$ of $Q$, the $\delta(a,I,yes,i)$ contribution on each arc intersecting $I$ and
 for each interval $I$ not in  $Q$ (i.e., in $\overline{Q}$), the $\delta(a,I, no, i)$ contribution on each arc covered by $I.$ 

Given  a well shaped  state $\sigma$ with $ {\cal L}^{\sigma}(i)$ as defined above, 
and a question $Q$ let $\sigma^{yes}$ and $\sigma^{no}$ be the resulting states for a yes and no answer, and  
let ${\cal L}^{\sigma}_{yes}(i), {\cal L}^{\sigma}_{no}(i)$ be 
defined in analogy to ${\cal L}^{\sigma}$ and ${\cal L}^{\sigma}(i)$.
Assuming that $\sigma^{-1}(-1) = \emptyset$, for each $i = 0, 1, 2, 3$ we have 
\begin{eqnarray} 
|{\cal L}^{\sigma}_{yes}(i)| = |{\cal L}^{\sigma}(i)| &+&  \sum_{I \in Q} \, \, \, 
\sum_{\substack{a \in {\cal L}^{\sigma}(i) \cup {\cal L}^{\sigma}(i-1)\\ a \cap I \neq \emptyset}} \delta(a, I, yes, i) \nonumber \\
&+& 
\sum_{I \in \overline{Q}}  \, \, \, 
\sum_{\substack{a \in \sigma^{-1}(i) \cup \sigma^{-1}(i-1) \\ a \subseteq I}} \delta(a, I, no, i)   \label{eq:dyn1}
\end{eqnarray}
\begin{eqnarray}
|{\cal L}^{\sigma}_{no}(i)| = |{\cal L}^{\sigma}(i)| &+& \sum_{I \in Q} \, \, \, 
\sum_{\substack{a \in \sigma^{-1}(i) \cup \sigma^{-1}(i-1) \\ a \cap I \neq \emptyset}} \delta(a, I, no, i)   \nonumber \\
&+& 
\sum_{I \in \overline{Q}}  \, \, \, 
\sum_{\substack{a \in \sigma^{-1}(i) \cup \sigma^{-1}(i-1) \\ a \subseteq I}} \delta(a, I, yes, i)  \label{eq:dyn2}
\end{eqnarray}
The following proposition,  which is easily verified by direct inspection, summarizes the values of $\delta()$ which are significative for our
analysis.

\begin{proposition}\label{prop:question_effect}
Let $\sigma$ be a state and $a$ be a non empty arc of $\sigma^{-1}(i)$. 
Let $Q$ be a question and $I$ an interval of $Q$ with non-empty intersection with $a$. Then we have
\begin{enumerate}
	\item[a)] if $Q$ is \emph{ saddle-splitting } at level $i$, then for $\ell = i, i+1\,$ we have $\delta(a, I, yes, \ell) = \delta(a, I, no, \ell) = 1$, 
	i.e., the number of arcs is increased by 1 on level $i$ and $i+1$ both in the case of a yes and a no answer, since part of the saddle 
	is transferred to the next level and therefore in the list of arcs there will be an additional (empty) arc between the part going to level $i+1$ and the flanking arc at level $i-1.$
	
	\item[b)] if $Q$ is  \emph{ mode-splitting } at level $i$, then for $\ell = i, i+1\,$ we have $\delta(a, I, yes, \ell) = \delta(a, I, no, \ell) = 0$, i.e., 
	the number of arcs remains unchanged on level $i$ and $i+1$ both in the case of a yes and a no answer, since the part of the 
	mode that is transferred to level $i+1$ get merged with the flanking arc on level $i+1$--- recall that the value of 
	$\delta$ is defined on the assumption that no other arc changes level.

	\item[c)] if $Q$ is  \emph{ upward step-splitting } at level $i$, then for $\ell = i, i+1\,$ we have $\delta(a, I, yes, \ell) = 0$ and $ \delta(a, I, no, \ell) = 1$, i.e., the number of arcs is increased by 1 on level $i$ and $i+1$ only in the case of a no answer, since part of the step is transferred to the next level and therefore in the list of arcs there will be an additional (empty) arc between the part going to level $i+1$ and the flanking arc at level $i-1.$
	
	\item[d)] if $Q$ is  \emph{ downward step-splitting } at level $i$, then for $\ell = i, i+1\,$ we have $\delta(a, I, yes, \ell) = 1$ and $ \delta(a, I, no, \ell) = 0$, i.e., the number of arcs is increased by 1 on level $i$ and $i+1$ only in the case of a yes answer, since part of the step is transferred to the next level and therefore in the list of arcs there will be an additional (empty) arc between the part going to level $i+1$ and the flanking arc at level $i-1.$

	\item[e)] if $Q$ is  \emph{ saddle-covering } at level $i$, then for $\ell = i, i+1\,$ we have $\delta(a, I, yes, \ell) = 0$ and $ \delta(a, I, no,\ell) = 1$, i.e., the number of arcs is increased by 1 on level $i$ and $i+1$ only in the case of a no answer, since the saddle is transferred to the next level and therefore in the list of arcs there will be an additional (empty) arc between the saddle going to level $i+1$ and one of the flanking arc at level $i-1.$ --- note that the other empty needed arc is the saddle arc becoming empty at level $i.$

	\item[f)] if $Q$ is  \emph{ mode-covering } at level $i$, then for $\ell = i, i+1\,$ we have $\delta(a, I, yes, \ell) = 0$ and $ \delta(a, I, no, \ell) = -1$, i.e., the number of arcs is decreased by 1 on level $i$ and $i+1$ only in the case of a no answer, since the mode is transferred to level $i+1$ get merged with both the flanking arcs at level $i+1$  into one single arc at level $i+1.$
	
	\item[g)] if $Q$ is  \emph{ step-covering } at level $i$, then for $\ell = i, i+1\,$ we have $\delta(a, I, yes, \ell) = \delta(a, I, no, \ell) = 0$, i.e., the number of arcs remains unchanged on level $i$ and $i+1$ both in the case of a yes and a no answer, since the arc (which was a step at level $i$) is transferred to level $i+1$ and gets merged with the flanking arc on level $i+1$ and at level $i$ we have a new empty arc (to satisfy the unit increase in the level of adjacent arcs).

\end{enumerate}
For the complementary question $\overline{Q}$,  the same rules apply with the role of $yes$ and $no$ swapped.
\end{proposition}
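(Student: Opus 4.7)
My plan is to prove Proposition \ref{prop:question_effect} by direct case analysis, going one by one through the seven configurations (a)--(g). Two general principles drive the proof. First, the level-update rule: on a "yes" answer the elements of $\overline{Q}$ have their level raised by one while elements of $Q$ are unchanged, and on "no" the roles swap. Restricted to the single arc $a \subseteq \sigma^{-1}(i)$ and the interval $I$ intersecting $a$, this means the elements of $a \cap I$ stay on level $i$ for "yes" and rise to $i+1$ for "no", while those of $a \setminus I$ (empty when $I$ covers $a$) do the opposite.

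Second, the bookkeeping for the normalized arc list $\mathcal{L}^{\sigma}$. Since this list is required to have minimal length subject to consecutive arcs differing in level by exactly one, after recomputing the levels of the elements of $a$ (leaving $a_L$ and $a_R$ on their original levels, as prescribed by the definition of $\delta$) I apply two local fixes to the contiguous sequence $a_L$, (pieces of $a$), $a_R$: adjacent blocks on the same level merge into one arc, and adjacent blocks whose levels differ by two force an empty arc to be inserted at the intermediate level. The value $\delta(a,I,\cdot,\ell)$ is then the signed change in the number of arcs at level $\ell$ in this region after the fixes are applied.

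With these principles in place, each case becomes a short counting check once one fixes (by left-right symmetry) which boundary of $a$ lies inside $I$. I would first treat the splitting cases (a)--(d), laying out the post-answer blocks $a_L,\, a \cap I,\, a \setminus I,\, a_R$ at the levels dictated by the answer and the type of $a$. The structural point that organizes them is that a saddle has both neighbors at level $i-1$ (forcing empty-arc insertion, hence $\delta=+1$ in both levels for either answer), a mode has both neighbors at level $i+1$ (forcing a merge with the rising piece, so $\delta=0$ throughout), and a step has one neighbor on each side, so whether the rising piece meets the merge side or the insertion side is jointly determined by the direction of the step and by the answer, producing the asymmetric $\delta$ in (c) and (d). The covering cases (e)--(g) then follow from the same reasoning specialized to $a \setminus I = \emptyset$: on "yes" nothing moves and $\delta=0$, while on "no" the whole arc $a$ migrates to level $i+1$. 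For a saddle (e) this creates two empty arcs on level $i$ (the evacuated $a$ and a forced filler) for a net $+1$; for a mode (f) the three arcs $a_L, a, a_R$ fuse into a single block at level $i+1$, giving $\delta=-1$; for a step (g) exactly one merge and one insertion cancel. The final claim about $\overline{Q}$ is immediate from the definition of the answer function $b(y)$: replacing $I$ with its complement around $a$ swaps $a \cap I$ and $a \setminus I$, which is precisely the effect of swapping "yes" and "no".

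The main obstacle I expect is bookkeeping rather than mathematical depth: each of the seven cases must be verified at both endpoints of the modified region, and one must consistently apply the minimum-length normalization so that mergers really do cancel arc counts and empty arcs are inserted exactly where needed. There is also a minor degenerate point at $i=3$, where rising elements exit the support and can leave additional empty slots; this should be dealt with by noting that the statement concerns only the internal count $\delta$ on the arcs spanned by $a_L, a, a_R$, and that an empty level-$3$ remnant is treated uniformly with any other empty arc inserted by the normalization rules.
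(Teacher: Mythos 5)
Your proposal is correct and follows essentially the same route as the paper, which justifies this proposition purely by direct inspection of the seven local configurations (the merge/empty-arc bookkeeping you describe is exactly the reasoning embedded in the paper's explanatory clauses). Your explicit treatment of the normalization rules and of the $\overline{Q}$ swap is a faithful, slightly more detailed write-up of the same case analysis.
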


By Proposition \ref{prop:question_effect}, it follows that, given a well shaped state $\sigma$ and a question $Q$, states $\sigma^{yes}$ and $\sigma^{no}$ 
are both well shaped if and only if for each $i=0,1,2,3$ we have $|{\cal L}^\sigma (i)| = |{\cal L}^\sigma_{yes} (i)| = |{\cal L}^\sigma_{no} (i)|,$ 
which, by (\ref{eq:dyn1})-(\ref{eq:dyn2}),  is equivalent to 
have 

\begin{eqnarray}
\sum_{I \in Q}  \, \, \,
\sum_{\substack{a \in {\cal L}^{\sigma}(i) \cup {\cal L}^{\sigma}(i-1)\\ a \cap I \neq \emptyset}} \!\!\!\!\!\! \delta(a, I, yes, i)+ 
\sum_{I \in \overline{Q}}  \, \, \, 
\sum_{\substack{a \in {\cal L}^{\sigma}(i) \cup {\cal L}^{\sigma}(i-1) \\ a \subseteq I}} \!\!\!\!\!\!\delta(a, I, no, i)   = 0 \label{eq:well-shapeness-condition-yes}\\
\sum_{I \in Q}  \, \, \,
\sum_{\substack{a \in {\cal L}^{\sigma}(i) \cup {\cal L}^{\sigma}(i-1) \\ a \cap I \neq \emptyset}} \!\!\!\!\!\! \delta(a, I, no, i) + 
\sum_{I \in \overline{Q}}  \, \, \, 
\sum_{\substack{a \in {\cal L}^{\sigma}(i) \cup {\cal L}^{\sigma}(i-1) \\ a \subseteq I}} \!\!\!\!\!\!\delta(a, I, yes, i)  = 0. \label{eq:well-shapeness-condition-no}
\end{eqnarray}

Using Proposition \ref{prop:question_effect} and the condition in (\ref{eq:well-shapeness-condition-yes})-(\ref{eq:well-shapeness-condition-no}),
 we have the following sufficient conditions for build a question that preserves well-shapeness.

\begin{lemma} \label{lemma:well-shapeness}
Given a  well-shaped state $\sigma$ and a question $Q$, if the following set of conditions is satisfied then both the resulting states 
$\sigma_{yes}$ and $\sigma_{no}$ are 
well-shaped. 
For each $i = 0, 1, 2$ 
\begin{itemize}
\item[(a)] At most one arc is splitted on level $i$
\item[(b)] Exactly one of the following holds 
	\begin{enumerate}
	\item[(i)]   at level $i$ the question $Q$ is mode-splitting;
	\item[(ii)]  at level $i$ the question $Q$ is upward step-splitting and mode-covering.
	\item[(iii)] at level $i$ the question $Q$ is downward step-splitting and a mode is completely uncovered---equivalently the complementary question $\overline{Q}$ is mode-covering at level $i$. 
	\item[(iv)]  at level $i$ the question $Q$ is saddle-splitting and mode-covering and there is also a mode completely uncovered---equivalently the complementary question $\overline{Q}$ is mode-covering at level $i$.
	\end{enumerate}
\item[(c)] If besides the condition in (b) the question $Q$ is also saddle-covering then $Q$ also covers a mode different from the one possibly used to satisfy (b).
\end{itemize}

\end{lemma}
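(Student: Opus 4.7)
The plan is to verify that the left-hand sides of (\ref{eq:well-shapeness-condition-yes})--(\ref{eq:well-shapeness-condition-no}) vanish at every level $i\in\{0,1,2,3\}$ and for every answer $b\in\{yes,no\}$. Since $\sigma$ is well-shaped, this is equivalent to preserving the arc count on each level, and hence to the well-shapedness of both $\sigma_{yes}$ and $\sigma_{no}$.

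The first step is a decoupling observation following from Proposition \ref{prop:question_effect}: for every non-empty arc $a$ on level $i$ and every interval $I$ meeting $a$, $\delta(a,I,b,i)=\delta(a,I,b,i+1)$ for both $b$. Let $C_i^{b}$ denote the total $\delta$-contribution of arcs originally on level $i$ (with the $Q/\overline{Q}$ convention appearing in the two equations). The identity above shows that $C_i^{b}$ equals both the contribution of these arcs to the change at level $i$ and to the change at level $i+1$. Writing $\Delta_i^{b}$ for the change at level $i$, we have $\Delta_i^{b}=C_i^{b}+C_{i-1}^{b}$ with $C_{-1}^{b}:=0$, so the identities $\Delta_i^{b}=0$ for every $i$ and every $b$ are equivalent to $C_i^{b}=0$ for every $i$ and every $b$. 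This reduces the problem to a purely per-level check.

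The heart of the proof is a case analysis at each $i\in\{0,1,2\}$ governed by condition (b). From Proposition \ref{prop:question_effect} one reads the $\delta$-tables: mode-splits and step-covers give $0$; saddle-splits give $+1$ in both $yes$ and $no$; upward (resp.\ downward) step-splits give $+1$ only in $no$ (resp.\ $yes$); $Q$-saddle-covers and $\overline{Q}$-saddle-covers give $+1$ only in $no$ and $yes$ respectively; and $Q$-mode-covers and $\overline{Q}$-mode-covers give $-1$ only in $no$ and $yes$ respectively. Condition (a) makes the split at level $i$ unique. In sub-case (b)(i) the mode-split is neutral and the only imbalance can come from saddle-covers, compensated by the extra mode-covers forced by (c); in (ii) the $+1$ of the upward step-split in $no$ is cancelled by the $-1$ of the mandated $Q$-mode-cover, and (iii) is its mirror; in (iv) the $+1$ of the saddle-split in both answers is cancelled by the $Q$-mode-cover in $no$ and by the $\overline{Q}$-mode-cover in $yes$. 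In every sub-case any further saddle-cover is neutralised by the additional mode-cover supplied by (c), yielding $C_i^{b}=0$ for $i=0,1,2$.

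Level $i=3$ is handled by a short closing argument. The three arcs on level $3$ of a well-shaped state are all saddles, so $C_3^{b}$ collects only contributions from splits and covers of those three arcs. Using that $C_2^{b}=0$ has already been established and that any interval of $Q$ touching a level-$3$ saddle must also interact with the flanking level-$2$ arcs in a way constrained by (a)--(c), a direct count on the level-$3$ patterns shows $C_3^{b}=0$. The principal technical obstacle is precisely this level-$3$ bookkeeping, together with the systematic verification that condition (c) really provides a fresh mode-cover to cancel every stray saddle-cover in every admissible combination of splits and covers allowed at each level.
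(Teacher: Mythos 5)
Your proposal follows the same overall strategy as the paper's proof: a case-by-case verification, using the $\delta$-tables of Proposition~\ref{prop:question_effect}, that the split/cover combinations permitted by conditions (a)--(c) make all local contributions vanish so that (\ref{eq:well-shapeness-condition-yes})--(\ref{eq:well-shapeness-condition-no}) hold. The explicit telescoping $\Delta_i^b = C_i^b + C_{i-1}^b$, which reduces the verification to the per-level quantities $C_i^b$, is a useful sharpening that the paper leaves implicit; the sub-cases (b)(i)--(iv) and the handling of condition (c) then track the paper's own computation line by line.

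The one place your argument is weaker than it needs to be is the top level $i=3$. Your assertion that ``any interval of $Q$ touching a level-$3$ saddle must also interact with the flanking level-$2$ arcs'' is not true in general: an interval can coincide exactly with a level-$3$ arc, or lie strictly inside one, without touching the level-$2$ neighbours, and the promised ``direct count'' is not carried out. More to the point, Proposition~\ref{prop:question_effect} is derived under the assumption that elements pushed to the next level remain in the support, and that is precisely what fails at the top level: an element pushed from level $3$ to level $4$ leaves the support, so (for instance) entirely covering a level-$3$ saddle with an interval of $Q$ causes its two level-$2$ neighbours to merge on a \emph{no} answer, yielding $\delta=-1$ at levels $2$ and $3$ rather than the $+1$ that item (e) of the Proposition would predict. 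A correct treatment therefore requires a separate $\delta$-analysis for level-$3$ arcs, which your sketch does not supply. In fairness, the paper's own proof restricts its explicit calculation to the split level and the adjacent covered/uncovered modes and does not address level $3$ at all, so your proposal is no less complete than the published argument on this point; but the ``short closing argument'' you invoke for $C_3^b=0$ is, as written, both unjustified and based on an incorrect premise.
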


\begin{proof}
Let $a \in \sigma^{-1}(i)$ the splitted arc at level $i$. Let $I$ the interval of $Q$ with non-empty intersection with $a.$ Then 
\begin{itemize}
	\item[i)] if $Q$ is \emph{ mode-splitting } at level $i$, then for $\ell = i, i+1\,$ we have that $\delta(a,I,yes,\ell) = \delta(a,I,no,\ell) = 0$.
	\item[ii)] if $Q$ is \emph{ upward step-splitting } at level $i$, then for $\ell = i, i+1\,$ we have $\delta(a,I,yes,\ell) = 0$ and $\delta(a,I,no,\ell) = 1$. Moreover, let 
	$ b \in \sigma^{-1}(i)$ the \emph{entairely covered mode} at level $i$ and $I_b$ is the interval of $Q$ witch cover $b$, then for $\ell = i, i+1\,$
	we have that $\delta(b,I_b,no,\ell) = -1$.
	
	 Summing up level by level all the $\delta()$ elements, for $\ell = i, i+1\,$ we have that $\delta(a,I,no,\ell) + \delta(b,I_b,no,\ell) = 0$.
	 
	\item[iii)] if $Q$ is \emph{ downward step-splitting } at level $i$, then for $\ell = i, i+1\,$ we have that $\delta(a,I,yes,\ell) = 1$ and $\delta(a,I,no,\ell) = 0$. 
	Moreover, let $ b \in \sigma^{-1}(i)$ the \emph{completely uncovered mode} at level $i$ and $I_b$ is the interval of $\overline{Q}$ 
	witch cover $b$, then for $\ell = i, i+1\,$ we have that $\delta(b,I_b,no,\ell) = -1$.
	
	 Summing up level by level all the $\delta()$ elements, for $\ell = i, i+1\,$ we have that $\delta(a,I,yes,\ell) + \delta(b,I_b,no,\ell) = 0$.
	 
	\item[iv)] if $Q$ is \emph{ saddle step-splitting } at level $i$, then for $\ell = i, i+1\,$ we have that $\delta(a,I,yes,\ell) = \delta(a,I,no,\ell) =  1$. 
	Moreover, let $ b \in \sigma^{-1}(i)$ the \emph{entairely covered mode} at level $i$ and $I_b$ is the interval of $Q$ witch cover $b$, 
	then for $\ell = i, i+1\,$ we have that $\delta(b,I_b,no,\ell) = -1$. 
	Also let $ c \in \sigma^{-1}(i)$ the \emph{completely uncovered mode} at level $i$ and $I_c$ the interval of $\overline{Q}$
	 witch cover $c$, then for $\ell = i, i+1\,$ we have that $\delta(c,I_c,no,\ell) = -1$.
	
	Summing up level by level all the $\delta()$ elements, for $\ell = i, i+1\,$ the result is that 
	$\delta(a,I,yes,\ell) + \delta(c,I_c,no,\ell) = 0$ and $\delta(a,I,no,\ell) + \delta(b,I_b,no,\ell) = 0$.

\end{itemize}

Moreover, let $ b \in \sigma^{-1}(i)$  be a \emph{saddle} at level $i$ and $I_b$ is the interval of $\overline{Q}$ which covers $b$, 
then for $\ell = i, i+1\,$ we have that $\delta(a,I,yes,\ell) = 0 $ and $ \delta(a,I,no,\ell) = 1$. 
Let $ c \in \sigma^{-1}(i)$ a \emph{mode} at level $i$ ---recall that $c$ is different from the one possibly used to satisfy the previous cases --- 
and $I_c$ the interval of $Q$ witch cover $c$, then for $\ell = i, i+1\,$ we have that $\delta(c,I_c,no,\ell) = -1$.
Summing up all the  $\delta()$ elements we have that $\delta(a,I,no,\ell) + \delta(c,I_c,no,\ell) = 0$ for $\ell = i, i+1\,$.

Following the previous conditions both the equations (\ref{eq:well-shapeness-condition-yes}) and 
(\ref{eq:well-shapeness-condition-no}) are satisfied. 
Then the resulting states $\sigma_{yes}$ and $\sigma_{no}$ are well shaped. \qed

\end{proof}

Now we are ready to prove our main technical result.

{\bf Theorem \ref{th:generalized-questions}.}
{\em Let $\sigma$ be a well-shaped state of type $\tau (\sigma) = (a_0, b_0,c_0,d_0) $.
For all integers 
$\displaystyle{ 0 \leq a \leq a_0 \quad  0 \leq  b \leq \lceil \frac{1}{2} b_0 \rceil 
\quad 0 \leq  c \leq \lceil \frac{1}{2} c_0 \rceil  
\quad 0 \leq  d \leq \lceil \frac{2}{3} d_0 \rceil
}$
there exists a $4$-interval question $Q$ of type $|Q| = [a,b,c,d]$  that we can ask in state $\sigma$ and such 
that both the resulting "yes" and "no" states are \emph{well-shaped}.}

\begin{proof}
We first show how to select the intervals of the question $Q$ in order to satisfy the desired type.
We proceed level by level. For each $i=0,1,2,3,$ we show how to select up to 4 intervals that cover the 
required number of elements in the first $i$ levels.  
For each level $i = 0,1,2,3$ we record in a set $\mathcal{E}(i)$ the extremes of the 
intervals selected so far that have a neighbour on  the next level. We refer to the elements in $\mathcal{E}(i)$ as the boundaries at level $i$. When processing the next level, we try to select arc neighbouring the elements in $\mathcal{E}(i)$ since this means we can cover elements at the new level without using additional intervals. Arguing with respect to such boundaries, 
we show that the (sub)intervals selected at all level can be merged into at most $4$ intervals. Hence the resulting question $Q$ is a $4$-interval question.
Finally, we will show that asking $Q$ in  $\sigma$ both the resulting states are well-shaped states. 

Recall the arc notation used in (\ref{eq:well shaped})-(\ref{eq:well shaped2}). 
In our construction, a special role will be played by the arcs $S, H, A$, which are the greatest mode respectively of level $0,1$ and $2$,  and the larger between their two neighbouring arcs at the level immediately below. Therefore, let us  denote by $A^{(1)}$ the larger arc between $N$ and $O$; we denote by $A^{(2)}$ be the larger arc between $B$ and $C$; and 
finally, we denote by $A^{(3)}$ the larger arc between $R$ and $P$.

Moreover, we denote by $s^+$ the boundary between $S$ and $A^{(1)}$ and with $s^-$ the other boundary of $S$.

Analogously, we denote by $h^+$ the boundary between $H$ and $A^{(2)}$ and with $h^-$ the other boundary of $H$.

We denote by $a^+$ the boundary between $A$ and $A^{(3)}$ and with $a^-$ the other boundary of $A$. 

\begin{enumerate}
\item[Level $0$] For any  $0 < a \leq a_0$ there exists $s^{\ast} \in S $  such that denoting by $S^\ast$ the sub-arc of $S$ between $s^\ast$ and $s^+$
we have that $|S^\ast | = a $ 
and the boundary of $S^\ast$ includes $s^+.$ Then we have ${\cal E}(0) \supseteq \{ s^+\}.$

Therefore, with one interval $I^{(0)}= S^\ast$ we can accommodate the $a$ elements on level $0$ and guarantee that this interval has an extreme at $s^+.$

Moreover, the interval $I^{(0)}$ on arc $S$ is a mode-splitting interval.

\item[Level $1$] By the assumption $b \leq \lceil \frac{1}{2} b_0 \rceil $, and the definition of $A^{(1)}$ it follows that 
$b \leq |A^{(1)}| + |H|$. We now argue by cases
\begin{enumerate}
\item $b \leq |H|$. Then there exists $h^\ast$ in $H$ such that the sub-arc $H^\ast \subseteq H$ between $h^\ast$ and $h^+$ satisfies 
$|H^*| = b$ and we can cover it with one interval $I^{(1)}$ with a boundary at $h^{+}$.
\item $|H| < b \leq |H|+|A^{(1)}|$. Then, there exists $x_1^* \in A^{(1)}$ such that  letting $X^{(1)}$ be the sub-arc of $A^{(1)}$  
between $x_1^*$ and $s^{+}$, we have $|X^{(1)}| + |H| = b$ and we can cover these $b$ elements extending the previously defined 
$I^{(0)}$ so that it covers $X^{(1)}$ too and having $I^{(1)} = H.$ In this case we have that the boundaries of $I^{(1)}$ are both $h^+$ and 
$h^-.$
\end{enumerate}
		
\noindent
Summarising, we can cover the $a$ elements on level $0$ and the $b$ elements on level $1$ with at most two intervals and
guarantee that the boundaries of these intervals include $h^+.$ 

Moreover either the interval $I^{(1)}$ on arc $H$ is a mode-splitting interval or the interval $I^{(1)}$ covers entirely the mode $H$ and the interval $I^{(0)}$ on arc $A^{(1)}$ is a step-splitting interval.

Then, the choice of the intervals so far satisfies conditions in  Lemma \ref{lemma:well-shapeness}. 

\item[Level $2$] Again we argue by cases 
\begin{enumerate}
\item $ c \leq |A|.$ Then, there exists $a^\ast$ in $A$ such that letting $A^\ast$ be the sub-arc of $A$ between $a^\ast$ and $a^{+}$ we have 
$|A^\ast| = c$ and we can cover it with one interval $I^{(2)} = A^\ast$ with one boundary in $a^{+}$. 
\item  $|A| < c \leq |A| + |A^{(2)}|$ Then, there exists $x^* \in A^{(2)}$ such that  letting $X^{(2)}$ be the sub-arc of $A^{(2)}$  
between $x^*$ and $h^{+}$, we have $|X^{(2)}| + |A| = c$ and we can cover these $c$ elements extending the previously defined 
$I^{(1)}$ so that it covers $X^{(2)}$ too and having $I^{(2)} = A.$ In this case we have that the boundaries of $I^{(2)}$ are both $a^+$ and 
$a^-.$
\item $c > |A| + |A^{(2)}|.$ Let $E$ denote the largest arc on level 2 not in $\{A^{(2)}, A\}.$ Then, by the definition of $A^{(2)}$ we have that 
$|A| + |A^{(2)}| + |E| \geq |Y|+|Z|$ where $Y$ and $Z$ denote the arcs on level $2$ not in $\{A, A^{(2)},E\}.$  This is true because, at 
least one of the arcs $Y, Z$ is not larger than $A^{(2)}$ and the other one is not larger than $E$.

Then, by the assumption $c \leq \lceil \frac{c_0}{2} \rceil$ it follows that  $ |A| + |A^{(2)}|+ |E| \geq c.$ 
Let $z$ be the boundary of $A^{(2)}$ on the opposite side with respect to $H$.
Let $e^+$ be the boundary between $E$ and a neighbouring arc at level $3$ or at level $1$ according to whether 
$z$ is flanking an arc at level $1$ or an arc at level $3$---with reference to Figures \ref{fig.well-shaped}, \ref{fig.well-shaped2}, is
an easy direct inspection shows that such a choice is always possible. 

Therefore, there exists $e^\ast \in E$ such that letting   
$E^\ast$ be the sub-arc of $E$ between $e^\ast$ and $e^+$ we have  $|A| + |A^{(2)}| + |E^\ast |= c$ and 
we can cover the corresponding set of elements by: (i)  extending $I^{(1)}$ from $h^+$ and have it include the whole $A^{(2)}$; (ii) defining 
$I^{(2)} = A$; defining a fourth interval $I^{(3)} = E^*.$ Therefore, we have that the boundaries of $I^{(2)}$ are both $a^+$ and $a^-$ and, in the case 
of a $\sigma$ of type in Fig. \ref{fig.well-shaped2},  the 
boundary of $I^{(3)}$ includes $e^+,$ and the boundary of $I^{(1)}$ is the boundary $z$ 
of the arc $A^{(2)}$ where it joins its adjacent arc at level 3.  

\end{enumerate}

Summarising, we can cover the $a$ elements on level $0$ and the $b$ elements on level $1$ and the $c$ elements of level $2$ 
with at most four intervals. More precisely, if, proceeding as above we only use  three intervals, $I^{(0)}, I^{(1)}, I^{(2)},$ 
(and set $I^{(3)} = \emptyset$),  we also guarantee that the boundaries of these intervals
include $a^+$. 
On the other hand, if we use four intervals, (in particular,  $I^{(3)} \neq \emptyset$) 
we have that the boundaries of these intervals include $a^+, a^-$ and exactly one between $e^+, z$.
Therefore $\{a^+, a^-\} \subset {\cal E}(2) \subset \{a^+, a^-, z, e^+\}$.
Notice that, since there are only three arcs at level 3; in the case where $I^{(3)} \neq \emptyset$ 
either there is an arc on level $3$ with both ends neighbouring the boundaries in ${\cal E}(2),$ or each arc on level 3
has one end neighbouring a boundary in ${\cal E}(2).$
 
Moreover exactly one of the following cases holds (i) the interval $I^{(2)}$ on arc $A$ is a mode-splitting interval; 
(ii) the interval $I^{(2)}$ covers entirely the mode $H$ and the interval $I^{(1)}$ on arc $A^{(2)}$ is a step-splitting interval; 
(iii) the interval $I^{(2)}$ covers the mode $H$, no interval intersects mode $M$ and the interval $I^{(1)}$ on arc $A^{(2)}$ is
saddle-splitting; 
(iv) the interval $I^{(2)}$ covers  the mode $H$ and the interval $I^{(1)}$ covers the arc $A^{(2)}$ 
and the interval $I^{(3)}$ on arc $E$ is downward step-splitting; 
(v) the interval $I^{(2)}$ covers the mode $H$, the interval $I^{(1)}$ covers the arc $A^{(2)}$, hence it is saddle-covering, 
and the interval $I^{(3)}$ on arc $E$ is upward step-splitting.	.

In all the above five cases, the (partial) question built so far, with the intervals defined for levels 0, 1, 2, satisfy the conditions
of Lemma \ref{lemma:well-shapeness}.

\item[Level 3] Let us denote by $W, U$ the two arcs at level 3 which are different from $A^{(3)},$ with $|W| \geq |U|.$ 
Then, by definition we also have $|A^{(3)}| \geq |U|$, hence
$ |A^{(3)}| + |W| \geq \frac{2}{3} d_0 \geq d.$ We now argue by cases:
\begin{enumerate}
\item $d < |A^{(3)}|$. Then,   there exists $x^\ast_3$ in $A^{(3)}$ such that the sub-arc $X^{(3)}$ between $a^+$ and $x^\ast_3$ has cardinality 
$|X^{(3)}| = d$ and can be covered by extending $I^{(2)}$ (which have a boundary at $a^+$).
\item $|A^{(3)}| < d \leq  |A^{(3)}| + |W|$.  

We have two sub-cases:
\begin{itemize}
\item $I^{(3)} = \emptyset.$ I.e., for accommodating the question's type on Levels 0,1,2, we have only used three intervals. By assumption, there exists a sub-arc $W^\ast$ of $W$ such that $|W^\ast| + |A^{(3)}| = d.$ Then, defining $I^{(3)} = W^\ast,$ and extending $I^{(2)}$ (as in the previous case) so that 
it includes the whole of $A^{(3)}$ guarantees that the four intervals $I^{(0)}, \dots I^{(3)}$ define a question of the desired type. 
\item $I^{(3)} \neq \emptyset.$ Then, by the observations above, as a result of the construction on level 2, either there is an 
arc on level $3$ with both ends at a boundary in ${\cal E}(2)$ or each arc on level $3$ has a boundary in ${\cal E}(2).$ In the latter case, we can 
clearly extend the intervals $I^{(2)}$ and $I^{(3)}$ in order to cover $d$ elements on level 3.
In the former case, let $Z$ denote the arc with both ends at boundaries in ${\cal E}(2)$. %
If $|Z| \geq d$, we can simply extend $I^{(2)}$ and $I^{(3)}$ towards the internal part of $Z$ until they include $d$ elements of $Z$.
If $|Z| < d$ then we can extend $I^{(2)}$ so that it includes $Z$ and $I^{(3)}.$ 
\end{itemize}

\end{enumerate}

\end{enumerate}

Since the way intervals are extended on level 3 do not affect the arc covering and splitting on the previous level, we have that in all cases the resulting 4-interval question satisfies the conditions of Lemma \ref{lemma:well-shapeness}, which 
guarantees that both resulting states are well-shaped. The proof is complete. 
\qed
\end{proof}

Refer to Figures \ref{fig.well-shaped-questions} and \ref{fig.well-shaped-questions2} for a  
pictorial representation of the  4-intervals question construction in the proof of Theorem \ref{th:generalized-questions}.

\begin{figure*}[h]
	\centering \includegraphics[width=\textwidth]{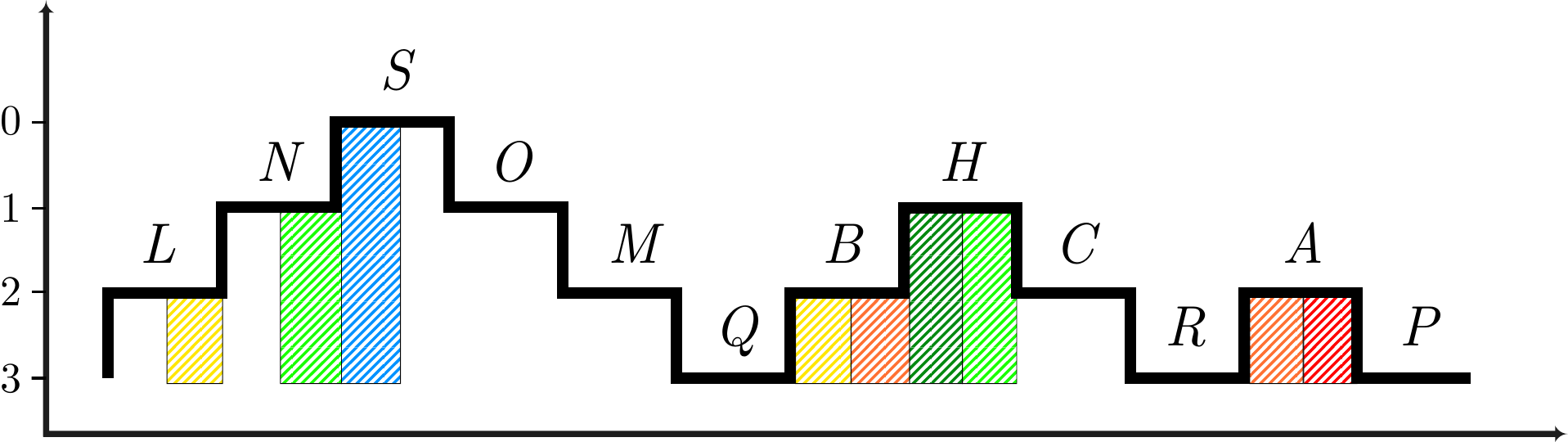}
	\caption{A well-shaped state like in (\ref{eq:well shaped}) and the cuts of a 4 interval question.}
	\label{fig.well-shaped-questions}
\end{figure*}

\begin{figure*}[h]
	\centering \includegraphics[width=\textwidth]{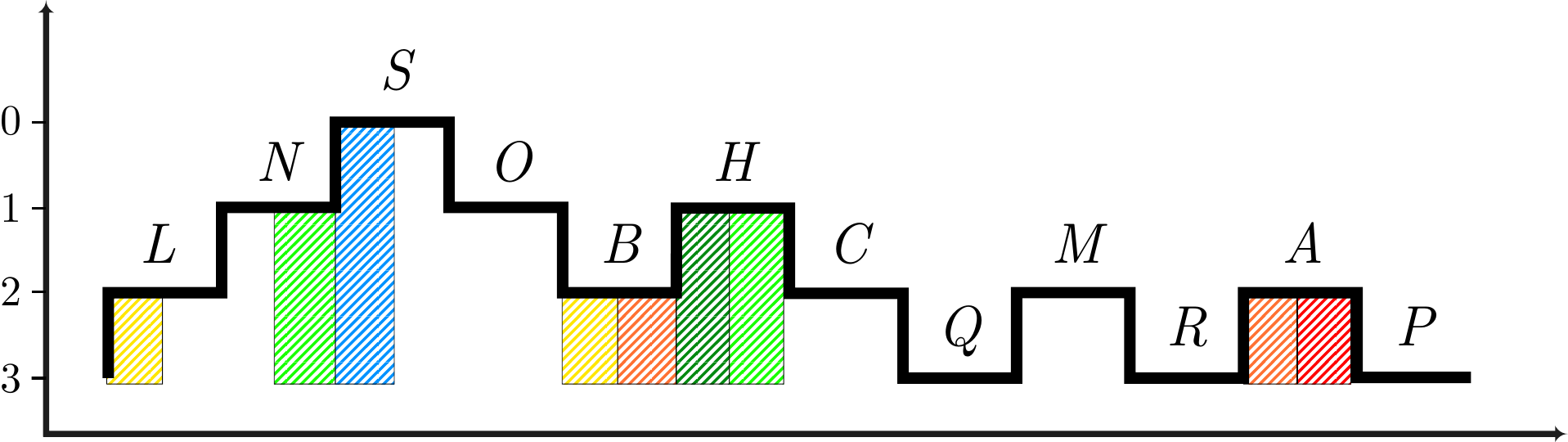}
	\caption{A well-shaped state like in (\ref{eq:well shaped2}) and the cuts of a 4 interval question.}
	\label{fig.well-shaped-questions2}
\end{figure*}

We assume the following relative order on the arcs' sizes. 
On level $1$ we assume $N \geq O$ and on level $2$ we assume $B \geq C$, $L \geq M$ and $A \geq M$.
The questions are depicted for both the feasible well-shaped states for a $3$ lies game.
Then on level $0$ the arc $S$ is split (the light blue question), on level $1$ either $H$ is split (the dark green question) or $N$ is split (the dark green and the light green question) and $H$ is covered entirely. On level $2$ one of the following holds, either $A$ 
is split (the red question) or $B$ is split (the orange and red question) and the mode $A$ is covered entirely, 
or $L$ is split (the yellow, orange and red question) and the mode $A$ is covered entirely.
In order to guarantee the well-shapeness preservation, note that in the questions depicted in figure \ref{fig.well-shaped-questions2} 
on level $2$ the mode $M$ is entirely uncovered, moreover the interval splitting arc $L$ is upward step-splitting in  
Figure \ref{fig.well-shaped-questions} and downward step-splitting  in Figure \ref{fig.well-shaped-questions2}.

\section{The non asymptotic strategy}
In this section we show how to employ the machinery of Lemma \ref{lemma:well-shapeness} to 
obtain an exact (non-asymptotic) characterization of the instances of the Ulam-R\'enyi game with $3$ lies on 
a serach space of cardinality $2^m$ that admit a perfect strategy only using $4$-interval questions. 
For this we will exploit the perfect strategies of \cite{negro1992ulam} and show that they can be implemented  using only $4$-interval questions. 
We are going to prove the following result.
\begin{theorem}\label{th:non-asymptotic}
For all $m \in  \mathbb{N} \setminus \{2,3,5\}$ in the game played over the search space $ \{0, \cdots, 2^m-1\}$ with $3$ lies, 
there exists a perfect 4-intervals strategy.
\end{theorem}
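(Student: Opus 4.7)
The plan is to combine Theorem \ref{th:generalized-questions} with the exact characterization of Negro and Sereno \cite{negro1992ulam}, who proved that a perfect strategy (of size $N_{\min}(2^m,3)$) in the classical subset-question Ulam-R\'enyi game with $3$ lies on $\{0,\ldots,2^m-1\}$ exists precisely when $m \notin \{2,3,5\}$. Fixing such an $m$ and one such strategy $\mathcal{S}$, I would show by induction on the depth in $\mathcal{S}$ that every question along the strategy can be realized by a $4$-interval question while preserving the well-shapedness of the underlying state; since the initial state $\alpha$ is trivially well-shaped (a single arc at level $0$, with all higher levels empty), the base case is immediate.

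At an internal node with a well-shaped state $\sigma$ of type $(a_0,b_0,c_0,d_0)$ and character $q$, perfectness of $\mathcal{S}$ combined with Proposition \ref{proposition:volume_properties} and Lemma \ref{lemma:balance} forces the question $Q$ prescribed by $\mathcal{S}$ to be balanced, and hence in particular $\min\{a_3^{Q},a_3^{\overline{Q}}\} \le \lfloor d_0/2\rfloor \le \lceil 2d_0/3 \rceil$. Replacing $Q$ by $\overline{Q}$ if needed (which just swaps the two children and does not affect perfectness), and normalizing the choice of elements on each of the lower levels so that the smaller side of the partition is selected, I would reduce to a question whose type meets all four admissibility bounds $a\le a_0$, $b\le\lceil b_0/2\rceil$, $c\le\lceil c_0/2\rceil$, $d\le\lceil 2d_0/3\rceil$ of Theorem \ref{th:generalized-questions}. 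Invoking that theorem then produces a $4$-interval implementation whose two children remain well-shaped, and the induction continues on each child.

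The induction is terminated as soon as the current state $\sigma$ satisfies $\sum_{i=0}^{2}|\sigma^{-1}(i)|\le 1$: at that point I would stop following $\mathcal{S}$ and appeal to Proposition \ref{proposition:end_game} to finish the search using $ch(\sigma)$ additional $1$-interval (hence $4$-interval) queries. Summing up the depth along any root-to-leaf path yields exactly $N_{\min}(2^m,3)$, giving the desired perfect $4$-interval strategy for every $m\notin\{2,3,5\}$.

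The main obstacle I expect is justifying the normalization step in the second paragraph: while swapping $Q$ with $\overline{Q}$ is harmless, redistributing which specific elements of $\sigma^{-1}(i)$ are placed on the ``yes'' side leaves the \emph{type} $\tau(\sigma_b)$ of each child invariant but may alter its geometric arrangement, and we must be sure that a perfect strategy still exists below it. The clean way through is the observation---already implicit in Spencer's and Negro--Sereno's analyses---that the existence of a perfect continuation at a state depends only on the type $\tau(\sigma_b)$, since both the character and the volume recursion of Proposition \ref{proposition:volume_properties} are functions of the type alone. Granted this, $\mathcal{S}$ can be replayed on the normalized child, the inductive invariant is preserved, and Theorem \ref{th:non-asymptotic} follows.
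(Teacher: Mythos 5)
Your proposal breaks down at the ``normalization'' step: you cannot, in general, replace the Negro--Sereno question $Q$ by $\overline{Q}$ (or pick ``the smaller side'' independently at each level) and land inside the admissibility region of Theorem~\ref{th:generalized-questions}. A single question must satisfy \emph{all four} bounds $a \le a_0$, $b \le \lceil b_0/2\rceil$, $c \le \lceil c_0/2\rceil$, $d \le \lceil 2d_0/3\rceil$ simultaneously, and these constraints trade off against each other between $Q$ and $\overline{Q}$: choosing the side with the smaller $d$ may push $b$ or $c$ above its ceiling. Concretely, the Negro--Sereno strategy (as replayed in Table~\ref{tab:lemma6}, $m=8$) reaches a well-shaped state of type $(1,4,14,40)$ and asks a question of type $[1,1,5,36]$; here $d=36 > \lceil 2\cdot 40/3\rceil = 27$, while the complementary type $[0,3,9,4]$ has $b=3 > \lceil 4/2\rceil = 2$. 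So neither option meets the hypotheses of Theorem~\ref{th:generalized-questions}, and the invocation of that theorem in your second paragraph fails. Relatedly, the claim that perfectness ``forces $Q$ to be balanced'' is not correct: balance (Lemma~\ref{lemma:balance}) is a sufficient, not a necessary, condition for a character drop, and the exact strategies of Guzicki and Negro--Sereno deliberately use unbalanced splits.

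The paper closes this gap by proving a family of specialized variants of Theorem~\ref{th:generalized-questions} (Lemmas~\ref{th:specialized-guzicki}, \ref{th:specialized-YES1}, \ref{th:specialized-YES2}, \ref{th:specialized-lemma5}) whose defining feature is that they relax the top-level constraint to $d \le d_0$, at the price of restricting the lower-level entries to the very specific types $(0,\cdot,\cdot,\cdot)$, $(1,b_1,c_1,\cdot)$, $(1,1,c_2,\cdot)$, $(1,0,3,\cdot)$ that actually occur along the Negro--Sereno tree. It then introduces the notion of a $4$-interval nice state, verifies an exhaustive table of small $0$-typical exceptional types, and handles the first three levels of the tree (via Lemma~\ref{Corollary:Lemma7} and the ad hoc Tables~\ref{tab:m1}, \ref{tab:lemma6}) separately. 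Your high-level idea---replay Negro--Sereno and implement each question with $4$ intervals while maintaining well-shapedness---is exactly the paper's strategy, but without the extra specialized lemmas the simulation cannot proceed past the first few questions.
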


\noindent
{\bf Some technical lemmas.}
We start by proving some variants of Theorem \ref{th:generalized-questions} that can be employed in particular cases where 
Theorem \ref{th:generalized-questions} does not directly apply.

\begin{lemma}\label{th:specialized-guzicki}
Let $\sigma$ be a well-shaped state of type $\tau (\sigma) = (0,b_0,c_0,d_0) $. 
For all integers 
$\displaystyle{0 \leq  b \leq \lceil \frac{1}{2} b_0 \rceil
\quad 0 \leq  c \leq \lceil \frac{1}{2} c_0 \rceil  
\quad 0 \leq  d \leq d_0 
}$
there exists a $4$-interval question $Q$ of type $|Q| = [0,b,c,d]$ that can be asked in state $\sigma$ and such 
that both the resulting "yes" and "no" states are \emph{well-shaped}.
\end{lemma}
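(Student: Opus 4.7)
The plan is to follow the level-by-level construction used in the proof of Theorem \ref{th:generalized-questions}, with the modifications needed to exploit the hypothesis $a_0 = 0$ in order to cover the enlarged range of $d$. Since the only difference with the hypotheses of Theorem \ref{th:generalized-questions} is the upper bound on $d$ (which is now $d_0$ instead of $\lceil \frac{2}{3} d_0 \rceil$), the proof naturally splits into two cases according to the value of $d$.

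For $d \leq \lceil \frac{2}{3} d_0 \rceil$, the claim is immediate from Theorem \ref{th:generalized-questions} applied with $a = 0$ and $a_0 = 0$. For $d > \lceil \frac{2}{3} d_0 \rceil$, I would first try a complement reduction: if $b \geq \lfloor b_0/2 \rfloor$ and $c \geq \lfloor c_0/2 \rfloor$, then the complementary question $\overline{Q}$ has type $[0, b_0 - b, c_0 - c, d_0 - d]$, where each value satisfies the bounds of Theorem \ref{th:generalized-questions} (in particular $d_0 - d < \lfloor d_0/3 \rfloor \leq \lceil \frac{2}{3} d_0 \rceil$). Applying the theorem to produce $\overline{Q}$ and then complementing yields the desired $Q$; well-shapeness is preserved because the yes/no labels are simply swapped.

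The remaining regime is $d > \lceil \frac{2}{3} d_0 \rceil$ together with $b < \lfloor b_0/2 \rfloor$ or $c < \lfloor c_0/2 \rfloor$. Here I would rerun the same level-by-level construction of Theorem \ref{th:generalized-questions}, but exploit the fact that the interval $I^{(0)}$ (originally anchored at $s^+$ to cover part of $S$) is now empty and can be repurposed as a free interval. Concretely, I would use it as a dedicated interval placed on the third level-3 arc $U$ (the saddle not adjacent to the mode $A$ at level 2), which was unreachable by the extensions of $I^{(2)}$ and $I^{(3)}$ in the original proof; because either $b$ or $c$ is strictly below $\lfloor b_0/2 \rfloor$ or $\lfloor c_0/2 \rfloor$ respectively, the remaining three intervals still suffice to meet the level-1 and level-2 type counts, exactly as in Theorem \ref{th:generalized-questions}.

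The main obstacle lies in this last regime: one must verify that the conditions of Lemma \ref{lemma:well-shapeness} remain satisfied once the freed interval is placed on $U$. In particular, the splitting or entire-covering of $U$ at level 3 induces new step/saddle/mode patterns on the adjacent level-2 arcs, and one must ensure that at levels 1 and 2 exactly one of the sub-cases (b)(i)--(iv) continues to hold, possibly adjusting how a neighboring level-2 arc is split or covered to compensate. The level-0 condition is vacuous since $S$ is empty and therefore trivially unsplit and uncovered, which is precisely what makes room for repurposing the interval at level 3.
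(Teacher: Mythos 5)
Your Case 3 contains the right idea, and it is in fact the paper's entire proof: since $a_0 = 0$, the slot $I^{(0)}$ that Theorem \ref{th:generalized-questions} reserves for level 0 is freed, levels 1 and 2 are handled with at most three intervals (exactly as in the theorem, but shifted by one index), and the remaining interval is used to reach the third level-3 arc that the original construction could not cover, so that all of level 3 becomes accessible and $d$ may range up to $d_0$. Where you go astray is in believing this only works in a residual regime. The availability of a free interval is a consequence of $a_0 = 0$ alone and does not depend on $b$ or $c$ being strictly below $\lfloor b_0/2\rfloor$ or $\lfloor c_0/2\rfloor$; the paper's level-1 and level-2 analysis shows three intervals always suffice for those levels under the stated bounds. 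So your case split is an artifact, and the justification ``because either $b$ or $c$ is strictly below $\lfloor \cdot/2\rfloor$, the remaining three intervals still suffice'' is not the actual reason three intervals suffice.

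Your Case 2 has a concrete gap. You propose to build a $4$-interval question $\overline{Q}$ of type $[0, b_0-b, c_0-c, d_0-d]$ via Theorem \ref{th:generalized-questions} and then ``complement'' it to obtain $Q$. But the complement of a union of four intervals in the linear universe $\{0,\dots,2^m-1\}$ is, in general, a union of up to five intervals; nothing in the theorem's construction guarantees that the complement is still a $4$-interval question. The paper sidesteps this issue in its other uses of complementation (it asks whichever of $Q$, $\overline{Q}$ has the admissible type, rather than literally complementing a constructed question), and here it avoids complementation altogether by the unconditional free-interval argument. If you drop Cases 1 and 2 and run your Case 3 construction without the extraneous side condition — then verify Lemma \ref{lemma:well-shapeness} level by level as the paper does — you recover the intended proof.
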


\begin{proof}

The proof structure is analogous to the proof of Theorem \ref{th:generalized-questions}
\begin{enumerate}
\item[Level $1$] By the assumption $b \leq \lceil \frac{1}{2} b_0 \rceil $ and definition of $H$ as the greatest mode of level $1$ it follows that $b \leq |H|$.
Then there exists $h^\ast$ in $H$ such that the sub-arc $H^\ast \subseteq H$ between $h^\ast$ and $h^+$ satisfies 
$|H^*| = b$ and we can cover it with one interval $I^{(0)}$ with a boundary at $h^{+}$. Then we have ${\cal E}(1) \supseteq \{ h^+\}.$

Summarising, we can cover the $b$ elements on level $1$  with at most one interval and
guarantees  that the boundaries of these intervals include $h^+.$ 

Moreover the interval $I^{(0)}$ on arc $H$ is a mode-splitting interval.

\item[Level $2$] 
As showed in Theorem \ref{th:generalized-questions}, we can cover the $b$ elements on level $1$ and the $c$ elements on level $2$, with at most three intervals.
Moreover, if, proceeding as in Theorem \ref{th:generalized-questions} we only use two intervals, $I^{(0)}, I^{(1)},$ (and set $I^{(2)} = \emptyset$) we also guarantee that the boundaries of these intervals include $a^+$. 
Otherwise, if we use three intervals, we have that the boundaries of those intervals include $a^+, a^-$ and exactly one between $e^+, z$.

\item[Level 3]
Let us denote by $W, U$ the two arcs at level 3 which are different from $A^{(3)}.$
We now argue by cases:
\begin{enumerate}
\item $I^{(2)} = \emptyset.$
We show how  to cover up to $d_0$ elements on level 3. Extending $I^{(1)}$ we can cover as much as we need from the arc $A^{(3)}$, 
after that we can use the two remaining intervals $I^{(2)}$ and $I^{(3)}$ to cover, respectively $W$ and $U$.
This guarantees that with at most four intervals $I^{(0)}, \dots I^{(3)}$ we have a question of the desired type. 
\item $I^{(2)} \neq \emptyset.$
Then, by the observations above, both the boundaries include $a^+$ and $a^-$. As before, Extending $I^{(1)}$ we can cover as much as we need from the arc $A^{(3)}$ and the other neighbouring arc of $A$, say $W$. Finally, we can use the remaining interval $I^{(3)}$ to cover the last uncovered arc $U$. 
\end{enumerate}
\end{enumerate}

In all cases
the resulting 4-interval question satisfies the Lemma \ref{lemma:well-shapeness} 
conditions, which guarantees that both resulting states are well-shaped.
\qed
\end{proof}%

\begin{lemma}
\label{th:specialized-YES1}
Let $\sigma$ be a well-shaped state of type $\tau (\sigma) = (1,b_1,c_1,d_1) $. 
For all integers  
$\displaystyle{\lfloor \frac{1}{4} c_1 \rfloor \leq  c \leq \lceil \frac{1}{2} c_1 \rceil 
\quad 0 \leq  d \leq d_1} $
there exists a $4$-interval question $Q$ of type $|Q| = [1,1,c,d]$  that we can ask in state $\sigma$ and such 
that both the resulting "yes" and "no" states are \emph{well-shaped}.
\end{lemma}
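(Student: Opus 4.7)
The plan is to mirror the level-by-level strategy of Theorem \ref{th:generalized-questions} and Lemma \ref{th:specialized-guzicki}, building the four intervals of $Q$ while tracking the conditions of Lemma \ref{lemma:well-shapeness}. The distinguishing feature here is that $a = a_0 = 1$ forces covering the unique element of the level-$0$ mode $S$, and $b = 1$ forces covering exactly one element on level $1$; this makes the interval choices on levels $0$ and $1$ essentially rigid up to a binary choice of direction.

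First, I would take $I^{(0)}$ to consist of $S$ together with exactly one element of a neighbouring level-$1$ step arc (say, the element of $N$ adjacent to $s^+$). This single interval simultaneously accounts for $a=1$ and $b=1$, and it is mode-covering at level $0$ and upward-step-splitting at level $1$ in the sense of Proposition \ref{prop:question_effect}. Following the case analysis of Theorem \ref{th:generalized-questions}, I would then cover $c$ elements on level $2$ by branching on whether $c \leq |A|$, $|A| < c \leq |A| + |A^{(2)}|$, or $c > |A| + |A^{(2)}|$: respectively, splitting the mode $A$; entirely covering $A$ and splitting $A^{(2)}$; or entirely covering $A$ and $A^{(2)}$ and then splitting a third level-$2$ arc. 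The lower bound $c \geq \lfloor c_1 / 4 \rfloor$ enters precisely to guarantee that enough level-$2$ elements are available for the chosen covering/splitting pattern to satisfy clauses (b) (and, when needed, (c)) of Lemma \ref{lemma:well-shapeness}, so that the required mode-coverings and saddle/step-splittings can be paired up.

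For level $3$, since $0 \leq d \leq d_1$, we are allowed to cover anywhere from no elements to all of level $3$. Here I would follow the closing case analysis of Theorem \ref{th:generalized-questions}: extend previously chosen intervals to reach the level-$3$ arcs $P, Q, R$ through the boundaries already placed in $\mathcal{E}(2)$, and if one interval remains unused assign it to cover entirely whichever level-$3$ arc has not yet been reached. The structural fact used here, established in the proof of Theorem \ref{th:generalized-questions}, is that each of the three level-$3$ arcs has an extremity adjacent to a boundary in $\mathcal{E}(2)$ (or at least one such arc is entirely spanned by boundaries in $\mathcal{E}(2)$), which makes covering any $d \in [0, d_1]$ feasible within the four-interval budget.

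The main obstacle, and the reason this statement is carved out as a separate lemma, is the edge case $a_0 = a = 1$: Lemma \ref{lemma:well-shapeness} as stated assumes level $0$ contains a step or saddle to pair with the mode-covering of $S$, which cannot happen when $S$ is the only level-$0$ arc. I would therefore need to verify directly, via Proposition \ref{prop:question_effect}, that the combined action of $I^{(0)}$---simultaneously mode-covering at level $0$ and upward-step-splitting at level $1$---yields $\delta$-contributions on levels $0$ and $1$ that telescope correctly in (\ref{eq:well-shapeness-condition-yes})--(\ref{eq:well-shapeness-condition-no}) when combined with the bookkeeping at the mode $H$. Establishing this combined-interval identity, and checking that the lower bound $c \geq \lfloor c_1/4 \rfloor$ is exactly what keeps the level-$2$ arc count in balance after the $H$-contribution is absorbed, is the technical crux of the proof.
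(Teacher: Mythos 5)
There is a genuine gap, and it is not where you think it is. You correctly notice that for $a = a_0 = 1$ the level-$0$ mode $S$ must be mode-covered, and you flag this as "the main obstacle." But the paper's proof shares that feature (its $I^{(0)}$ also mode-covers $S$), so this cannot be what the lemma is really about. The actual crux is \emph{where} the single level-$1$ element lives, and your choice is the wrong one.

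You place the level-$1$ element into $I^{(0)}$, so it is the element of $N$ adjacent to $s^+$. That makes $I^{(0)}$ \emph{upward}-step-splitting on $N$ (you say so yourself). By Lemma~\ref{lemma:well-shapeness}(b)(ii), an upward step-split at level $1$ must be paired with a mode-covering at level $1$, i.e.\ $H$ must be entirely in $Q$. But for the lower part of the range, $c = \lfloor c_1/4\rfloor$, your level-$2$ intervals sit inside $A$ (and possibly $A^{(2)}$) and never touch $H$. Condition (b) then fails at level $1$, and the resulting states are not well-shaped. Concretely: in the yes-state your $n_k$ is flanked by $S$ (level $0$) and by $N\setminus n_k$ (pushed to level $2$), so $n_k$ becomes a \emph{step}, not a mode, at level $1$; carrying out the arc bookkeeping one finds arc counts $1,2,4,3$ rather than the required $1,3,5,3$. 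The paper avoids this by attaching the level-$1$ element to a level-$2$ arc rather than to $S$: it fixes $U$ to be the smallest level-$2$ arc distinct from $A$ and $A^{(2)}$, covers $U$ entirely with $I^{(1)}$, and then extends $I^{(1)}$ one step into $U$'s level-$1$ neighbour $V$. This is \emph{downward}-step-splitting on $V$, which pairs with the \emph{uncovered} mode $H$ by clause (b)(iii), precisely the situation that occurs when $c$ is small. That is also why the paper processes level $2$ before level $1$, and explicitly remarks that the proof scheme differs from that of Theorem~\ref{th:generalized-questions}; your proposal reverts to the theorem's level-ordered scheme and loses this flexibility.

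Two further consequences of missing the $U$-construction: (i) your claim that "$c \geq \lfloor c_1/4\rfloor$ enters to guarantee enough level-$2$ elements are available" is vague and, in your scheme, has no operative role; in the paper the bound is used once, and exactly, to conclude $|U|\leq\lfloor c_1/4\rfloor\leq c$, so that covering all of $U$ never overshoots the budget $c$; (ii) the set $\mathcal{E}(2)$ of boundaries you inherit from the level-$2$ stage is different from the paper's, so the level-$3$ argument needed to reach any $d\in[0,d_1]$ (strictly stronger than the theorem's $d\leq\lceil\tfrac23 d_0\rceil$) would need to be redone; the appeal to "the closing case analysis of Theorem~\ref{th:generalized-questions}" does not cover the range $d>\lceil\tfrac23 d_0\rceil$ and cannot be cited as-is.
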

\begin{proof}

Let $U$ be the smallest arc on level 2, 
different from $A$ and $B$. Then, assuming that $|A| \geq \lfloor \frac{1}{4} c_1 \rfloor$, 
we have that $ |U| \leq \lfloor \frac{1}{4} c_1 \rfloor $. 
This is true because, by the minimality of $U$ we have  
$\displaystyle{|U| \leq  \frac{\lfloor(1-\frac{1}{4})\rfloor }{3}c_1 \leq \lfloor \frac{1}{4} c_1 \rfloor.}$

Let $V$ be the neighbouring arc of $U$ at level 1. 
We denote with $v^-$ the boundary between $U$ and $V$ and with 
$u^+$ the other boundary of $U$.
The existence and uniqueness of $u^+$ is guaranteed by the definition of $U$ 
that excludes that  it is a saddle.

This proof has a different scheme with respect to the proof structure of Theorem \ref{th:generalized-questions}. 
In fact, the interval used to accommodate the element on level 1, is defined after the evaluation of the intervals at level 2.

\begin{enumerate}
	\item[Level $0$] Is treated as in Theorem \ref{th:generalized-questions}, then we have one element of level 0 that is accommodated by interval $I^{(0)}$. The interval on arc $S$ is a mode-covering interval.

	\item[Level $2$] We argue by cases 
	\begin{enumerate}
		\item $ \lfloor \frac{1}{4} c_1 \rfloor \leq c \leq |A|.$ 
		Then, there exists $a^\ast$ in $A$ such that letting $A^\ast$ be the sub-arc of $A$ between $a^\ast$ and $a^{+}$ we have $|A^\ast| + |U| = c$.
		
		We can cover it using the interval $I^{(1)} = U $ with one boundary in $u^{+}$, and with one other interval $I^{(2)} = A^\ast$ with one boundary in $a^{+}$. 
		
		\item $ |A| < c  \leq \lceil \frac{1}{2} c_1 \rceil. $ 
		Again we argue by cases on the size of the mode $H$ at level 1. In particular, we have that either the mode $H$ contains at least one element, or the mode $H$ is empty.
		In the former case, we can proceed as in the proof of Theorem \ref{th:generalized-questions} guaranteeing that the boundary $h^{+} $ is included in the interval $I^{(1)}$. Moreover, if we use tree intervals, we guarantee that the interval $I^{(2)}$ has the boundaries $a^+$ and $a^-$, and if we use four intervals, we guarantee that the intervals $I^{(1)}$ and $I^{(3)}$ have exactly one boundary between $e^+$ and $z$.
		In the latter case, we can argue by cases
		\begin{itemize}
			\item $ \lfloor \frac{1}{4} c_1 \rfloor \leq c \leq |A| + |U|.$ Then, there exists $x^* \in U$ such that  letting $X^{(2)}$ be the sub-arc of $U$  
			between $x^*$ and $u^{-}$, we have $|X^{(2)}| + |A| = c$ and we can cover these $c$ elements using the interval 
			$I^{(1)}$ to covers $X^{(2)}$ and having $I^{(2)} = A.$ In this case we have that the boundaries of $I^{(2)}$ are both $a^+$ and 
			$a^-.$
			\item $c > |A| + |A^{(2)}|.$ Let $E$ denote the largest arc on level 2 not in $\{U, A\}.$ Note that, since the mode at level 1 is empty we can glue together the arcs $B$ and $C$ flanking the mode. Then, we have that 
			$|A| + |U| + |E| \geq |Y|$ where $Y$ is the arc on level $2$ not in $\{A,U,E\}.$  This is true because, $E \geq Y$.
			Therefore, there exists $e^\ast \in E$ such that letting   
			$E^\ast$ be the sub-arc of $E$ between $e^\ast$ and $e^+$ we have  $|A| + |U| + |E^\ast |= c$ and 
			we can cover the corresponding set of elements by: (i)  using $I^{(1)}$ to cover the whole $U$; (ii) defining 
			$I^{(2)} = A$; defining a fourth interval $I^{(3)} = E^*.$ Therefore, we have that the boundaries of $I^{(2)}$ are both $a^+$ and $a^-$ and, the 
			boundary of $I^{(3)}$ includes $e^+,$ and the boundary of $I^{(1)}$ is the boundary $u^+$ 
			of the arc $U$ where this joins its adjacent arc at level 3.  
			
		\end{itemize}

	\end{enumerate}
	
	Summarizing, we can cover one element on level $0$ and the $c$ elements of level $2$ with at most four intervals. 
	More precisely, the main difference with the analysis on Theorem \ref{th:generalized-questions}, is that if we only use three intervals, $I^{(0)}, I^{(1)}, I^{(2)},$ (and set $I^{(3)} = \emptyset$),  we also guarantee that the boundaries of these intervals include $a^+$ and one between $a^-$ and $u^+$. Conversely, if we use four intervals, we have that the boundaries of these intervals include $a^+, a^-$ and exactly one between $e^+, z$.
	Thus, either $\{a^+, u^+\} \subset {\cal E}(2)$ or $\{a^+, a^-\} \subset {\cal E}(2) \subset \{a^+, a^-, z, e^+\}$.
	Indeed, if the mode $H$ is empty, and we are using four intervals, we have that the boundaries of these intervals include $a^+, a^-, e^+ $ and $u^+$. Thus we have that $\{a^+, a^-, e^+, u^+\} \subset {\cal E}(2)$. 

	Moreover, the analysis of the splitting intervals is the same as the cases on Theorem \ref{th:generalized-questions}, in addition we have that in the first case, the interval $I^{(1)}$ covers the arc $U$ and the interval $I^{(2)}$ on arc $A^\ast$ is a mode-splitting interval.
	
	\item[Level $1$]
	
	From the previous arguments, we have that one boundary between $u^-$ and $h^+$ is included in the interval $I^{(1)}$. Then, if $I^{(1)}$ include the boundary $u^-$ then we can extend it from $u^-$ up to $v^\ast$ in $V$ such that letting $X^\ast$ be a sub arc of $V$ between $v^\ast$ and $u^-$ with $|X^\ast| = 1.$ Otherwise, if the boundary included in $I^{(1)}$ is $h^+$ then we can extend $I^{(1)}$ from $u^-$ up to $h^\ast$ in $H$ such that letting $X^\ast$ be a sub arc of $H$ between $h^\ast$ and $u^-$ with $|X^\ast| = 1.$
	Then, we can cover one element on level $0$, one element on level $1$ and the $c$ 
	elements of level $2$ with at most four intervals. And the boundaries at level $3$ remain unchanged.
	Moreover, the interval  $I^{(1)}$ covers the arc $X^\ast$ and is mode-splitting if the arc $X$ is the mode $H$, otherwise is downward-step splitting.
	
	\item[Level 3] Let us denote by $W, Z$ the two arcs at level 3 which are different from $A^{(3)},$ with $|W| \geq |Z|.$ 
	\begin{enumerate}
		\item $ d \leq  |A^{(3)}| + |W|$.  
		This case is handled like in Theorem \ref{th:generalized-questions}.
		
		\item $|A^{(3)}| + |W| < d $.
		There exists a sub-arc $Z^\ast$ of $Z$ such that $|Z^\ast| + |W| + |A^{(3)}| = d.$ 
		As a result of the construction used on the previous level, we have that ${\cal E}(2)$ has at least two boundaries. 
		One boundary is one end of the arc $A^{(3)}$.
		The other boundary in ${\cal E}(2)$ is one end of the arc $X \in \{A^{(3)},W,Z\}$. 
		We have two sub-cases:
		\begin{itemize}
			\item $I^{(3)} = \emptyset.$ Then we have that either $\{a^+,a^-\} \subseteq {\cal E} (2)$ or $\{a^+,u^-\} \subseteq {\cal E} (2)$. 
			Moreover, either there is an arc on level $3$ with both ends at a boundary in ${\cal E}(2)$ or two arcs on level $3$ with 
			 a boundary in ${\cal E}(2).$
			In the former case, we can extend the interval $I^{(2)}$ to cover the arcs $X$,that is the arc $A^{(3)}$, including the interval $I^{(1)}$. 
			Then, redefining the interval $I^{(1)}$ as $I^{(1)} = W$ and defining $I^{(3)} = Z^\ast,$ 
			we guarantee that the four intervals $I^{(0)}, \dots, I^{(3)}$ define a question of the desired type.
			In the latter case, we can extend the interval $I^{(2)}$ to cover the arcs $A^{(3)}$ and $X$, that is different from $A^{(3)}$. 
			Then, defining $I^{(3)} = Z^\ast,$ it guarantees that the four intervals $I^{(0)}, \dots, I^{(3)}$ define a question of the desired type.

			\item $I^{(3)} \neq \emptyset.$ Then, by the observations above, as a result of the construction on level 2, either there is an 
			arc on level $3$ with both ends at a boundary in ${\cal E}(2)$ or each arc on level $3$ has a boundary in ${\cal E}(2).$ In the latter case, we can clearly extend the intervals $I^{(2)}$ and $I^{(3)}$ in order to cover $d$ elements on level 3.
			In the former case, we can extend the interval $I^{(2)}$ to cover the arcs $X$ and $A^{(3)}$, including the interval with the boundary at one and of $X$, say $I^{(3)}$. Then, redefining the interval $I^{(3)}$ as $I^{(3)} = Z^\ast,$ it guarantees that the four intervals $I^{(0)}, \dots, I^{(3)}$ define a question of the desired type.
		\end{itemize}	
\end{enumerate}
\end{enumerate}
Since in all the cases, the conditions of Lemma \ref{lemma:well-shapeness} are satisfied, the resulting states are well shaped.
\qed
\end{proof}

\begin{lemma}
\label{th:specialized-YES2}
Let $\sigma$ be a well-shaped state with type $\tau (\sigma) = (1,1,c_2,d_2) $, 
for every integer $0 \leq d \leq d_2,$
there exists a $4$-interval  question $Q$ of type $|Q|   = [1,0,2,d]$ such that 
the resulting "yes" and "no" states are \emph{well-shaped}.
\end{lemma}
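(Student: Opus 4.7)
My plan is to follow the level-by-level strategy of Theorem~\ref{th:generalized-questions} and Lemma~\ref{th:specialized-YES1}, explicitly constructing the four intervals of $Q$. A useful preliminary observation is that well-shapeness of $\sigma$ with $\tau(\sigma)=(1,1,c_2,d_2)$ forces the unique level-$1$ element to lie inside the mode $H$: otherwise $H$ would be empty and the minimality of $\mathcal{L}^{\sigma}$ would merge $B$ with $C$, destroying the required count of $5$ arcs at level~$2$. So I may assume $|H|=1$ and $|N|=|O|=0$.

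First I would build the interval $I^{(0)}$ containing the unique element of $S$: since $a=1=|S|$, this interval mode-covers $S$, and since $b=0$ with $|H|=1$, it must avoid $H$. To accommodate the required two elements at level~$2$, I extend $I^{(0)}$ through the empty bridges $N$ and $O$ so that it includes one element from each of the adjacent level-$2$ arcs $L$ and $M$ (with sub-case distinctions when $|L|$ or $|M|$ is too small, taking both level-$2$ elements from the same side). This single interval is then mode-covering at $S$ and upward step-splitting at $L$ and $M$.

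The remaining three intervals $I^{(1)},I^{(2)},I^{(3)}$ are used to place the $d$ elements at level~$3$ among the saddles $P,Q,R$ by a case analysis on how $d$ splits among them, essentially as in the end-game constructions of Theorem~\ref{th:generalized-questions} and Lemma~\ref{th:specialized-YES1}. Their boundaries are chosen so that neither $H$ nor any undesired level-$2$ element enters $Q$, and so that they inherit, whenever possible, boundaries already fixed by $I^{(0)}$.

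The main obstacle is verifying well-shapeness of both resulting states, because mode-covering at level~$0$ on $S$ does not satisfy any of the cases (i)--(iv) of Lemma~\ref{lemma:well-shapeness}, and because the complementary question $\overline{Q}$ necessarily mode-covers $H$ at level~$1$. I would therefore work directly from the arc-count identities (\ref{eq:well-shapeness-condition-yes}) and (\ref{eq:well-shapeness-condition-no}). The delicate step is the no state: when $\tilde s$ moves to level~$1$ it seems to merge with the empty bridges $N,O$, so a naive inventory is short of one arc at each of levels $0$ and $1$. The compensation comes from combining the action of $\overline{Q}$ at $H$ with the specialized top-level dynamics at $e=3$, where saddle-coverings behave with the opposite sign to the generic Proposition~\ref{prop:question_effect}(e) (since elements pushed to level~$4$ are removed from the support) and hence alter the arc counts at levels $2$ and $3$ in the required direction. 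A careful level-by-level accounting of these contributions, in the style of the proof of Lemma~\ref{th:specialized-YES1}, then restores the arc counts $(1,3,5,3)$ for both $\sigma^{yes}$ and $\sigma^{no}$.
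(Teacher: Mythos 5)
Your construction differs from the paper's in a way that creates a real problem. The paper's proof does not touch $L$ or $M$: it places the two level-$2$ elements inside a \emph{single} non-empty level-$2$ arc $E$ that is adjacent to some level-$3$ arc $U$, using a separate interval $I^{(1)}$ (and, when $|E|<2$, a second arc $F$ and interval $I^{(2)}$), and then pushes the level-$3$ coverage through the boundaries $e^+,f^+$. This way at most one arc is split at level~$2$, and the arc is split at a boundary adjacent to level~$3$, so the split is \emph{downward} step-splitting while the mode $A$ is either itself the split arc or is left completely uncovered. That is exactly what makes cases (i)/(iii) of Lemma~\ref{lemma:well-shapeness} applicable at level~$2$. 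In contrast, by extending $I^{(0)}$ through the empty $N,O$ into one element of $L$ \emph{and} one element of $M$, you split \emph{two} arcs at level~$2$, which already violates condition~(a) of Lemma~\ref{lemma:well-shapeness}; moreover both splits are at boundaries adjacent to level~$1$, i.e.\ \emph{upward} step-splits, which is again not one of the allowed configurations. You acknowledge you must fall back on the raw identities (\ref{eq:well-shapeness-condition-yes})--(\ref{eq:well-shapeness-condition-no}), but you do not actually carry out that bookkeeping, and the sign-reversal you invoke for saddle-coverings at the top level is not established anywhere in the paper (Proposition~\ref{prop:question_effect} treats level~$3$ like any other level and the paper's proofs never appeal to such a reversal). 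As it stands, your proposal leaves the claimed restoration of the arc counts $(1,3,5,3)$ for $\sigma^{yes}$ and $\sigma^{no}$ unproved, and the specific configuration you chose makes that restoration harder, not easier, than the paper's.

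One additional remark: your worry that $I^{(0)}$ mode-\emph{covers} $S$ (because $|S|=1$) and thus falls outside cases (i)--(iv) of Lemma~\ref{lemma:well-shapeness} is a reasonable observation. The paper calls $I^{(0)}$ ``mode-splitting'' here, implicitly treating the degenerate single-element mode as a split whose ``remaining part'' is the empty arc $S$ that stays in $\mathcal{L}^{\sigma}$; under that convention $\delta(S,I^{(0)},\cdot,\ell)=0$ and the lemma applies. If you want to avoid the convention, you must compensate the $-1$ from Proposition~\ref{prop:question_effect}(f) at levels $0$ and $1$ in the no-state explicitly, which your sketch does not do. Your structural observation that the unique level-$1$ element must lie in $H$ (so that $|N|=|O|=0$) is correct and is used implicitly by the paper as well, so that part is fine.
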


\begin{proof}
 Let $E$ be a non empty arc on level 2 and 
 let $e^+$ its boundary with $U \in \{P,Q,R\}$. We use one interval $I^{(0)}$ to cover one element from 
 the mode $S$. If $|E| \geq 2 $, we take an interval $I^{(1)}$ on the arc $E$, in particular there exists $e^\ast $ 
 in $E$ such that the sub arc $E^\ast \subseteq E$ with  $|E^\ast| = 2$ is  covered by $I^{(1)}$. 
 In order to cover $d$ elements on level 3, we extend the interval $I^{(1)}$ over $U$ to cover up to $|U|$ elements. 
 Then, we use the remaining two intervals $I^{(2)}, I^{(3)}$ to cover the remaining $d-|U|$ elements over the arcs $\{P,Q,R\}\setminus \{U\}$. 
 In this case, we have that the interval $I^{(0)}$ is a mode-splitting interval. The interval $I^{(1)}$ either is a mode-splitting interval or it is a downward step-splitting interval while a mode at level 2 is completely uncovered. The intervals $I^{(2)}$ and $I^{(3)}$ involves only arcs on level 3, thus they are not involved in the arc covering and spitting on the lower levels. The conditions of Lemma \ref{lemma:well-shapeness} are satisfied, then we have that both the resulting states are well shaped.
 \noindent
 Otherwise, $|E| = 1$, let $F$ be a non empty arc, different from $E$, on level 2 and let $f^+$ its boundary with $V \in \{P,Q,R\} \setminus \{U\}$. Then, we use
 the interval $I^{(1)}$ to cover entirely the arc $E$, and there exists $f^\ast \subseteq F$ such that the sub arc $F^\ast \in F$ is the arc spanning from $f^\ast$ to $f^+$ which has $|F^\ast| = 1$,  covered by an additional interval $I^{(2)}$. In order to cover $d$ elements on level 3, as before, we extend the interval $I^{(1)}$ over $U$ to cover up to $|U|$ elements and we extend the interval $I^{(2)}$ over $V$ to cover up to $|V|$ elements.
Finally, if necessary, the remaining interval $I^{(3)}$ is used  to include the remaining $d -|U| - |V|$ 
elements over the last arc $\{P,Q,R\} \setminus \{U,V\}$.
In this case, we have that the interval $I^{(0)}$ is a mode-splitting interval. The interval $I^{(1)}$ either is a mode-covering interval or it is a step-covering interval while the interval $I^{(2)}$ is either a mode-splitting interval or it is a downward step-splitting interval. If the interval $I^{(2)}$ is a downward step-splitting interval and the interval $I^{(1)}$ is a mode-covering interval, then there exists a completely uncovered mode. The interval $I^{(3)}$ involves only arcs on level 3, thus they are not involved in the arc covering and spitting on the lower levels. The conditions of Lemma \ref{lemma:well-shapeness} are satisfied, then we have that both the resulting states are well shaped. \qed
\end{proof}

\begin{lemma}
\label{th:specialized-lemma5}
Let $\sigma$ be a well-shaped state of type $\tau (\sigma) = (1,0,3,d_3) $. For every integer $0 \leq d \leq d_3$ there 
there exists a $4$-interval question $Q$ of type  $|Q| = [1,0,0,d]$ such that  the resulting "yes" and "no" states are \emph{well-shaped}.
\end{lemma}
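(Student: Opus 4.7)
The plan is to construct the $4$-interval question $Q$ explicitly, using one interval to cover the unique level-$0$ element of $S$ and up to three more intervals to distribute the $d$ level-$3$ elements over the three arcs $P,Q,R$. First I would extract structural consequences of $\sigma$ being well-shaped of type $(1,0,3,d_3)$: since $t_1 = 0$, the three level-$1$ arcs $N,O,H$ in $\mathcal{L}^\sigma$ are all empty and occur only as bridges, so the required $(1,3,5,3)$ arc count forces $|B|,|C|>0$ (otherwise $H$ would not be a forced bridge between $B$ and $C$) and, from the chain of forced bridges going around the cycle, $|P|,|Q|,|R|>0$. This structural rigidity, in the spirit of the case analysis used in the proof of Theorem~\ref{th:generalized-questions}, is what makes the construction possible.

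Next I would take $I^{(0)}$ to be an interval containing only the element of $S$, and then follow the Level~$3$ step in the proof of Theorem~\ref{th:generalized-questions}: since the three arcs $P,Q,R$ at level~$3$ are all non-empty and I have three free intervals $I^{(1)},I^{(2)},I^{(3)}$, every $d \in \{0,\ldots,d_3\}$ can be realised by assigning sub-arcs to these intervals, with boundaries chosen at the ends of the arcs in order to match the boundaries produced by $I^{(0)}$. In particular, when $d$ is large one can extend $I^{(0)}$ into the neighbouring level-$3$ arc through the empty bridging positions at levels $1$ and $2$, thus saving an interval.

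The main step, and the main obstacle, is verifying that both $\sigma_{yes}$ and $\sigma_{no}$ are well-shaped. Because $|S|=1$ precludes the mode-splitting case (b)(i) of Lemma~\ref{lemma:well-shapeness}, the verification is carried out directly by level-by-level analysis of equations~(\ref{eq:well-shapeness-condition-yes})--(\ref{eq:well-shapeness-condition-no}) together with Proposition~\ref{prop:question_effect}. On level $0$, the mode-covering contribution $\delta=-1$ of $I^{(0)}$ is compensated by the empty level-$1$ arcs on either side of $S$ lying in $\overline{Q}$, which play the role of a complementary mode-covering interval as in case (b)(iv). On levels $1$ and $2$ nothing is split, and because all non-empty level-$2$ arcs lie entirely in $\overline{Q}$ the contributions cancel pairwise. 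On level $3$ the three intervals $I^{(1)},I^{(2)},I^{(3)}$ are arranged so that at most one whole arc changes support status, preserving the count of three arcs at level $3$ in both branches.

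The hardest step is the careful bookkeeping on level $0$: the absence of mode-splitting forces one to exploit explicitly the bridging role of the empty level-$1$ arcs $N,O$ and the fact that $|B|,|C|>0$, in the same spirit as the handling of the unit-size mode $S$ in Lemma~\ref{th:specialized-YES2}. Once this level-$0$ compensation is established, the rest of the argument follows the same template as the constructions in Theorem~\ref{th:generalized-questions} and the preceding specialized lemmas.
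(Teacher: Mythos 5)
Your construction is the same as the paper's: use $I^{(0)}$ to cover the single level-$0$ element of $S$ and distribute the $d$ level-$3$ elements over $P,Q,R$ with the remaining three intervals, then conclude well-shapeness of both resulting states. The paper's own proof is in fact terser (it simply observes that $I^{(0)}$ is mode-covering, that $I^{(1)},\dots,I^{(3)}$ touch only level $3$, and asserts that the conditions of Lemma~\ref{lemma:well-shapeness} hold), so your extra remarks on the degeneracy caused by $|S|=1$ and the forced non-emptiness of the bridging arcs are a more detailed telling of essentially the same argument.
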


\begin{proof}

We use one interval $I^{(0)}$ to cover one element from the mode $A$. Then we use the remaining three intervals $I^{(1)} \dots I^{(3)}$ to cover $d$ elements over the remaining three arcs, respectively $P$, $Q$ and $R$. 
We have that the interval $I^{(0)}$ is a mode-covering interval, while the intervals $I^{(1)}, ..., I^{(3)}$ involves only arcs at level 3 that does not affect the splitting covering of the lower levels. The conditions of Lemma 2 are satisfied and the resulting states are well shaped. \qed
\end{proof}

\subsection{The proof of the main theorem}

\begin{definition}[nice and 4-intervals nice state] \label{def:4-interval-nice}
Following the a standard terminology in the area, we say that a state $\sigma$ is {\em nice} if  there exists a strategy {\cal S} of size $ch(\sigma)$ that is winning 
for $\sigma$. 
In addition we say that a state $\sigma$ is {\em 4-interval nice} if 
\begin{enumerate}
	\item $\sigma$ is well shaped.
	\item there exists a strategy $S$ of size $ch(\sigma)$ that is winning for $\sigma$ and only uses $4$-interval questions
\end{enumerate}
\end{definition}

A direct consequence of the above definition is that a state $\sigma$ is $4$-interval nice if it is well-shaped and, either it is a final state, 
or there is a $4$-interval question for $\sigma$ such that both the resulting $yes$ and $no$ states are $4$-interval nice.

With this definition we can state the following, which gives a sufficient condition for the claim of Theorem \ref{th:non-asymptotic}.
\begin{lemma}
Fix an integer $m \in \mathcal{N}$. If every well-shaped state of type  $(1, m, {m \choose 2}, {m \choose 3})$ is $4$-interval nice then 
there exists a $4$-interval question perfect strategy in the game with $3$ lies over the space of size $2^m$.
\end{lemma}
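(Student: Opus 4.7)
The plan is to decompose a perfect $4$-interval strategy into a \emph{reduction phase} of $m$ questions followed by a \emph{completion phase} of $N_{\min}(2^m,3)-m$ questions, invoking the hypothesis at the end of the reduction phase.

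First I would verify the algebraic consistency that aligns the two phases. For any state $\sigma$ of type $(1, m, \binom{m}{2}, \binom{m}{3})$, Vandermonde's convolution gives $w_q(\sigma)=\sum_{j=0}^3\binom{m}{j}\sum_{\ell=0}^{3-j}\binom{q}{\ell}=\sum_{i=0}^3\binom{m+q}{i}$, so by the very definition of $N_{\min}$ we have $ch(\sigma)=N_{\min}(2^m,3)-m$, and the $q$-volume of $\sigma$ at $q=ch(\sigma)$ is exactly $2^{-m}$ times the $N_{\min}(2^m,3)$-volume of the initial state $\alpha=(2^m,0,0,0)$. Thus $m$ perfectly balanced questions starting from $\alpha$ would bring the volume down to exactly that of the target state type.

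Next I would construct the reduction phase inductively. The first question can be the $1$-interval question $Q_1=[0,2^{m-1}-1]$, which is perfectly balanced on $\alpha$. Each subsequent question is chosen adaptively as a $4$-interval question so that (i) the character drops by exactly one and (ii) the intermediate state is ``on trajectory'' toward a well-shaped state of the target type, in the sense that its arc structure can be completed in the remaining reduction steps to the required $(1,3,5,3)$ arc-count profile on levels $0,1,2,3$. The local arc dynamics codified in Proposition \ref{prop:question_effect} and Lemma \ref{lemma:well-shapeness} give the tools to pick the cut points of each interval so as to realize this invariant. After the $m$-th question the state is well-shaped of type $(1,m,\binom{m}{2},\binom{m}{3})$, and the hypothesis hands us a $4$-interval perfect strategy of length $N_{\min}(2^m,3)-m$ to finish the game.

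The main obstacle is the explicit design of the reduction phase: one must show that a single $4$-interval question at each step simultaneously maintains balance and builds up the target arc structure, starting from a degenerate one-arc state rather than from an already well-shaped state. This is a combinatorial bookkeeping argument analogous in spirit to the proof of Theorem \ref{th:generalized-questions}, and is likely to require a careful case analysis on the size and parity of the universe; a few small values of $m$ may need to be handled separately, consistent with the exclusion of $m\in\{2,3,5\}$ in the statement of Theorem \ref{th:non-asymptotic}.
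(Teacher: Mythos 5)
Your high-level plan (a reduction phase of $m$ questions bringing the game from the initial state to a well-shaped state of type $(1,m,\binom{m}{2},\binom{m}{3})$, then invoking the hypothesis) is exactly the paper's plan, and your Vandermonde/character computation is a correct and useful sanity check. However, you leave the whole reduction phase as an acknowledged ``main obstacle,'' plan to build it ``adaptively'' with an invariant that intermediate states are merely ``on trajectory'' toward well-shapedness rather than actually well-shaped, and speculate that the construction may require case analysis and exceptional small $m$. That is precisely where the actual content of the lemma's proof lives, and the paper disposes of it much more cleanly than your sketch suggests.

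The missing observation is twofold. First, because the paper allows empty arcs, the initial state $(2^m,0,0,0)$ is \emph{already} well-shaped (one nonempty arc at level $0$, empty arcs filling out the $1,3,5,3$ profile), so there is no ``degenerate one-arc state'' to worry about and no gradual build-up of the arc profile is needed. Second, define $\sigma^{(j)} = (2^{m-j},\, j\,2^{m-j},\, \binom{j}{2}2^{m-j},\, \binom{j}{3}2^{m-j})$ for $j=0,\dots,m$. Every component of $\sigma^{(j)}$ for $j<m$ is an even integer (a multiple of $2^{m-j}\geq 2$), so the exact even-splitting question of type $\bigl[\tfrac{|\sigma^{-1}(0)|}{2},\tfrac{|\sigma^{-1}(1)|}{2},\tfrac{|\sigma^{-1}(2)|}{2},\tfrac{|\sigma^{-1}(3)|}{2}\bigr]$ always has integer entries satisfying the constraints of Theorem~\ref{th:generalized-questions}, which therefore guarantees a $4$-interval question for which \emph{both} the yes- and no-states are well-shaped and of type $\sigma^{(j+1)}$. (This is also where balance is automatic: an even split preserves the volume identity and drops the character by exactly one.) Iterating this $m$ times yields, along every branch, a well-shaped state of type $\sigma^{(m)}=(1,m,\binom{m}{2},\binom{m}{3})$ and the hypothesis finishes. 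There is no case analysis, no parity trouble, and no exceptional $m$ in this phase; the exclusion of $m\in\{2,3,5\}$ in Theorem~\ref{th:non-asymptotic} arises entirely from the \emph{hypothesis} failing (those target states are not nice), not from any failure of the reduction. Your attribution of those exceptions to the reduction phase is therefore a misdiagnosis of where the difficulty lies.
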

\begin{proof}
In the light of Definition \ref{def:4-interval-nice} we will show that if $(1, m, {m \choose 2}, {m \choose 3})$ is  $4$-interval nice then 
$(2^m, 0, 0, 0)$ is $4$-interval nice. 

We have the following claim directly following from Theorem \ref{th:generalized-questions}.

\noindent
{\em Claim}
For each $j = 0, 1, \dots m-1$ let $\sigma^{(j)} = (2^{m-j}, j 2^{m-j}, {j \choose 2} 2^{m-j}, {j \choose 3} 2^{m-j}).$ By Theorem Theorem \ref{th:generalized-questions} 
if $\sigma$ is a well-shaped state of type $\sigma^{(j)}$ then there is a $4$-interval question $Q$ of type 
$|Q| = [\frac{|\sigma^{-1}(0)|}{2},\frac{|\sigma^{-1}(1)|}{2},\frac{|\sigma^{-1}(2)|}{2},\frac{|\sigma^{-1}(3)|}{2}]$ such that both the resulting $yes$ and $no$ state
are well shaped of type $\sigma^{(j+1)}$.

Therefore, starting from the well-shaped state $(2^m, 0, 0, 0)$ by repeated application of the Claim, we have that there is a sequence of 
$4$-interval questions such that for every possible sequence of answers the resulting state is 
well-shaped and of type $(1, m, {m \choose 2}, {m \choose 3})$. Since all the above questions are balanced, we also have that 
$ch(1, m, {m \choose 2}, {m \choose 3}) = ch(2^m, 0, 0, 0) - m.$

Therefore if it is true that every well-shaped state of type $(1,m,{m \choose 2},{m \choose 3}) $ is 4-intervals nice, it follows that 
$(2^m, 0, 0, 0)$ is also $4$-interval nice, as desired. \qed
\end{proof}

The following lemma is actually a rephrasing (in the present terminology) of Proposition \ref{proposition:end_game}.

\begin{lemma}\label{corollary:1-mode}
For $n \in \mathbb{N}$ let $T(n) = \{(1,0,0,n), 
(0,1,0,n), (0,0,1,n), (0,0,0,n)\}$. 
Fix $n \in \mathbb{N}$ and let $\sigma$ be a state of type $\tau \in  T(n).$ 
Then $\sigma$ is 4-intervals nice.
\end{lemma}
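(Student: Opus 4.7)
The proof is essentially a reformulation of Proposition \ref{proposition:end_game}. The plan is to observe that every type $\tau \in T(n)$ satisfies the hypotheses of that proposition, namely $\sum_{i=0}^{2}|\sigma^{-1}(i)| \le 1$ and $|\sigma^{-1}(3)| = n$, and that every such $\tau$ is realizable as a well-shaped state: we place the (at most one) non-empty lower-level element inside one of the prescribed arcs of the cyclic layout (\ref{eq:well shaped})--(\ref{eq:well shaped2}), leave the remaining level-$0$, level-$1$ and level-$2$ arcs empty, and let the three arcs at level $3$ partition the $n$ elements arbitrarily.

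Given this, Proposition \ref{proposition:end_game} directly supplies a winning strategy for $\sigma$ of size exactly $ch(\sigma)$ using only $1$-interval queries. Since every $1$-interval query is \emph{a fortiori} a $4$-interval query, the strategy meets condition (2) of Definition \ref{def:4-interval-nice}, and condition (1) holds by assumption.

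The only thing left is to verify that each intermediate state in this strategy is again well-shaped, so that the recursion implicit in Definition \ref{def:4-interval-nice} is valid. For this I would inspect the three question patterns appearing in the inductive proof of Proposition \ref{proposition:end_game}: the base question selecting one of two level-$3$ elements; the binary-search step when all of levels $0$--$2$ are empty; and the calibrated $1$-interval containing the lone element $c$ at level $j<3$ together with an adjacent run of level-$3$ elements. In each case one checks the sufficient conditions of Lemma \ref{lemma:well-shapeness}: the question splits at most one arc per level, and any arc it interacts with on levels $0$--$2$ is (trivially, being a singleton) a mode that the interval either splits or covers in accordance with one of the cases (i)--(iv) of that lemma. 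The outcome is that both resulting states remain well-shaped, and their types still lie in $T(n')$ for some $n' \le n$, which closes the induction and establishes $4$-interval niceness.

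The main (very mild) obstacle is this last bookkeeping step; it is genuinely routine here because the states in $T(n)$ are structurally degenerate on the lower levels, so the questions involved touch at most one non-empty lower-level arc and the $\delta(\cdot)$-sums of Proposition \ref{prop:question_effect} collapse to a direct match with Lemma \ref{lemma:well-shapeness}.
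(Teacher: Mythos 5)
Your proof is correct and takes essentially the same route as the paper: the paper's proof likewise observes that a state of type $\tau\in T(n)$ is well shaped (one non-empty arc on some level $j\le 3$, with the remaining lower-level arcs empty) and then invokes Proposition~\ref{proposition:end_game}, which already produces a $1$-interval (hence \emph{a fortiori} $4$-interval) winning strategy of size $ch(\sigma)$ and notes in its inductive step that the intermediate states retain the required structure. Your additional bookkeeping with Lemma~\ref{lemma:well-shapeness} just makes explicit what the paper delegates to the closing remark of Proposition~\ref{proposition:end_game}'s proof.
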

\begin{proof}
First we observe that every state in $\sigma$ is well-shaped since it has one arc on level $j$ and one arc on level $4$. 
The existence of a perfect strategy only using $4$-interval questions is the a direct consequence of Proposition \ref{proposition:end_game}.
\qed
\end{proof}

\begin{definition}[$0$-typical state \cite{guzicki1990ulam}\cite{negro1992ulam}]

Let $\sigma$ be a state of type $(t_0,t_1,t_2,t_3)$ with $ch(t_0,t_1,t_2,t_3) = q$. We say that 
 $\sigma$ is $0$-typical if  the following hold
$$t_0 = 0; \quad t_2 \geq t_1-1; \quad t_3 \geq q.$$
\end{definition}

\begin{lemma}\label{corollary:Lemma2}
    Let $\sigma$ be a 0-typical well shaped state of character $\geq 12$, then 
    there exists a $4$-interval question $Q$ such that both the resulting states 
    $\sigma_{yes}$ and $\sigma_{no}$ are $0$-typical well-shaped and $ch(\sigma_{yes}) < ch(\sigma)$ and 
    $ch(\sigma_{no}) < ch(\sigma).$
\end{lemma}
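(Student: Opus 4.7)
My plan is to reduce the problem to choosing good type parameters $(b,c,d)$ for the question and then invoking Lemma~\ref{th:specialized-guzicki} as the $4$-interval implementation tool. Since $\sigma$ is $0$-typical we have $t_0=0$, so I restrict to questions of type $|Q|=[0,b,c,d]$. The standard update rules give $\tau(\sigma_{yes})=(0,\,b,\,t_1-b+c,\,t_2-c+d)$ and $\tau(\sigma_{no})=(0,\,t_1-b,\,t_2-c+b,\,t_3-d+c)$. Once $(b,c,d)$ is chosen, Lemma~\ref{th:specialized-guzicki} produces a $4$-interval question of this type with both successors well-shaped, Lemma~\ref{lemma:balance} forces strict character decrease provided the question is balanced, and $0$-typicality of the successors reduces to checking $t_2' \geq t_1'-1$ and $t_3' \geq q-1$ on both sides.

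Next I would take ``halving'' values $b \in \{\lfloor t_1/2\rfloor,\lceil t_1/2\rceil\}$ and $c \in \{\lfloor t_2/2\rfloor,\lceil t_2/2\rceil\}$, both inside the feasibility ranges of Lemma~\ref{th:specialized-guzicki}. With these choices, and using the $0$-typicality hypothesis $t_2 \geq t_1-1$, the two conditions $t_2' \geq t_1'-1$ rewrite as $2b \leq t_1+c+1$ for $\sigma_{yes}$ and $2b+t_2+1 \geq t_1+c$ for $\sigma_{no}$, both of which are immediate. The remaining work concentrates on (i) the balance condition $|w_{q-1}(\sigma_{yes}) - w_{q-1}(\sigma_{no})| \leq 1$, and (ii) the two $d$-bounds $q-1-t_2+c \leq d \leq t_3+c-q+1$ coming from $t_3' \geq q-1$, intersected with $0 \leq d \leq t_3$.

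A direct calculation, using Pascal's identity on the binomial coefficients ${q \choose j}$, expresses $w_{q-1}(\sigma_{yes})-w_{q-1}(\sigma_{no})$ as an affine function of $b,c,d$ in which $d$ appears with coefficient $2$. Hence, for any choice of $b$ and $c$, the balancing value $d^*$ is half-integral, and any integer $d$ with $|d-d^*|\leq 1/2$ satisfies the hypothesis of Lemma~\ref{lemma:balance}. The four available parities of $(b,c)$ then give enough slack to steer $d^*$ into the $0$-typicality window and inside $[0,t_3]$, in the same spirit as the Spencer-style parity alternation used in the asymptotic argument earlier in the paper.

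The main obstacle will be showing that the $0$-typicality window $[\max(0,q-1-t_2+c),\min(t_3,t_3+c-q+1)]$ is nonempty and actually contains $d^*$. Nonemptiness reduces, after inspection, to the inequality $t_2+t_3 \geq 2q-2$, which does not follow from $t_2 \geq t_1-1$ and $t_3 \geq q$ alone when $t_2$ is small. I plan to derive it by combining $0$-typicality with the character constraints $w_q(\sigma) \leq 2^q$ and $w_{q-1}(\sigma) > 2^{q-1}$: together these bound $t_1,t_2,t_3$ in such a way that $t_3$ is forced to be large whenever $t_2$ is small. The hypothesis $q \geq 12$ enters precisely as the threshold beyond which the low-order contributions from $\sum_{j \leq 2}{q-1 \choose j}$ can be absorbed and the numerical inequality closes, along the lines of the classical $3$-lies analysis of \cite{negro1992ulam}. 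Once $(b,c,d)$ is fixed, Lemma~\ref{th:specialized-guzicki} produces the $4$-interval question and Lemma~\ref{lemma:balance} closes the proof.
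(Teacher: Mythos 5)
Your overall architecture (choose a type $[0,b,c,d]$, implement it via Lemma~\ref{th:specialized-guzicki}, get the character drop from Lemma~\ref{lemma:balance}, then check $0$-typicality of both successors) is sensible, but the central step fails: it is not true that the four parities of $(b,c)\in\{\lfloor t_1/2\rfloor,\lceil t_1/2\rceil\}\times\{\lfloor t_2/2\rfloor,\lceil t_2/2\rceil\}$ always allow the balancing value $d^*$ to be steered into $[0,t_3]$. When $t_1$ is odd, the level-$1$ parity defect contributes $\pm\sum_{\ell\le 2}\binom{q-1}{\ell}$ to $w_{q-1}(\sigma_{yes})-w_{q-1}(\sigma_{no})$, so $d^*$ is displaced from $t_3/2$ by roughly $\tfrac12\binom{q-1}{2}$, while $0$-typicality only guarantees $t_3\ge q$, far too small to absorb this. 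Concretely, the state of type $(0,21,53,12)$ is $0$-typical of character $12$ (here $w_{11}=2055>2^{11}$, $w_{12}=2360\le 2^{12}$, $t_2\ge t_1-1$, $t_3\ge 12$), yet for the four choices $(b,c)\in\{10,11\}\times\{26,27\}$ the difference $w_{11}(\sigma_{yes})-w_{11}(\sigma_{no})$ equals $2d-78$, $2d+32$, $2d-56$, $2d+54$ respectively, so $d^*\in\{39,-16,28,-27\}$ never lies in $[0,12]$ and no balanced question of your form exists. (Questions decreasing both characters do exist for this state, e.g.\ the unbalanced $[0,11,27,12]$ or Guzicki's $[0,11,24,6]$, but your argument certifies the character drop only through Lemma~\ref{lemma:balance}, so it cannot reach them.) Your auxiliary inequality $t_2+t_3\ge 2q-2$ only addresses nonemptiness of the typicality window, not the location of $d^*$, so it does not repair this.

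This is precisely the difficulty the paper does not re-derive: its proof imports the question types of Guzicki and Negro--Sereno (the eight cases with the correction terms $B_1$ and $t$, under the case split $t_2\ge 3k-3$ or $t_3\ge k^2$), which compensate parity defects at level $2$, where each element weighs about $q$ in the volume, rather than at level $3$, where it weighs $1$; $0$-typicality and the character decrease of both successors are taken from those results, and the paper's new content is only that each such type (or its complement) meets the hypotheses of Lemma~\ref{th:specialized-guzicki} and is hence realizable as a well-shape-preserving $4$-interval question. To make your route work you would have to reproduce an equivalent case analysis, in particular letting $c$ deviate from $\lceil t_2/2\rceil$ by $\Theta(q)$, rather than pushing all the compensation onto $d$.
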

\begin{proof}
Let $(0, t_1, t_2, t_3)$ be the type of $\sigma$.
From  \cite[Theorems 3.3, 3.4]{guzicki1990ulam} and \cite[Lemma 2]{negro1992ulam} we have that if the type of $Q$ is defined according to the following cases, then 
both the resulting states $\sigma_{yes}$ and $\sigma_{no}$ are $0$-typical  and in addition $ch(\sigma_{yes}) < ch(\sigma)$ and 
    $ch(\sigma_{no}) < ch(\sigma).$

\begin{itemize}
	\item[Case 1.] $ch(\sigma) = k \geq 3$ and $t_2 \geq 3k-3$
	
	\noindent
	\begin{itemize}
		\item [{\em Case 1.1.}]  $t_1$ and $t_2$ are even numbers.
		
		Then $Q$ is chosen of type $[0, \frac{t_1}{2}, \frac{t_2}{2}, \lfloor \frac{t_3}{2} \rfloor].$
		
		\item [{\em Case 1.2.}]  $t_1$ is even and $t_2$ is odd.
		
		Then $Q$ is chosen of type $[0, \frac{t_1}{2}, \lfloor \frac{t_2}{2} \rfloor, \lfloor \frac{t_3+k-1}{2} \rfloor].$
		
		\item [{\em Case 1.3.}]  $t_1$ is odd and $t_2$ is even.
		
		Then $Q$ is chosen of type\footnote{In \cite[Theorem 3.3]{guzicki1990ulam} the equivalent question of type 
		  $[0, \lfloor \frac{t_1}{2} \rfloor, \frac{t_2}{2} + B_1, \lfloor \frac{t_3- t -1}{2} \rfloor]$ is used.}
		 $[0, t_1-\lfloor \frac{t_1}{2} \rfloor, t_2-(\frac{t_2}{2} + B_1), t_3-\lfloor \frac{t_3- t -1}{2} \rfloor],$
		where $B_1 = \lfloor \frac{\lfloor \frac{k^2-k+2}{2(k-1)} \rfloor}{2} \rfloor$ and $t = (2B_1+1)(k-1) - \frac{k^2-k+2}{2}.$
		
		\item [{\em Case 1.4.}]  $t_1$ and $t_2$ are odd.
		
		Then $Q$ is chosen of type\footnote{In \cite[Theorem 3.3]{guzicki1990ulam} the equivalent question,  type 
		  $[0, \lfloor \frac{t_1}{2} \rfloor, \lfloor \frac{t_2}{2} \rfloor + B_1, \lfloor \frac{t_3- t -1}{2} \rfloor],$ is used.}
		   $[0, t_1-\lfloor \frac{t_1}{2} \rfloor, t_2-(\lfloor \frac{t_2}{2} \rfloor + B_1), t_3-\lfloor \frac{t_3- t -1}{2} \rfloor],$
		where $B_1 = \lceil \frac{\lfloor \frac{k^2-k+2}{2(k-1)} \rfloor}{2} \rceil$ and $t = 2B_1(k-1) - \frac{k^2-k+2}{2}.$
		
	\end{itemize}
	
	\item[Case 2.]	$ch(\sigma) = k \geq 4$ and $t_3 \geq k^2.$ 

	\noindent
	\begin{itemize}
		\item [{\em Case 2.1.}]  $t_1$ and $t_2$ are even numbers.
		
		Then $Q$ is chosen of type  $[0, \frac{t_1}{2}, \frac{t_2}{2}, \lfloor \frac{t_3}{2} \rfloor].$
		
		\item [{\em Case 2.2.}]  $t_1$ is even and $t_2$ is odd.
		
		Then $Q$ is chosen of type  $[0, \frac{t_1}{2}, \lfloor \frac{t_2}{2} \rfloor, \lfloor \frac{t_3+k-1}{2} \rfloor].$
		
		\item [{\em Case 2.3.}]  $t_1$ is odd and $t_2$ is even.
		
		Then $Q$ is chosen of type $[0, \lfloor \frac{t_1}{2} \rfloor, \frac{t_2}{2} , \lfloor \frac{t_3 + t}{2} \rfloor],$
		where $t =  (\frac{k^2-3k+2}{2}) $ .
		
		\item [{\em Case 2.4.}]  $t_1$ and $t_2$ are odd.
		
		Then $Q$ is chosen of type\footnote{In \cite[Theorem 3.4]{guzicki1990ulam} the equivalent question of type 
		  $[0, \lfloor \frac{t_1}{2} \rfloor, \lceil \frac{t_2}{2} \rceil, \lfloor \frac{t_3 + t}{2} \rfloor],$ is used.}
		  $[0, t_1 - \lfloor \frac{t_1}{2} \rfloor, t_2 - \lceil \frac{t_2}{2} \rceil, t_3 - \lfloor \frac{t_3 + t}{2} \rfloor],$
		where $t = ( \frac{k^2-5k+4}{2}) $.
		
	\end{itemize}

\end{itemize}

By Lemma \ref{th:specialized-guzicki}, for each one of the type considered in the above 8 cases, there is a $4$-interval question
of that type such that the resulting states are well-shaped. This completes the proof of the claim.  \qed
\end{proof}

\medskip
\noindent
{\bf The $0$-typical states of character $\leq 13.$}
Let $\tilde{W}$ be the set of quadruples $(0, t_1, t_2, t_3)$ such that for each $\tau \in \tilde{W}$ the following two conditions are satisfied:
\begin{itemize}
\item every state $\sigma$ of type $\tau$ is nice
\item there exist $0\leq \alpha_1 \leq \lceil t_1/2\rceil$ and $0 \leq \alpha_2 \leq \lceil t_2/2 \rceil$ and $0 \leq \alpha_3 \leq t_3$ 
such that asking a question of type $[0, \alpha_1, \alpha_2, \alpha_3]$ in a state $\sigma$ of type $\tau$ both the resulting states
$\sigma_{yes}$ and $\sigma_{no}$ have character smaller than $\sigma$.
\end{itemize}

Note that if $\sigma$ is a well shaped state whose character is in $\tilde{W}$ then it is also $4$-interval nice, by Lemma \ref{th:specialized-guzicki}.

We can exhaustively compute all the states in $\tilde{W}$ of character $\leq 12$. More precisely, we will compute all the pairs $t_1, t_2,$ such that there 
exists $t_3$ and $(0, t_1, t_2, t_3) \in \tilde{W}$; $ch(0, t_1, t_2, t_3) \leq 12.$

Let $$\tilde{M}(t_1, t_2) = \begin{cases}
1 & t_1 = t_2 = 0;\\
0 & t_1 = 1, t_2 = 0 \mbox{ or } t_1 = 0, t_2 = 1;\\
\displaystyle{\min_{\substack{0\leq \alpha_1\leq \lceil \frac{t_1}{2} \rceil, \\ 0\leq \alpha_2\leq \lceil \frac{t_2}{2} \rceil}} \tilde{MinC}(t_1, t_2, \alpha_1, \alpha_2) }
& otherwise,
\end{cases}
$$

where setting $\tilde{\sigma} = (0, t_1, t_2, 0)$ and
$\tilde{\sigma}_{yes} = (0, y_1, y_2, y_3)$ and $\tilde{\sigma}_{no} = (0, n_1, n_2, n_3)$ being the resulting $yes$ and $no$ states when question 
$[0, \alpha_1, \alpha_2]$ is asked in state $\tilde{\sigma}$ and 
$$k_1 = ch(\tilde{\sigma}_{yes}), \, k_2 = ch(\tilde{\sigma}_{no}), \, k_3 = ch(0, y_1, y_2, \tilde{M}(y_1, y_2)), \, 
k_4 = ch(0, n_1, n_2, \tilde{M}(n_1, n_2)),$$
we have $\displaystyle{\tilde{MinC}(t_1, t_2, \alpha_1, \alpha_2) = \min \{t_3 \mid ch(0, t_1, t_2, t_3) > \max\{k_1, k_2, k_3, k_4\}\}.}$

It is not hard to see that the quantities $\tilde{M}$ and $\tilde{MinC}$ can be computed by a dynamic programming approach. The following 
proposition shows that they correctly characterize quadruples in $\tilde{W}$.

\begin{proposition} \label{prop:exhaustive}
The quadruple $\tau = (0, t_1, t_2, d)$ is in $\tilde{W}$ if and only if $d \geq \tilde{M}(t_1, t_2).$ 
\end{proposition}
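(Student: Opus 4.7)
The plan is to proceed by strong induction on $q = ch(\tau)$, establishing both implications in the inductive step. The base cases $(t_1, t_2) \in \{(0, 0), (1, 0), (0, 1)\}$ follow from the explicit values in the definition of $\tilde{M}$ combined with Lemma~\ref{corollary:1-mode}, which certifies $4$-interval niceness for states with at most one non-level-$3$ arc.

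For the backward direction (the cleaner one), I would use that $\tau \in \tilde{W}$ gives a question of type $[0, \alpha_1, \alpha_2, \alpha_3]$ with both resulting states $\sigma_{yes}$ (of type $(0, y_1, y_2, y_3 + \alpha_3)$) and $\sigma_{no}$ (of type $(0, n_1, n_2, n_3 + d - \alpha_3)$) nice and of character at most $q-1$, where $y_3, n_3$ denote the level-$3$ counts produced by the same question asked on $\tilde{\sigma} = (0, t_1, t_2, 0)$. The inductive hypothesis applied to $\sigma_{yes}$ yields $y_3 + \alpha_3 \geq \tilde{M}(y_1, y_2)$, whence monotonicity of $ch$ in the last coordinate gives $k_3 \leq ch(\sigma_{yes}) \leq q-1$. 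Similarly $k_4 \leq q-1$, while $k_1, k_2 \leq q-1$ follow trivially from $y_3 \leq y_3 + \alpha_3$ and $n_3 \leq n_3 + d - \alpha_3$. Consequently $q > \max\{k_1, k_2, k_3, k_4\}$, so by the definitions of $\tilde{MinC}$ and $\tilde{M}$ we conclude $d \geq \tilde{MinC}(t_1, t_2, \alpha_1, \alpha_2) \geq \tilde{M}(t_1, t_2)$.

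For the forward direction I would pick a minimizing pair $(\alpha_1^*, \alpha_2^*)$ in the definition of $\tilde{M}(t_1, t_2)$, giving $q > \max\{k_1^*, k_2^*, k_3^*, k_4^*\}$, and look for $\alpha_3 \in [0, d]$ such that the question $Q = [0, \alpha_1^*, \alpha_2^*, \alpha_3]$ produces states whose last coordinates satisfy the niceness lower bounds $y_3 + \alpha_3 \geq \tilde{M}(y_1, y_2)$ and $n_3 + d - \alpha_3 \geq \tilde{M}(n_1, n_2)$ and whose characters are at most $q-1$. Once such an $\alpha_3$ exists, the inductive hypothesis places $\tau_{yes}, \tau_{no}$ in $\tilde{W}$, delivering niceness of $\sigma$ via strategy concatenation, while $Q$ itself witnesses the second condition in the definition of $\tilde{W}$.

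The main obstacle is the feasibility of $\alpha_3$. Here I would exploit the volume identity $w_q(\sigma) = w_{q-1}(\sigma_{yes}) + w_{q-1}(\sigma_{no})$, noting that unit increments of $\alpha_3$ shift one unit of $(q{-}1)$-volume from the no-side to the yes-side. The character constraint cuts out an interval $[a, b] \subseteq [0, d]$ of admissible $\alpha_3$, non-empty thanks to $w_q(\sigma) \leq 2^q$ together with $k_1^*, k_2^* \leq q-1$ (which place the extremes of the interval within range). The niceness constraints cut out a second interval $[L, U]$, non-empty because $k_3^*, k_4^* \leq q-1$. The overlap $[a, b] \cap [L, U] \neq \emptyset$ is then obtained by summing the volume inequalities $w_{q-1}(0, y_1, y_2, \tilde{M}(y_1, y_2)) \leq 2^{q-1}$ and $w_{q-1}(0, n_1, n_2, \tilde{M}(n_1, n_2)) \leq 2^{q-1}$ and exploiting the decomposition $y_1 + n_1 = t_1$, $y_2 + n_2 = t_1 + t_2$, possibly after a small case split depending on which of the niceness lower bounds is active.
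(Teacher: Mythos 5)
Your approach diverges substantially from the paper's: the paper's proof is a one-line citation to Guzicki's Definition 3.14, Corollary 3.15, and Proposition 3.16, which already establish that $(0,t_1,t_2,t_3)$ is nice if and only if $t_3 \geq M(t_1,t_2)$ for the unconstrained analogue $M$ of $\tilde{M}$, and the paper then observes separately (after this proposition, via exhaustive computation over the relevant range of characters) that $\tilde{M}$ and $M$ in fact return identical values despite the extra constraints $\alpha_i \leq \lceil t_i/2\rceil$. You instead attempt to re-prove the characterization from scratch by strong induction, which is a legitimate but much heavier route.

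More importantly, your backward direction has a genuine gap. You write that membership of $\tau$ in $\tilde{W}$ yields a question whose resulting states $\sigma_{yes}$ and $\sigma_{no}$ are \emph{nice} and of character at most $q-1$, and you then invoke the inductive hypothesis on $\sigma_{yes}$ to deduce $y_3+\alpha_3 \geq \tilde{M}(y_1,y_2)$. But the definition of $\tilde{W}$ in the paper only guarantees that the resulting states have strictly smaller character — it says nothing about their niceness, let alone their membership in $\tilde{W}$, which is what your inductive hypothesis actually requires. Niceness does not propagate automatically: if $\sigma$ is nice, the first question of some winning strategy produces children admitting strategies of size $q-1$, but the particular question certifying the second bullet of $\tilde{W}$ need not be that question, and even when it is, the children may have character strictly less than $q-1$ and hence not be nice. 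To close this you would either have to appeal to Guzicki's result directly (which is what the paper does), or strengthen the definition of $\tilde{W}$ to a recursive one and re-prove its equivalence with the stated non-recursive version — neither of which your proposal does.

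Your forward direction, by contrast, is the right shape: fix a minimizer $(\alpha_1^*, \alpha_2^*)$, use the volume identity $w_q(\sigma)=w_{q-1}(\sigma_{yes})+w_{q-1}(\sigma_{no})$ and monotone shifting of $\alpha_3$ to exhibit a feasible split. But you leave the interval-overlap argument at the level of a sketch ("possibly after a small case split"), and without a worked-out interval-intersection argument the forward direction is also incomplete. Given that the paper simply delegates both directions to Guzicki and a computation, you should either explicitly appeal to that citation (the intended route) or fully carry out the inductive argument, paying particular care to fix the $\tilde{W}$-membership issue in the backward direction.
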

\begin{proof}
The statement immediately follows by \cite[Definition 3.14, Corollary 3.15, Proposition 3.16]{guzicki1990ulam} 
where it is shown that a state $(0, t_1, t_2, t_3)$ is nice 
if and only if $t_3 \geq M(t_1, t_2)$ where $M(t_1, t_2)$ is defined like $\tilde{M}(t_1, t_2)$ but for the 
fact that $\alpha_i$ can be as large as $t_i$ for $i=1, 2.$ \qed
\end{proof}

 By using the above functions, we can have an algorithm that 
 exhaustively compute all the 0-typical states of character $\leq 13$ which are not in $\tilde{W}.$ It turns out all such states have 
 character $\leq 11.$ The states are reported in Table \ref{tab:not-nice}. 
 It turns out that the output of the computation of $\tilde{M}(t_1,t_2)$ coincides with the output of the computation of $M(t_1, t_2)$, the function 
 considered in \cite{guzicki1990ulam} which does not require $\alpha_i \leq \lceil \frac{t_i}{2}\rceil$ ($i=1, 2$).
 In other words, every well shaped state of character $\leq 13$ which is nice is also $4$-interval nice. 
 
 The following proposition summarizes the above discussion.
\begin{proposition}\label{corollary:Lemma2}
Let $\sigma$ be a 0-typical well shaped state of type $(0,t_1,t_2,t_3).$ If $ch(0,t_1,t_2,t_3) \geq 12$, 
then $\sigma$ is 4-intervals nice. If $ch(0,t_1,t_2,t_3) \leq 12$ then $\sigma$ is 4-intervals nice
 unless it is one of the states in Table \ref{tab:not-nice}.
\end{proposition}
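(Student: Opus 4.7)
The proof proceeds by strong induction on $q = ch(\sigma)$, with the two regimes $q \geq 12$ and $q \leq 12$ handled separately, and the base case $q = 0$ (a final state) being trivially 4-interval nice.

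For $q \geq 12$, the preceding lemma supplies a 4-interval question whose yes- and no-children $\sigma_{yes}, \sigma_{no}$ are themselves 0-typical and well-shaped with strictly smaller character. Each child therefore satisfies the hypotheses of the present statement with smaller character, the inductive hypothesis yields 4-interval nice strategies on them, and concatenation with the chosen question gives the required strategy on $\sigma$. One must also verify that none of the children reached in this reduction can coincide with an entry of Table \ref{tab:not-nice}: the sporadic exceptions all sit at small $t_1, t_2, t_3$, and the 0-typicality constraints $t_2 \geq t_1 - 1$ and $t_3 \geq q'$ that are inherited by the children (with $q' = ch(\sigma_{yes})$ or $ch(\sigma_{no})$) rule out, by a direct arithmetic check, that any entry of Table \ref{tab:not-nice} is reachable from a 0-typical state of character $\geq 12$.

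For $q \leq 12$, the argument is finite and proceeds by direct computation. By the definition of $\tilde{W}$ just above the statement and by Proposition \ref{prop:exhaustive}, every 0-typical quadruple $(0,t_1,t_2,t_3)$ with $t_3 \geq \tilde{M}(t_1,t_2)$ admits a question of type $[0, \alpha_1, \alpha_2, \alpha_3]$ with $\alpha_1 \leq \lceil t_1/2 \rceil$ and $\alpha_2 \leq \lceil t_2/2 \rceil$ whose children have strictly smaller character. Lemma \ref{th:specialized-guzicki} converts any such question into a genuine 4-interval question that preserves well-shapedness, so induction on character yields 4-interval niceness for every state in $\tilde{W}$. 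A dynamic program over the recursions for $\tilde{M}$ and $\tilde{MinC}$ then enumerates the complement: the 0-typical quadruples of character $\leq 12$ lying outside $\tilde{W}$. A finite verification shows that all such quadruples have character at most $11$ and coincide exactly with the list displayed in Table \ref{tab:not-nice}.

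The decisive input, and the step I expect to be the real obstacle, is the comparison already flagged just above the statement: $\tilde{M}(t_1, t_2) = M(t_1, t_2)$ throughout the relevant range, where $M$ is Guzicki's original function without the halving constraint $\alpha_i \leq \lceil t_i/2\rceil$ for $i\in\{1,2\}$. Without this coincidence one would only obtain the inclusion $\tilde{W} \subseteq \{\text{nice } 0\text{-typical states}\}$, leaving open whether halving discards some nice state of small character. The equality is checked case by case against the eight question-type formulae catalogued in the preceding lemma: whenever a case uses an $\alpha_i$ exceeding $\lceil t_i/2\rceil$, the complementary question type is admissible and induces the same character drop, so the halving constraint is never binding. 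Once this equality is in hand, the remainder of the argument is purely finite bookkeeping.
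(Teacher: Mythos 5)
Your overall architecture (strong induction on character, two regimes, the preceding lemma for large character, the $\tilde{M}=M$ coincidence for small character) is the right one and matches the paper's implicit argument. There are, however, two concrete gaps.

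For $q\geq 12$, you argue that the children cannot be entries of Table~\ref{tab:not-nice} because ``the 0-typicality constraints $t_2\geq t_1-1$ and $t_3\geq q'$ rule out, by a direct arithmetic check, that any entry of the table is reachable.'' This cannot be the mechanism: the table consists precisely of 0-typical types (it is described as ``the states of character $\leq 12$ which are 0-typical but non nice''), so 0-typicality alone never excludes them. They are also not all ``small''; $(0,25,30,t_3)$ and $(0,14,20,t_3)$ appear. What actually rules out hitting an exception is that the questions in the preceding lemma are Guzicki's / Negro--Sereno's questions, and their theorems guarantee the resulting children are \emph{nice}, not merely 0-typical of smaller character; nice 0-typical states of character $\leq 11$ are by construction not in the table. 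That is the fact you must invoke, and it is not an arithmetic consequence of 0-typicality.

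For $q\leq 12$, you locate the real crux correctly — the coincidence $\tilde{M}(t_1,t_2)=M(t_1,t_2)$ — but your proposed verification (``case by case against the eight question-type formulae'' using the complementary question) does not quite cohere. The eight formulae belong to the preceding lemma and apply when $ch\geq 12$ and additionally $t_2\geq 3k-3$ or $t_3\geq k^2$; they do not parametrize the recursion that defines $\tilde{M}$ at small characters, where the minimization in $\tilde{MinC}$ ranges over all admissible $\alpha_1,\alpha_2$. Moreover, passing to the complementary question replaces $(\alpha_1,\alpha_2)$ by $(t_1-\alpha_1,t_2-\alpha_2)$, and if exactly one of $\alpha_1,\alpha_2$ violates the halving bound, neither the question nor its complement is admissible; so the complementary trick does not a priori show the halving constraint is never binding. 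The paper instead establishes $\tilde{M}=M$ on the finite range $ch\leq 13$ by a direct dynamic-programming computation, and that exhaustive check is what makes the $q\leq 12$ regime and the table of exceptions correct. You would need to either carry out that computation or supply a genuinely different argument that the optimal choice in Guzicki's recursion can always be taken with both $\alpha_1\leq\lceil t_1/2\rceil$ and $\alpha_2\leq\lceil t_2/2\rceil$.
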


\begin{table}[!htp]
{\small
		\centering
\begin{adjustbox}{max width=\textwidth}
			\begin{tabular}{rllrll}%
				\toprule
				{Character} & {State} & {Values of $t_3$} & {Character} & {State} & {Values of $t_3$} \\
				\otoprule
				$ch = 6$ & $(0,1,5,t_3)$ & $6 \leq t_3 \leq 7$  & $ch = 9$ & $(0,7,16,t_3)$ & $9 \leq t_3 \leq 30$\\
                & $(0,2,1,t_3)$ & $6 \leq t_3 \leq 13$  &  & $(0,7,17,t_3)$ & $9 \leq t_3 \leq 20$\\
				& $(0,2,2,6)$ &   &  & $(0,7,18,t_3)$ & $9 \leq t_3 \leq 10$\\				
				&  &   &  & $(0,8,9,t_3)$ & $9 \leq t_3 \leq 54$\\				
				&  &   &  & $(0,8,10,t_3)$ & $9 \leq t_3 \leq 44$\\				
				$ch = 7$ & $(0,2,7,t_3)$ & $7 \leq t_3 \leq 14$  &  & $(0,8,11,t_3)$ & $9 \leq t_3 \leq 34$\\
				& $(0,3,2,t_3)$ & $7 \leq t_3 \leq 25$  &  & $(0,8,12,t_3)$ & $9 \leq t_3 \leq 24$\\
				& $(0,3,3,t_3)$ & $7 \leq t_3 \leq 17$  &  & $(0,8,13,t_3)$ & $9 \leq t_3 \leq 14$\\		
				& $(0,3,4,t_3)$ & $7 \leq t_3 \leq 9$  &  & $(0,9,8,t_3)$ & $9 \leq t_3 \leq 18$\\		
				&  &   &  &  & \\		
				&  &   & $ch = 10$ & $(0,13,25,t_3)$ & $10 \leq t_3 \leq 21$\\		
				&  &   &  & $(0,13,26,10)$ & \\		
				$ch = 8$ & $(0,3,15,t_3)$ & $8 \leq t_3 \leq 10$  &  & $(0,14,17,t_3)$ & $10 \leq t_3 \leq 53$\\
				& $(0,4,9,t_3)$ & $8 \leq t_3 \leq 27$  &  & $(0,14,18,t_3)$ & $10 \leq t_3 \leq 42$\\
				& $(0,4,10,t_3)$ & $8 \leq t_3 \leq 18$  &  & $(0,14,19,t_3)$ & $10 \leq t_3 \leq 31$\\
				& $(0,4,11,t_3)$ & $8 \leq t_3 \leq 9$  &  & $(0,14,20,t_3)$ & $10 \leq t_3 \leq 20$\\
				& $(0,5,4,t_3)$ & $8 \leq t_3 \leq 35$  &  & $(0,15,14,t_3)$ & $10 \leq t_3 \leq 30$\\
				& $(0,5,5,t_3)$ & $8 \leq t_3 \leq 26$  &  & $(0,15,15,t_3)$ & $10 \leq t_3 \leq 19$\\
				& $(0,5,6,t_3)$ & $8 \leq t_3 \leq 17$  &  &  & \\
				& $(0,5,7,8)$ &   & $ch = 11$ & $(0,25,30,t_3)$ & $11 \leq t_3 \leq 13$\\
				\bottomrule
			\end{tabular}
		\end{adjustbox}
		\caption{The table shows all the states of character $\leq 12$ which are 0-typical but non nice \cite{guzicki1990ulam}\cite{negro1992ulam}. This set coincides with  
			the set of types of well shaped states that are 0-typical but non 4-interval nice\label{tab:not-nice}.}
}
\end{table}

\begin{lemma}
\label{lemma:lemma5}
Let $\sigma$ be a well-shaped state of type $(1,0,3,n)$ with $n \geq 7$, then $\sigma$ is $4$-interval nice. Namely, there exists a $4$-interval question $Q$ such that both the resulting states $\sigma_{yes}$ and $\sigma_{no}$ are $4$-interval nice.
\end{lemma}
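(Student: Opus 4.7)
The plan is to build a single $4$-interval question $Q$ of type $[1,0,0,d]$ via Lemma~\ref{th:specialized-lemma5}, with $d$ chosen so that $Q$ is \emph{balanced} in the sense of Lemma~\ref{lemma:balance}; then we show each resulting state is itself $4$-interval nice by invoking the results already developed. Explicitly, if we apply $Q$ to $\sigma$ of type $(1,0,3,n)$, the resulting states have types
\[
\tau(\sigma_{yes}) = (1,0,0,d+3), \qquad \tau(\sigma_{no}) = (0,1,3,n-d).
\]
Let $q = ch(\sigma)$. A direct computation of $w_{q-1}(\sigma_{yes})$ and $w_{q-1}(\sigma_{no})$ gives the difference $\binom{q-1}{3}-3q+3+2d-n$, so setting $d = \lfloor (n + 3q - 3 - \binom{q-1}{3})/2 \rfloor$ enforces $|w_{q-1}(\sigma_{yes})-w_{q-1}(\sigma_{no})|\le 1$. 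The hypothesis $n\geq 7$ (which implies $q\geq 7$) is exactly what is needed to verify $0\le d\le n$, so this choice is feasible.

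With $d$ fixed, Lemma~\ref{th:specialized-lemma5} provides a $4$-interval $Q$ of type $[1,0,0,d]$ preserving well-shapedness of both resulting states, and Lemma~\ref{lemma:balance} then gives $ch(\sigma_{yes}),ch(\sigma_{no})\le q-1$. The yes state $(1,0,0,d+3)$ has all its mass on a single mode at level $0$ plus level~$3$, which is exactly the form handled by Lemma~\ref{corollary:1-mode}; hence $\sigma_{yes}$ is $4$-interval nice.

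The bulk of the work is to show that the no state $\sigma_{no}$ of type $(0,1,3,n-d)$ is $4$-interval nice. Observe that this state is automatically $0$-typical in the sense of the definition preceding Lemma~\ref{corollary:Lemma2}: one has $t_0=0$ and $t_2=3\ge t_1-1=0$, and the balanced choice of $d$, together with $n\ge 7$, yields $n-d\ge ch(\sigma_{no})$ for all $q$ in the range that can arise. Consequently, if $ch(\sigma_{no})\ge 12$, Proposition~\ref{corollary:Lemma2} gives a balanced $4$-interval question for $\sigma_{no}$ whose successors are again $0$-typical well-shaped states, and iterating this process exhibits a perfect $4$-interval strategy. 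For the finitely many $\sigma_{no}$ of character $\le 12$ one invokes the exhaustive characterization in Proposition~\ref{prop:exhaustive}: inspection of Table~\ref{tab:not-nice} shows that no type of the form $(0,1,3,\cdot)$ appears among the exceptional entries, so these states are $4$-interval nice as well.

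The main obstacle I anticipate is the boundary range $n=7,8,\dots$, where $ch(\sigma_{no})$ is small and the balance inequality for $d$ is tight. The delicate verification is that the balanced $d$ simultaneously (i) lies in $[0,n]$, (ii) makes $\sigma_{no}$ $0$-typical, and (iii) avoids the exceptional entries of Table~\ref{tab:not-nice}; this may need a short case split by the residues of $n-d$ modulo small values, possibly using the alternative balanced choice $d' = \lceil (n + 3q - 3 - \binom{q-1}{3})/2 \rceil$ when the first choice fails a side condition. Once these low-character cases are handled, the argument for arbitrary $n\ge 7$ follows uniformly from Lemmas~\ref{th:specialized-lemma5}, \ref{lemma:balance}, \ref{corollary:1-mode} and Proposition~\ref{corollary:Lemma2}.
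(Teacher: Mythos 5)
Your overall plan is essentially the paper's: ask a question of type $[1,0,0,d]$ (supplied by Lemma \ref{th:specialized-lemma5}), handle $\sigma_{yes}=(1,0,0,d+3)$ by Lemma \ref{corollary:1-mode}, and handle $\sigma_{no}=(0,1,3,n-d)$ by the $0$-typicality machinery (Proposition \ref{corollary:Lemma2}). Your balance formula $d=\lfloor(n+3q-3-\binom{q-1}{3})/2\rfloor$ with $q=ch(\sigma)$ is the correct one; the paper's stated $x = \lfloor(n+3q+\binom{q}{3})/2\rfloor$ with $q = ch(\sigma)-1$ and question type $[1,0,3,x]$ both look like sign/typing slips, since they are infeasible and inconsistent with the stated resulting types.

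However, there is a concrete gap exactly at the boundary you flagged, and it cannot be patched by switching floor to ceiling. For $n=7$ one has $ch(1,0,3,7)=7$, so $d=\lfloor(7+21-3-\binom{6}{3})/2\rfloor = 2$, giving $\sigma_{no}=(0,1,3,5)$. But $ch(0,1,3,5)=6 > 5 = t_3$, so $(0,1,3,5)$ is \emph{not} $0$-typical and Proposition \ref{corollary:Lemma2} cannot be invoked. Your proposed fallback $d'=\lceil\cdot\rceil=3$ yields $(0,1,3,4)$, which is even farther from $0$-typical ($4 < ch = 6$). The same failure occurs for $n=8$ (balanced $d=3$, $\sigma_{no}=(0,1,3,5)$ again). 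Thus your assertion that "$n-d\ge ch(\sigma_{no})$ for all $q$ in the range that can arise" is false in exactly the regime you identified as delicate, and the strategy you sketch for fixing it (alternate rounding) does not work. To close the gap one has to either choose an \emph{unbalanced} $d$ for $n=7,8$ and verify character decrease directly, or — as the paper does — give an explicit depth-$2$ 4-interval continuation from $(0,1,3,5)$, namely asking $[0,1,0,5]$ which lands in $(0,1,0,8)$ (4-interval nice by Lemma \ref{corollary:1-mode}) and $(0,0,4,0)$ (4-interval nice since nice and supported on four elements). For $n\ge 10$ (and in fact already for $n=9$) your balanced $d$ does yield a $0$-typical $\sigma_{no}$ that avoids Table \ref{tab:not-nice}, and the argument goes through.
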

\begin{proof}
From \cite[Lemma 5]{negro1992ulam} for $7\leq n \leq 9$, we have that if the question $Q$ is defined as in Table \ref{tab:lemma5} then the resulting states $\sigma_{yes}$ and $\sigma_{no}$ have character strictly smaller than $ch(\sigma)$ and are nice. 
In addition, by Lemmas \ref{th:specialized-guzicki} and  \ref{th:specialized-lemma5}, we have that each question reported in the table can be implemented using at most $4$-intervals.  Moreover, the resulting states are also $4$-interval nice as consequence of Lemma \ref{corollary:1-mode} and as consequence of the observation that the state $(0,0,4,0)$ corresponds to the state $(4,0)$ that is nice by \cite{Pel0} and, since the state has only four elements, it is straightforward to implement every question with at most $4$-intervals. From those observations, it follows that a well shaped state of type $(1,0,3,n)$, for $7 \leq n \leq 9$ is $4$-interval nice.

Let us now consider the case $n \geq 10$. Let the question $Q$ be a question of type $[1,0,3,x]$, then the resulting states $\sigma_{yes}$ and $\sigma_{no}$ have types $(1,0,0,3 + x)$ and $(0,1,3,n-x)$ respectively.
Again from \cite[Lemma 5]{negro1992ulam} for $n \geq 10$, we have that if $x = \lfloor (n+3q+{q \choose 3})/2 \rfloor$, where $ch(\sigma) = q+1$,  the resulting states $\sigma_{yes}$ and $\sigma_{no}$ are nice.
Moreover, from Lemma \ref{th:specialized-lemma5} it follows that the question $Q$ can be implemented using at most $4$ intervals, then we have that the resulting states $\sigma_{yes}$ and $\sigma_{no}$ are also well shaped.
Indeed, note that $\sigma_{yes}$ is $4$-interval nice by Lemma \ref{corollary:1-mode} while $\sigma_{no}$ is $0$-typical, then it is $4$-interval nice by Proposition \ref{corollary:Lemma2}. The proof is complete \qed

\end{proof}
\begin{table}[!htp]
{\small
		\centering
		\caption{First part of the proof of Lemma \ref{lemma:lemma5} and first part of the proof of \cite[Lemma 5]{negro1992ulam}\label{tab:lemma5}}
			\begin{adjustbox}{max width=\textwidth}
			\begin{tabular}{rlllll}%
				\toprule
				{Case} & {State}   		  & {Question} & {Implementation} & Yes-State & No-State \\
				\otoprule
				$n=7$ 	& $(1,0,3,7)$ 	& $[1,0,0,2]?$  & Lemma \ref{th:specialized-lemma5} & $(1,0,0,5)$\textsuperscript{a} & $(0,1,3,5)$\\
				&$(0,1,3,5)$ 		& $[0,1,0,5]?$  & Lemma \ref{th:specialized-guzicki}& $(0,1,0,8)$\textsuperscript{a} & $(0,0,4,0)$\textsuperscript{b}\\
				\cmidrule(lr){1-6}
                $n=8$ 	& $(1,0,3,8)$ 	& $[1,0,0,3]?$  & Theorem \ref{th:specialized-lemma5} & $(1,0,0,6)$\textsuperscript{a} & $(0,1,3,5)$\\
				&$(0,1,3,5)$ 		& $[0,1,0,5]?$  & Lemma \ref{th:specialized-guzicki}& $(0,1,0,8)$\textsuperscript{a} & $(0,0,4,0)$\textsuperscript{b}\\
				\cmidrule(lr){1-6}
				$n=9$ 	& $(1,0,3,9)$ 	& $[1,0,0,4]?$  & Theorem \ref{th:specialized-lemma5} & $(1,0,0,7)$\textsuperscript{a} & $(0,1,3,5)$\\
				&$(0,1,3,5)$ 		& $[0,1,0,5]?$  & Lemma \ref{th:specialized-guzicki}& $(0,1,0,8)$\textsuperscript{a} & $(0,0,4,0)$\textsuperscript{b}\\
				\bottomrule
			\end{tabular}
			\end{adjustbox}
\justify
	\small\textsuperscript{a}This state is 4-intervals nice by Lemma~\ref{corollary:1-mode} \\
	\small\textsuperscript{b}This state is nice by~\cite{Pel0} and since the state has only four elements all the questions can be implemented using at most 4 intervals

}
\end{table}

\begin{lemma}\label{Corollary:Lemma7}
Fix $m \geq 33$ and let $\sigma$ be a well shaped state of type $(1,m,{m \choose 2}, {m \choose 3})$. 
Then,  there exists a sequence of three $4$-interval questions such that all  
the resulting states are 4 interval-nice and of characters $\leq ch(\sigma) - 3.$
\end{lemma}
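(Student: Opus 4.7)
The plan is to emulate the three-question strategy of \cite[Lemma 7]{negro1992ulam} for the classical Ulam-R\'enyi game, which, under the same hypothesis $m\geq 33$, provides three questions that reduce the character by three at each of the eight leaves while keeping every leaf state nice. My task is then twofold: show that each question in that strategy admits a $4$-interval realization preserving well-shapedness, and show that every leaf state is not merely nice but $4$-interval nice.

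Concretely, I would ask the first question $Q_1$ of type $[1,1,c_1,d_1]$, with $c_1,d_1$ taken from the \cite{negro1992ulam} recipe. A direct computation of the update rules gives that the ``no'' state has type $(0,\,m,\,\binom{m}{2}-c_1+1,\,\binom{m}{3}-d_1+c_1)$, which is $0$-typical, while the ``yes'' state has type $(1,\,1,\,c_1+m-1,\,d_1+\binom{m}{2}-c_1)$. Lemma \ref{th:specialized-YES1} provides a $4$-interval implementation of $Q_1$ whose two outcomes are both well-shaped. From the ``no''-branch I would continue with two further questions, each chosen via Lemma \ref{corollary:Lemma2}, keeping the state $0$-typical and reducing the character by one at each step. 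From the ``yes''-branch I would ask $Q_2$ of type $[1,0,2,d_2]$, implementable by Lemma \ref{th:specialized-YES2}: the ``YY''-state has type $(1,0,3,n)$ for some $n$, and the ``YN''-state is $0$-typical. Finally, from the ``YY''-state I would apply the question prescribed by the proof of Lemma \ref{lemma:lemma5}, implementable by Lemma \ref{th:specialized-lemma5}: the ``YYY''-leaf has type $(1,0,0,\cdot)$ and is $4$-interval nice by Lemma \ref{corollary:1-mode}, while the ``YYN''-leaf is $0$-typical. The remaining $0$-typical leaves of the full tree are handled uniformly by Lemma \ref{corollary:Lemma2}.

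The hypothesis $m\geq 33$ plays two roles in closing the argument. First, a direct volume computation shows that $ch(\sigma)$ is large enough (in particular $ch(\sigma)\geq 13$) so that all $0$-typical leaves appearing after three questions have character lying in the safe region of Lemma \ref{corollary:Lemma2}, i.e.\ outside the exceptional entries of Table \ref{tab:not-nice}; second, it guarantees that the numerical parameters $c_1,d_1,d_2$ dictated by the \cite{negro1992ulam} recipe lie within the admissible ranges $c\leq \lceil c_0/2\rceil$, $d\leq \lceil 2d_0/3\rceil$ and $\lfloor c_1/4\rfloor\leq c\leq \lceil c_1/2\rceil$ required by Theorem \ref{th:generalized-questions} and by Lemmas \ref{th:specialized-YES1}, \ref{th:specialized-YES2}, \ref{th:specialized-lemma5}; and that the value $n$ reached on the ``YY''-branch satisfies $n\geq 7$, so that Lemma \ref{lemma:lemma5} applies.

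The main obstacle will be the case-by-case bookkeeping needed to show that each of the eight leaves falls into one of the three already-established $4$-interval nice families: states with a single mode (Lemma \ref{corollary:1-mode}), states of type $(1,0,3,n)$ with $n\geq 7$ (Lemma \ref{lemma:lemma5}), or $0$-typical states whose character/type pair avoids Table \ref{tab:not-nice} (Lemma \ref{corollary:Lemma2}). The verification is not conceptually hard but is where the bound $m\geq 33$ is tight, since it is exactly the threshold at which the characters reached after three steps are uniformly above the exceptional regime inherited from the classical analysis of \cite{guzicki1990ulam,negro1992ulam}. \qed
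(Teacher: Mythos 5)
Your high-level plan — importing the three-question strategy from \cite[Lemma 7]{negro1992ulam}, implementing each question via the specialized 4-interval lemmas, and handling the leaves with Lemma \ref{corollary:1-mode}, Lemma \ref{lemma:lemma5}, and Proposition \ref{corollary:Lemma2} — matches the paper's strategy. However, you have misidentified the structure of the \cite{negro1992ulam} recipe, and this shifts your whole question chain off by one step in a way that the cited results no longer cover.

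Concretely, the first question in the \cite{negro1992ulam} strategy (and in the paper's proof) has type $[1,\lfloor b_0/2\rfloor, \lfloor c_0/2\rfloor - \lfloor b_0/2\rfloor, x]$ — i.e.\ it takes roughly half of the $m$ elements on level~$1$, and is implemented via Theorem \ref{th:generalized-questions}. The $[1,1,\ldots]$ type question is the \emph{second} question in that strategy, applied to the intermediate state $(1,\lfloor m/2\rfloor, c_1, d_1)$, and it is there that Lemma \ref{th:specialized-YES1} is invoked. Your proposal applies the $[1,1,c_1,d_1]$ question directly to $(1,m,\binom m2,\binom m3)$, reaches $(1,1,\cdot,\cdot)$ after one question instead of two, $(1,0,3,n)$ after two instead of three, and $(1,0,0,\cdot)$ at the three-question leaf — whereas the paper's leaf is $(1,0,3,\cdot)$, handled by Lemma \ref{lemma:lemma5}. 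Because your $Q_1$ is \emph{not} the question \cite{negro1992ulam} analyzes, you cannot cite that paper for the claims you most need: that this question is balanced (so that both branches drop in character) and that the no-state is $0$-typical with parameters avoiding Table \ref{tab:not-nice}. These are precisely the properties that \cite{negro1992ulam} verifies for their $[1,\lfloor m/2\rfloor,\ldots]$ question, and they do not automatically transfer to a $[1,1,\ldots]$ question, which is wildly unbalanced at level~$1$ (all but one of the $m$ level-$1$ elements go to the no-side). The sentence claiming the parameters ``lie within the admissible ranges'' is an unverified assertion, not a proof, and unlike in the paper's argument there is no external source to discharge it.

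To repair the argument you would either need to re-prove from scratch, for all $m\geq 33$, that a balanced $[1,1,c_1,d_1]$ question with $\lfloor c_0/4\rfloor\leq c_1\leq\lceil c_0/2\rceil$ exists on $(1,m,\binom m2,\binom m3)$ and produces a $0$-typical no-state outside Table \ref{tab:not-nice}, with similar re-verifications for your $Q_2$ and $Q_3$ — or, more simply, restore the correct first question $[1,\lfloor b_0/2\rfloor, \lfloor c_0/2\rfloor - \lfloor b_0/2\rfloor, x]$ (implementable by Theorem \ref{th:generalized-questions} since $\lfloor b_0/2\rfloor\leq\lceil b_0/2\rceil$ etc.), and then apply Lemmas \ref{th:specialized-YES1} and \ref{th:specialized-YES2} to the subsequent yes-states as the paper does.
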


\begin{proof}
By \cite[Lemma 7]{negro1992ulam} for a state $\sigma$ satisfying the hypothesis of the lemma,  
there exists a sequence of three questions such that all the resulting states are nice and of character $\leq ch(\sigma)-3.$ 
In particular, denoting by $(1,b_0,c_0,d_0)$ the state of type $(1,m,{m \choose 2}, {m \choose 3})$ the questions are defined according to the 
following rules, where $Q_i$ is the $i$th question ($i=1, 2, 3,$) and 
$YES_i$ (resp. $NO_i$) denotes the state resulting from the answer $yes$ (resp.\ $no$) to question $Q_i.$

Let $ q = ch(1,m,{m \choose 2}, {m \choose 3}) - 2$.
	
	Let $Q_1$ be a question of type $|Q_1| = [1,\lfloor \frac{b_0}{2} \rfloor,\lfloor \frac{c_0}{2} \rfloor - \lfloor \frac{b_0}{2} \rfloor , x]?$. With 
	$x=\lfloor \frac{\alpha}{2}\rfloor$ where 
	$\alpha = d_0 + 2\lfloor \frac{c_0}{2} \rfloor - c_0 - 2\lfloor \frac{b_0}{2} \rfloor - {q+1 \choose 3} + {q+1 \choose 2}(b_0 + 1 -2\lfloor \frac{b_0}{2} \rfloor) + (q+2)(c_0 + 4\lfloor \frac{b_0}{2} \rfloor  - b_0 -2\lfloor \frac{c_0}{2} \rfloor ).$

Thus, $YES_1 = (1,\lfloor \frac{b_0}{2}\rfloor, b_0 - 2\lfloor \frac{b_0}{2}\rfloor + \lfloor \frac{c_0}{2}\rfloor , c_0 - \lfloor \frac{c_0}{2}\rfloor + \lfloor \frac{b_0}{2}\rfloor + x)$ and $NO_1 = (0, b_0 + 1 - \lfloor \frac{b_0}{2}\rfloor , 2\lfloor \frac{b_0}{2}\rfloor + c_0 - \lfloor \frac{c_0}{2}\rfloor, \lfloor \frac{c_0}{2}\rfloor - \lfloor \frac{b_0}{2}\rfloor +d_0 - x ) $ are the resulting states.

Let us now denote the type of $YES_1$ as $(1,b_1,c_1,d_1)$ then let $Q_2$ be a question of type $|Q_2| = [1,1,\lfloor \frac{c_1}{2} \rfloor - \lfloor \frac{b_1}{2} \rfloor , y]?$ with $y = \lfloor \frac{\beta}{2} \rfloor$ where 
$\beta = (b_1 -1) {q \choose 2} - {q \choose 3} + (q+1)(c_1 +2 -b_1 + 2\lfloor \frac{b_1}{2} \rfloor   -2\lfloor \frac{c_1}{2} \rfloor )+
d_1 -c_1 + 2\lfloor \frac{c_1}{2} \rfloor -2\lfloor \frac{b_1}{2} \rfloor.$

Thus, the type of the resulting states $YES_2$ and $NO_2$ are given by 
$|YES_2| = (1,1,b_1 -1 +\lfloor \frac{c_1}{2}\rfloor - \lfloor \frac{b_1}{2}\rfloor , c_1 - \lfloor \frac{c_1}{2}\rfloor + \lfloor \frac{b_1}{2}\rfloor +y )$ 
and $|NO_2| = (0,b_1, 1+ c_1 - \lfloor \frac{c_1}{2}\rfloor + \lfloor \frac{b_1}{2}\rfloor, \lfloor \frac{c_1}{2}\rfloor - \lfloor \frac{b_1}{2}\rfloor + d_1 -y ).$ 
	
Let us denote the type of $YES_2$ by $(1,b_1,c_1,d_1)$ then question $Q_3$ is chosen to be of  type $|Q_3| = [1,0,2, z]?$ with
$z = \Bigg\lfloor \frac{ d_2 + 4 -c_2 - {q-1 \choose 3} + 2{q-1 \choose 2} +q(c_2-5)}{2} \Bigg\rfloor.$

We have that the resulting states have type $|YES_3| = (1,0,3,c_2-2+z)$ and $|NO_3| =( 0,2,c_2-2 , 2+d_2-z).$

\smallskip

It remains to show that each one of the above questions can be implemented using only 4-intervals and guaranteeing that the resulting state is also
well-shaped. This is true of $Q_1$ by Theorem \ref{th:generalized-questions}; in addition 
question $Q_2$ applied to the state $YES_1$ satisfies the constraints of Lemma \ref{th:specialized-YES1}. Finally, 
question $Q_3$ asked in state $YES_2$ satisfies the constraints of Lemma \ref{th:specialized-YES2}.

Finally, we observe that the states
$NO_1$, $NO_2$ and $NO_3$ are 4 interval-nice by Proposition \ref{corollary:Lemma2}  and the states 
$YES_3$ is 4 interval-nice by Lemma \ref{lemma:lemma5}.
\qed
\end{proof}

\begin{corollary} \label{corollary:theoremNS}
Fix $m \in \mathcal{N} = \mathbb{N} \setminus \{2,3,5 \}$ and let $\sigma$ be a well shaped state of type 
$(1,m,{m \choose 2},{m \choose 3}).$ 
Then $\sigma$ is 4-intervals nice.
\end{corollary}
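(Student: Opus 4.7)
The plan is to split the proof into two regimes based on the size of $m$. For the asymptotic regime $m \geq 33$, the claim follows essentially from Lemma \ref{Corollary:Lemma7}. For the finite set of small values $m \in \mathcal{N} \cap \{0, 1, \ldots, 32\}$, the claim must be verified by direct inspection, and this is where the bulk of the technical work lies.

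For $m \geq 33$, Lemma \ref{Corollary:Lemma7} supplies a sequence of three $4$-interval questions asked at $\sigma$ with the property that each of the (at most eight) resulting states is $4$-interval nice and has character at most $ch(\sigma) - 3$. Grafting to this depth-three tree the winning $4$-interval strategies that witness the $4$-interval niceness of each terminal state produces a winning $4$-interval strategy for $\sigma$ of size exactly $ch(\sigma)$, which is precisely the definition of $\sigma$ being $4$-interval nice.

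For each small $m \in \{0, 1, 4\} \cup \{6, 7, \ldots, 32\}$ the strategy is to follow the same template as Lemma \ref{Corollary:Lemma7} but to compute, for each individual $m$, a first $4$-interval question (of a type produced by Theorem \ref{th:generalized-questions}) that drives $\sigma$ into states that are either $0$-typical well-shaped (dispatched by Proposition \ref{corollary:Lemma2}), of the form $(1,0,3,n)$ handled by Lemma \ref{lemma:lemma5}, or of the form covered by Lemma \ref{corollary:1-mode}. The case $m = 0$ is immediate since $\sigma$ is final. The cases $m = 1$ and $m = 4$ have very small character and admit an explicit short strategy. For $6 \leq m \leq 32$ one follows, modulo parameter adjustment, the same three-step pattern of Lemma \ref{Corollary:Lemma7}, each time invoking the specialized lemmas \ref{th:specialized-guzicki}, \ref{th:specialized-YES1}, \ref{th:specialized-YES2}, \ref{th:specialized-lemma5} to ensure that the chosen questions can be implemented as $4$-interval questions preserving well-shapeness.

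The main obstacle is the finite case-by-case verification for $6 \leq m \leq 32$. The clean parameter estimates underlying Lemma \ref{Corollary:Lemma7} (in particular the hypotheses $t_2 \geq 3k - 3$ and $t_3 \geq k^2$ invoked in Proposition \ref{corollary:Lemma2}) become tight or fail for small $m$, so for each such $m$ one must check that the types of the states produced by the chosen sequence of questions avoid the exceptional list in Table \ref{tab:not-nice}. When they do not, one must adjust the question parameters, staying within the ranges allowed by Theorem \ref{th:generalized-questions} and the specialized lemmas, to steer the play away from the exceptional types. This is a bounded but delicate bookkeeping, best carried out with the aid of a computer check that for every $m$ in the small range a concrete triple of admissible $4$-interval questions yields only terminal states falling into one of the three classes already shown to be $4$-interval nice.
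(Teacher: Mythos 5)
Your decomposition matches the paper's: handle $m\ge 33$ via Lemma~\ref{Corollary:Lemma7}, and verify the finitely many small $m$ by exhibiting explicit question sequences that route play into states already known to be $4$-interval nice (via Proposition~\ref{corollary:Lemma2}, Lemma~\ref{lemma:lemma5}, Lemma~\ref{corollary:1-mode}). The one idea the paper uses that your proposal misses is a domination observation that sharply reduces the finite case analysis: if $\sigma$ and $\sigma'$ have the same character and $\sigma'$ is componentwise dominated by $\sigma$, then a perfect ($4$-interval) strategy for $\sigma$ yields one for $\sigma'$. This lets the paper handle $6\le m\le 32$ by checking only $m\in\{8,12,17,23,32\}$ (the largest $m$ attaining each character value in that range), with three levels of questions per case recorded in Table~\ref{tab:lemma6}, rather than your proposed exhaustive scan over all $27$ values. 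Your approach would still work in principle, but it trades the monotonicity shortcut for a larger brute-force verification; also note that the paper does not in fact treat $m=0$ (where the state $(1,0,0,0)$ is final and trivially nice, as you correctly observe), and its tables for $m=1$ and $m=4$ play exactly the role you assign to them.
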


\begin{proof}

The searching strategy for $m = 1$ is summarized in Table \ref{tab:m1}. Since the number of elements is $\leq 2$ trivially every question
is $4$-interval. 
The strategy for $m = 4$ are summarized in Table \ref{tab:m4}. The reason for the implementability of each question involved 
by only using $4$-intervals is indicated in the column "Implementation" referring to the appropriate lemma. 

\smallskip

Before proceeding to the analysis of the remaining cases, we note 
that given two states $\sigma = (t_0,t_1,t_2,t_3 )$ and $\sigma' = (t'_0,t'_1,t'_2,t'_3 )$ 
such that $ch(\sigma) = ch(\sigma')$ and for all $i = 0,1,2,3$ it holds that $t'_i \leq t_i$, 
a perfect strategy for $\sigma$ immediately gives a perfect strategy for $\sigma'$.
Therefore, in order to prove the claim  for  $6 \leq m \leq 32$, it suffice to consider the cases 
$m \in \{8,12,17,23,32\}$. In fact for each $6 \leq m \leq 32$ such that $m \not \in \{8, 12, 17, 23, 32\}$ there exists 
$m' \in  \{8,12,17,23,32\}$ such that  $m' \geq m$ and $ch(2^m, 0, 0, 0) = ch(2^{m'}, 0, 0, 0).$ Hence by the above observation 
a perfect $4$-interval strategy for the former implies a perfect $4$-interval strategy for the latter. 

\smallskip

For each $m \in \{8,12,17,23,32\}$ a $4$-interval perfect strategy is described in Table \ref{tab:lemma6}. The 
analysis of the first three levels of the search tree together with the results of Lemma \ref{lemma:lemma5} and
 Proposition \ref{corollary:Lemma2} is sufficient to prove the 4-intervals niceness. 
Finally,   for every $m \geq 33$, the state $(1,m,{m \choose 2}, {m \choose 3})$ is 4-intervals nice by Lemma \ref{Corollary:Lemma7}. \qed
\end{proof}

\begin{table}[!htp]
{\small
	\centering
	\caption{Case $m =1$, after the first question and case $m =4$, after the first four questions\label{tab:m4}\label{tab:m1}}
		\begin{adjustbox}{max width=\textwidth}
			\begin{tabular}{cccccc}%
				\toprule
				&{State}   		  & {Question} & {Implementation}  & Yes-State & No-State \\
				\otoprule
				$m = 1$ &$(1,1,0,0)$ & $[1,0,0,0]?$ & Straightforward  & $(1,0,1,0)$ & $(0,2,0,0)$\\
				&$(1,0,1,0)$ & $[1,0,0,0]?$ & Straightforward   & $(1,0,0,1)$\textsuperscript{a} & $(0,1,1,0)$\\
					&$(0,2,0,0)$ & $[0,1,0,0]?$ & Straightforward & $(0,1,1,0)$ & $(0,1,1,0)$\\
					&$(0,1,1,0)$ & $[0,1,0,0]?$ & Straightforward  & $(0,1,0,1)$\textsuperscript{a} & $(0,0,2,0)$\textsuperscript{b}\\
				\cmidrule(lr){1-6}
				$m = 4$ & $(1,4,6,4)$ & $[1,1,3,2]?$  & Lemma \ref{th:specialized-YES1}  & $(1,1,6,5)$ & $(0,4,4,5)$\\
				&$(1,1,6,5)$ & $[1,0,2,3]?$  & Lemma \ref{th:specialized-YES2}  & $(1,0,3,7)$\textsuperscript{c} & $(0,2,4,4)$\\
				&$(0,4,4,5)$ & $[0,2,2,3]?$  & Lemma \ref{th:specialized-guzicki} & $(0,2,4,5)$ & $(0,2,4,4)$\\
				&$(0,2,4,4)$ & $[0,1,2,2]?$  & Lemma \ref{th:specialized-guzicki} & $(0,1,3,4)$ & $(0,1,3,4)$\\
				&$(0,2,4,5)$ & $[0,1,2,3]?$  & Lemma \ref{th:specialized-guzicki} & $(0,1,3,5)$ & $(0,1,3,4)$\\
				&$(0,1,3,4)$ & $[0,1,0,4]?$  & Lemma \ref{th:specialized-guzicki} & $(0,1,0,7)$\textsuperscript{a} & $(0,0,4,0)$\textsuperscript{b}\\
				&$(0,1,3,5)$ & $[0,1,0,5]?$  & Lemma \ref{th:specialized-guzicki} & $(0,1,0,8)$\textsuperscript{a} & $(0,0,4,0)$\textsuperscript{b}\\
				
				\bottomrule
			\end{tabular}
		\end{adjustbox}
	\justify
	\small\textsuperscript{a} This state is 4-intervals nice by Lemma~\ref{corollary:1-mode} \\
	\small\textsuperscript{b}This state is nice by~\cite{Pel0} and since the state has less then four elements all the questions can be implemented using at most 4 intervals
}
\end{table}

\begin{table}[!htp]
		\centering
		\caption{Proof of Corollary \ref{corollary:theoremNS} for $6 \leq m \leq 32$ and proof of \cite[Lemma 6]{negro1992ulam}\label{tab:lemma6}}
		\begin{adjustbox}{max width=\textwidth}
			\begin{tabular}{rlllll}%
				\toprule
				{Case} & {State}   		  & {Question} & {Implementation} & Yes-State & No-State \\
				\otoprule
				$m=8$ 	& $(1,8,28,56)$ 	& $[1,4,10,22]?$  & Theorem \ref{th:generalized-questions} & $(1,4,14,40)$ & $(0,5,22,44)$\textsuperscript{a}\\
				&$(1,4,14,40)$ 		& $[1,1,5,36]?$  & Lemma \ref{th:specialized-YES1}& $(1,1,8,45)$ & $(0,4,10,9)$\textsuperscript{a}\\
				&$(1,1,8,45)$		& $[1,0,2,29]?$  & Lemma \ref{th:specialized-YES2}& $(1,0,3,35)$\textsuperscript{b} & $(0,2,6,18)$\textsuperscript{a}\\
				\cmidrule(lr){1-6}
				$m=12$ 	& $(1,12,66,220)$ 	& $[1,6,27,110]?$  & Theorem \ref{th:generalized-questions} & $(1,6,33,149)$ & $(0,7,45,137)$\textsuperscript{a}\\
				&$(1,6,33,149)$ 	& $[1,1,13,136]?$  & Lemma \ref{th:specialized-YES1} & $(1,1,18,156)$ & $(0,6,21,26)$\textsuperscript{a}\\
				&$(1,1,18,156)$ 	& $[1,0,2,120]?$  & Lemma \ref{th:specialized-YES2} & $(1,0,3,136)$\textsuperscript{b} & $(0,2,16,38)$\textsuperscript{a}\\
				\cmidrule(lr){1-6}
				$m=17$ 	& $(1,17,136,680)$ 	& $[1,8,60,373]?$  & Theorem \ref{th:generalized-questions} & $(1,8,69,449)$ & $(0,10,84,367)$\textsuperscript{a}\\
				&$(1,8,69,449)$ 	& $[1,1,30,344]?$  & Lemma \ref{th:specialized-YES1} & $(1,1,37,383)$ & $(0,2,35,69)$\textsuperscript{a}\\
				&$(1,1,37,383)$ 	& $[1,0,2,316]?$  & Lemma \ref{th:specialized-YES2} & $(1,0,3,351)$\textsuperscript{b} & $(0,2,35,69)$\textsuperscript{a}\\
				\cmidrule(lr){1-6}
				$m=23$ 	& $(1,23,253,1771)$ & $[1,11,115,946]?$  & Theorem \ref{th:generalized-questions} & $(1,11,127,1084)$ & $(0,13,149,940)$\textsuperscript{a}\\
				&$(1,11,127,1084)$ 	& $[1,1,58,767]?$  & Lemma \ref{th:specialized-YES1} & $(1,1,68,836)$ & $(0,11,70,375)$\textsuperscript{a}\\
				&$(1,1,68,836)$ 	& $[1,0,2,700]?$  & Lemma \ref{th:specialized-YES2} & $(1,0,3,766)$\textsuperscript{b} & $(0,2,66,138)$\textsuperscript{a}\\
				\cmidrule(lr){1-6}
				$m=32$ 	&$(1,32,496,4960)$ 	& $[1,16,232,2545]?$  & Theorem \ref{th:generalized-questions} & $(1,16,248,2809)$ & $(0,17,280,2647)$\textsuperscript{a}\\
				&$(1,16,248,2809)$ 	& $[1,1,116,1852]?$  & Lemma \ref{th:specialized-YES1} & $(1,1,131,1984)$ & $(0,16,133,1073)$\textsuperscript{a}\\
				&$(1,1,131,1984)$ 	& $[1,0,2,1635]?$  &Lemma \ref{th:specialized-YES2} & $(1,0,3,1764)$\textsuperscript{b} & $(0,2,129,351)$\textsuperscript{a}\\
				
				\bottomrule
			\end{tabular}
		\end{adjustbox}
	\justify
	\small\textsuperscript{a} This state is 4-intervals nice by Proposition~\ref{corollary:Lemma2} \\
	\small\textsuperscript{b} This state is 4-intervals nice by Lemma~\ref{lemma:lemma5} 
\end{table}

\bibliographystyle{elsarticle-num}

\end{document}